\newtheorem{theorem}{Theorem}[section]
\newtheorem{definition}[theorem]{Definition}
\newtheorem{proposition}[theorem]{Proposition}
\newtheorem{lemma}[theorem]{Lemma}
\newtheorem{corollary}[theorem]{Corollary}
\newtheorem{remark}[theorem]{Remark}
\numberwithin{equation}{section}
\newcommand{\N}{\mathbb{N}}
\newcommand{\Z}{\mathbb{Z}}
\newcommand{\R}{\mathbb{R}}
\newcommand{\C}{\mathbb{C}}
\newcommand{\Ebb}{\mathbb{E}}
\newcommand{\Lbb}{\mathbb{L}}
\newcommand{\Pbb}{\mathbb{P}}
\newcommand{\Bcal}{\mathcal{B}}
\newcommand{\Ecal}{\mathcal{E}}
\newcommand{\Fcal}{\mathcal{F}}
\newcommand{\Jcal}{\mathcal{J}}
\newcommand{\Lcal}{\mathcal{L}}
\newcommand{\Mcal}{\mathcal{M}}
\newcommand{\Pcal}{\mathcal{P}}
\newcommand{\norm}[2]{\left\| #1 \right\|_{#2}}
\newcommand{\dd}{\;{\rm d}}
\newcommand{\de}{{\rm d}}
\DeclareMathOperator{\Alt}{Alt}
\DeclareMathOperator{\Leb}{Leb}
\DeclareMathOperator{\Jac}{Jac}
\DeclareMathOperator{\Supp}{Supp}
\DeclareMathOperator{\Esup}{Esup}
\DeclareMathOperator{\Var}{Var}
\DeclareMathOperator{\mathm}{m}
\title{Probabilistic potential theory and induction of dynamical systems}
\author{Fran\c{c}oise P\`ene}
\address{Univ Brest, Universit\'e de Brest, IUF, Institut Universitaire de France, LMBA,
Laboratoire de Math\'ematiques de Bretagne Atlantique, UMR CNRS 6205,
6 avenue Le Gorgeu, 29238 Brest Cedex, France}
\email{francoise.pene@univ-brest.fr}
\author{Damien Thomine}
\address{D\'epartement de Math\'ematiques d'Orsay, Universit\'e Paris-Sud, 
UMR CNRS 8628, F-91405 Orsay Cedex, France}
\email{damien.thomine@math.u-psud.fr}
\date{September 2019, 9th}
\begin{document}

\begin{abstract}
In this article, we outline a version of a balayage formula in 
probabilistic potential theory adapted to measure-preserving dynamical systems. 
This balayage identity generalizes the property that induced maps 
preserve the restriction of the original invariant measure. 
As an application, we prove in some cases the invariance under 
induction of the Green-Kubo formula, as well as the invariance of 
a new degree $3$ invariant.
\end{abstract}

\maketitle

The central objects of the probabilistic theory of potential~\cite{Revuz:1975, Doob:1984} are the solutions of the Poisson equation:
\begin{equation*}
 (I-P) (f) 
 = g,
\end{equation*}
where $P$ is the transition kernel of a Markov chain and $g$ is fixed. Its solutions exhibit, 
in particular, an invariance under induction~\cite[Chapter~8.2]{Revuz:1975}. Given a subset $\Psi$ of the state space, if $P_\Psi$ is 
the transition kernel for the induced Markov chain, then one can deduce the solutions of 
the equation $(I-P_\Psi) f = g$ from those of the initial equation $(I-P) (f) = g$.
This invariance, in turn, is a powerful tool to compute $P_\Psi$, and from there hitting probabilities: 
if one is given a starting site and a number of targets, it is possible to compute the distribution 
of the first target hit by the Markov chain~\cite{Spitzer:1976}.

\smallskip

A number of physically or geometrically relevant dynamical systems, such as the Lorentz gas 
or the geodesic flow on abelian covers of hyperbolic manifolds, behave globally or locally 
like random walks. For instance, they satisfy global~\cite{Nagaev:1957} and local central limit theorems, invariance 
principles~\cite{Gouezel:2010}, large deviations~\cite{Young:1990}, etc. 
This raises the question of adapting the probabilistic potential theory to such systems. 
In a previous work~\cite{PeneThomine:2017}, the authors devised a method related to 
this theory to estimate the hitting probability of a single 
far away target for such systems. It relied on a stronger form of invariance under induction 
satisfied by Green-Kubo's bilinear form:
\begin{equation}
 \label{eq:GKIntro}
 \sigma^2_{GK} (A, \mathm, T; f,f) 
 := \int_A f^2 \dd \mathm + 2 \sum_{n \geq 1} \int_A f \cdot f\circ T^n \dd \mathm\, ,
\end{equation}
which appear is the limiting variance in the central limit theorem. 
While the method used in~\cite{PeneThomine:2017} does not extend to a larger 
number of targets, it suggests the possibility of applying potential theory to dynamical systems.

\smallskip

In this article, we show how to adapt the invariance under induction of the Poisson equation 
to general recurrent measure-preserving dynamical systems. Let $(A, \mathm, T)$ be a measure-preserving 
recurrent dynamical system, with $\Lcal$ the transfer operator associated to $(A,\mathm,T)$. 
Given $B\subset A$ and $T_B$ the first return map of $T$ to $B$, the system $(B, \mathm_{|B}, T_B)$ 
is recurrent; let $\Lcal_B$ be the associated transfer operator. 
Our first result shall be:

\begin{proposition}
\label{prop:InvarianceInduction}

Let $(A, \mathm, T)$ be measure-preserving, with $\mathm$ a recurrent $\sigma$-finite measure. 
Let $B \subset A$ be such that $0<\mathm(B) \leq +\infty$.

\smallskip

Let $p \in [1, \infty]$ and $f$, $g \in \Lbb^p (A, \mathm)$ be such that $g \equiv 0$ on $B^c$ and:
\begin{equation}
\label{eq:PoissonOriginal}
(I-\Lcal) f = g.
\end{equation}
Then:
\begin{equation}
\label{eq:PoissonInduit}
(I-\Lcal_B) f_{|B} = g_{|B}.
\end{equation}
\end{proposition}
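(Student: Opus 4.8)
\emph{Strategy.} The plan is to run the argument through the first-return decomposition of the induced transfer operator. Write $\mathbf{1}_B$, $\mathbf{1}_{B^c}$ both for indicator functions and for the associated multiplication (restriction) operators, and set $Q:=\mathbf{1}_{B^c}\Lcal$. Let $\varphi_B(x):=\inf\{n\geq1:\ T^nx\in B\}$; recurrence of $\mathm$ gives $\varphi_B<+\infty$ $\mathm$-a.e.\ on $B$, and $T_B=T^{\varphi_B}$. The first task in the write-up is to establish the classical identity
\[
\Lcal_B h\;=\;\sum_{n\geq1}\mathbf{1}_B\,\Lcal^n\!\bigl(\mathbf{1}_{\{\varphi_B=n\}}h\bigr)\;=\;\mathbf{1}_B\,\Lcal\,(I-Q)^{-1}\mathbf{1}_B h,\qquad h\in\Lbb^p(B,\mathm_{|B}),
\]
where $(I-Q)^{-1}:=\sum_{k\geq0}Q^k$ acts on functions supported in $B$ (the series converges $\mathm$-a.e., again by recurrence). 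I would prove it by testing against $h'\in\Lbb^\infty(B)$: split $\int_Bh\cdot h'\circ T_B\dd\mathm$ according to $\varphi_B$ and use $\int\Lcal^nu\cdot h'\dd\mathm=\int u\cdot h'\circ T^n\dd\mathm$; the equality of the two expressions follows from $\Lcal((u\circ T)v)=u\,\Lcal v$ and $\mathbf{1}_{B^c}\circ T^n\cdot\mathbf{1}_B\circ T^n=0$.

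\emph{Splitting and recombining.} Multiplying~\eqref{eq:PoissonOriginal} by $\mathbf{1}_{B^c}$ and using $g\equiv0$ on $B^c$ together with $\Lcal f=\Lcal(\mathbf{1}_Bf)+\Lcal(\mathbf{1}_{B^c}f)$ gives $(I-Q)(\mathbf{1}_{B^c}f)=Q(\mathbf{1}_Bf)$, whence $\mathbf{1}_{B^c}f=(I-Q)^{-1}Q(\mathbf{1}_Bf)=\sum_{k\geq1}Q^k(\mathbf{1}_Bf)$. Multiplying~\eqref{eq:PoissonOriginal} by $\mathbf{1}_B$ gives $\mathbf{1}_Bf-\mathbf{1}_B\Lcal f=g_{|B}$, so it suffices to check $\mathbf{1}_B\Lcal f=\Lcal_B(f_{|B})$. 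Writing $f=\mathbf{1}_Bf+\mathbf{1}_{B^c}f$ and using linearity,
\[
\mathbf{1}_B\Lcal f=\mathbf{1}_B\Lcal\bigl(I+(I-Q)^{-1}Q\bigr)(\mathbf{1}_Bf)=\mathbf{1}_B\Lcal\,(I-Q)^{-1}(\mathbf{1}_Bf)=\Lcal_B(f_{|B}),
\]
where the middle step uses $I+(I-Q)^{-1}Q=(I-Q)^{-1}\bigl((I-Q)+Q\bigr)=(I-Q)^{-1}$ on functions supported in $B$, and the last step is the first-return formula. This gives~\eqref{eq:PoissonInduit}.

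\emph{Main obstacle.} The delicate point is the inversion $\mathbf{1}_{B^c}f=(I-Q)^{-1}Q(\mathbf{1}_Bf)$: iterating $\mathbf{1}_{B^c}f=Q(\mathbf{1}_Bf)+Q(\mathbf{1}_{B^c}f)$ leaves after $N$ steps a remainder $Q^N(\mathbf{1}_{B^c}f)$ that must be shown to vanish. Reducing to $f\geq0$, this remainder is nonincreasing in $N$, so it converges $\mathm$-a.e.\ to some $r_\infty\geq0$ with $f=\sum_{k\geq0}Q^k(\mathbf{1}_Bf)+r_\infty$; applying $\mathbf{1}_B\Lcal$ and exchanging it with the increasing series by monotone continuity yields $\mathbf{1}_B\Lcal f=\Lcal_B(f_{|B})+\mathbf{1}_B\Lcal r_\infty$, so one must still see $\mathbf{1}_B\Lcal r_\infty=0$. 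Here recurrence enters decisively: from $\int Q^Nh\dd\mathm=\int h\cdot\prod_{j=1}^N\mathbf{1}_{B^c}\circ T^j\dd\mathm$ and the fact that $\mathm(B\cap D)=0$ for $D:=\{x:\ T^nx\notin B\ \forall n\geq1\}$ (whence $D=T^{-1}D$ mod $\mathm$), one checks that $r_\infty$ is carried by $D$, on which $\mathbf{1}_B\circ T\equiv0$, so $\mathbf{1}_B\Lcal r_\infty=0$. This disposal of the remainder --- immediate when $\mathm(A)<+\infty$ or $f\in\Lbb^1$, but requiring the positive-operator/monotone-convergence argument just sketched in the general $\sigma$-finite setting --- is the heart of the proof and the only place where recurrence of $\mathm$ is genuinely needed.
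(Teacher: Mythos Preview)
Your operator-theoretic framing is equivalent to the paper's duality argument: both hinge on the first-return decomposition of $\Lcal_B$, and your remainder $\mathbf{1}_B\Lcal Q^N(\mathbf{1}_{B^c}f)$ dualizes exactly to the paper's term $\int_{C_{N+1}} f\cdot h\circ T^{N+1}\dd\mathm$. You also correctly identify the vanishing of this remainder as the heart of the proof.

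However, your disposal of the remainder has a gap. The claim that $r_\infty$ is ``carried by $D$'' is not justified: the set $D=\{x:T^nx\notin B\ \forall n\ge1\}$ is defined via the \emph{forward} $T$-orbit, whereas $r_\infty=\lim_N Q^N(\mathbf{1}_{B^c}f)$ with $Q=\mathbf{1}_{B^c}\Lcal$ is governed by the \emph{backward} (transfer) dynamics, so there is no reason its pointwise support should lie in $D$. What one can extract is that $r_\infty\ge0$ is $Q$-invariant, hence $\Lcal r_\infty\ge r_\infty$; when $r_\infty\in\Lbb^1$ (so in particular when $p<\infty$, or $\mathm(A)<\infty$), integrating forces $\Lcal r_\infty=r_\infty$ and thus $\mathbf{1}_B\Lcal r_\infty=\mathbf{1}_B r_\infty=0$, matching your parenthetical remark. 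But for $p=\infty$ with $\mathm$ infinite this integral argument collapses, and your sketch does not close the gap. The paper handles precisely this case by a sub-induction argument on finite-measure sets $\widetilde B\subset B$, establishing that $\int_B h\circ T_B\dd\mathm=\int_B h\dd\mathm$ for every nonnegative $h\in\Lbb^1$; this amounts to proving recurrence of the \emph{dual} chain with kernel $\Lcal$ on $B$. That is exactly the missing ingredient: recurrence of $(A,\mathm,T)$ controls forward returns to $B$, but killing $r_\infty$ requires backward returns, and one must argue for those separately.

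A smaller point: your ``reducing to $f\ge0$'' step deserves a word. For $p=\infty$ it works by writing $f=(f+\norm{f}{\infty}\mathbf{1}_A)-\norm{f}{\infty}\mathbf{1}_A$, both summands being nonnegative solutions of Poisson equations with right-hand side supported on $B$; for $p<\infty$ with $\mathm$ infinite one cannot add constants, and the positive/negative parts $f_\pm$ do not individually satisfy Poisson equations, so the reduction is not available there.
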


This statement can be seen either as a generalization of the fact that, under these hypotheses, 
$(B, \mathm_{|B}, T_B)$ is measure-preserving, or as an 
application of a classical result from potential theory~\cite[Corollary~1.11]{Revuz:1975} 
to the Markov kernel $\Lcal$.

\smallskip

Proposition~\ref{prop:InvarianceInduction} can be used to explain the invariance by induction of Greenk-Kubo's formula~\eqref{eq:GKIntro}, 
which was noticed and leveraged in~\cite{PeneThomine:2017}, as well as the existence of higher-order invariants.

\smallskip

In the first part of this article, we present a self-contained proof of Proposition~\ref{prop:InvarianceInduction}, 
and investigate some general properties of the Poisson equation such as existence and uniqueness of its solutions, 
and adaptations to more general, dynamically relevant operators or functions. 
In Section~\ref{sec:DynamiqueMarkov} we make explicit the relationship between Proposition~\ref{prop:InvarianceInduction} and 
the known results in the theory of Markov chains, and how one can go from one setting 
to the other. The invariance under induction of Green-Kubo's formula and its relation 
with the properties of the solutions of the Poisson equation are discussed in Section~\ref{sec:GreenKubo}. 
The insights gained are applied in Section~\ref{sec:Degre3} to study a degree $3$ invariant. 
Finally, Section~\ref{sec:Distributions} discusses the distributional point of view 
on Green-Kubo's formula.

\section{Induction invariance and the transfer operator}
\label{sec:InvarianceInduction}

\subsection{General case}
\label{subsec:CasGeneral}

Let $A$ be a Polish space, $T : A \to A$ be measurable, and $\mathm$ a 
$\sigma$-finite\footnote{In this article, the space of $\sigma$-finite measures shall always include the space of finite measures.} 
$T$-invariant measure on $A$. Assume that $(A, \mathm, T)$ 
is recurrent. Let $B \subset A$ be measurable, with $0 < \mathm (B) < + \infty$. 
Let $\varphi_B:A\rightarrow \mathbb N^*\cup\{\infty\}$ be the first hitting time of $B$, defined by :
\begin{equation*}
\varphi_B (x) 
:= \inf \{n \geq 1: \ T^n (x) \in B\}.
\end{equation*}
As the system $(A, \mathm, T)$ is recurrent, $\varphi_B < + \infty$ almost everywhere on $B$. 
The induced transformation on $B$ is then defined as:
\begin{equation*}
T_B : \left\{
\begin{array}{lll}
B & \to & B \\
x & \mapsto & T^{\varphi_B(x)} (x)
\end{array}
\right. .
\end{equation*}
By the previous remark, $T_B$ is well-defined $\mathm$-almost everywhere on $B$.

\smallskip

The transfer operators $\Lcal$, $\Lcal_B$ on $\Lbb^p$ are defined as the duals of the Koopman (composition) operator on $\Lbb^q$, 
where $q$ is the conjugate of $p$ (i.e. $q$ is such that $\frac 1p+\frac 1q=1$):
\begin{align*}
\int_A \Lcal (f) \cdot g \dd \mathm 
& = \int_A f \cdot g \circ T \dd \mathm \quad \forall f \in \Lbb^p (A, \mathm), \quad \forall g \in \Lbb^q (A, \mathm), \\
\int_B \Lcal_B (f) \cdot g \dd \mathm 
& = \int_B f \cdot g \circ T_B \dd \mathm \quad \forall f \in \Lbb^p (B, \mathm), \quad \forall g \in \Lbb^q (B, \mathm).
\end{align*}
The Koopman operator acts by isometry on each $\Lbb^q$, so the transfer operators are weak contractions on each $\Lbb^p$. 
Let us turn to the proof of Proposition~\ref{prop:InvarianceInduction}.

\smallskip

A classical result in ergodic theory states that, given a measure-preserving recurrent dynamical system $(A, \mathm, T)$ 
and $B$ such that $0< \mathm(B) < +\infty$, the measure $\mathm_{|B}$ is $T_B$-invariant ~\cite[Proposition~1.5.3]{Aaronson:1997}. 
Proposition~\ref{prop:InvarianceInduction} generalises this result; see Corollary~\ref{cor:InvarianceMesure} for more details. 
Its proof follows the same line of thought, with more bookkeeping. 

\smallskip

In the proof of Proposition~\ref{prop:InvarianceInduction}, we also expend some effort to deal with the case $\mathm (B) = + \infty$. This is done by reinducing on 
an arbitrary $\widetilde{B} \subset B$ whose measure is finite, which shows that mass can't escape from $B$. 
The proof can be significantly simplified if one is only interested in the finite measure case, for instance 
for pedagogical purposes.

\begin{proof}[Proof of Proposition~\ref{prop:InvarianceInduction}]

Let $f$, $g$ be as in the proposition. 
To simplify notations, we write simply $\varphi$ for $\varphi_B$.
For all $n \geq 1$, let $A_n := \{\varphi = n\}$, 
then let $B_n := A_n \cap B$ and $C_n := A_n \cap B^c$. Denote by $q$ the conjugate of $p$.

\medskip
\textbf{\textsc{Recursive formula}}
\smallskip

We claim that, for all $h \in \Lbb^q (A, \mathm)$, for all $n \geq 1$,
\begin{align}
\int_B f \cdot h \dd \mathm 
& = \int_B g \cdot h \dd \mathm + \sum_{k=1}^n \int_{B_k} f \cdot h \circ T^k \dd \mathm  + \int_{C_n} f \cdot h \circ T^n \dd \mathm \label{eq:InvarianceRecurrence} \\
& = \int_B g \cdot h \dd \mathm + \int_B \mathbf{1}_{\{\varphi \leq n\}} f \cdot h \circ T^k \dd \mathm  + \int_{C_n} f \cdot h \circ T^n \dd \mathm .\, \nonumber
\end{align}
Indeed, 
\begin{align*}
\int_B f \cdot h \dd \mathm 
& = \int_B [g+\Lcal(f)] \cdot h \dd \mathm \\
& = \int_B g \cdot h \dd \mathm + \int_A \Lcal(f) \cdot \mathbf{1}_B h \dd \mathm \\
& = \int_B g \cdot h \dd \mathm + \int_A f \cdot (\mathbf{1}_B h) \circ T \dd \mathm \\
& = \int_B g \cdot h \dd \mathm + \int_{A_1} f \cdot h \circ T \dd \mathm \\
& = \int_B g \cdot h \dd \mathm + \int_{B_1} f \cdot h \circ T \dd \mathm + \int_{C_1} f \cdot h \circ T \dd \mathm,
\end{align*}
which is the induction basis. Now, assume that Equation~\eqref{eq:InvarianceRecurrence} holds for some $n \geq 1$.
Then, since $g\mathbf{1}_{C_n}=0$:
\begin{align}
\int_{C_n} f \cdot h \circ T^n \dd \mathm 
& = \int_A \Lcal(f) \cdot \mathbf{1}_{C_n} \cdot h \circ T^n \dd \mathm \nonumber\\
& = \int_A f \cdot \mathbf{1}_{C_n} \circ T \cdot h \circ T^{n+1} \dd \mathm \nonumber\\
& = \int_{A_{n+1}} f \cdot h \circ T^{n+1} \dd \mathm \nonumber\\
& = \int_{B_{n+1}} f \cdot h \circ T^{n+1} \dd \mathm + \int_{C_{n+1}} f \cdot h \circ T^{n+1} \dd \mathm\, , \label{AAA}
\end{align}
which is the induction step. Hence, Equation~\eqref{eq:InvarianceRecurrence} holds for all $n \geq 1$.

\medskip
\textbf{\textsc{Convergence for $p = \infty$}}
\smallskip

Assume that $\mathm (B) < +\infty$. Note that $f = \mathbf{1}_A$, $g = \mathbf{0}$ is a solution of 
Equation~\eqref{eq:PoissonOriginal}. Taking $h=\mathbf{1}_B$ in Equation~\eqref{eq:InvarianceRecurrence} 
yields:
\begin{equation*}
\mathm(B)
= \sum_{k=1}^n \mathm (B_k) + \mathm(C_n),
\end{equation*}
so that $\mathm (C_n) = \mathm (B \cap \{\varphi > n\})$. Since $\varphi < + \infty$ almost everywhere on $B$, 
we get that $\lim_{n \to + \infty} \mathm (C_n) = 0$.

\smallskip

Let $f,g\in \Lbb^\infty(A,\mathm)$ with $g\equiv 0$ on $B^c$ and $(I-\Lcal)f=g$. Given $h \in \Lbb^1 (A, \mathm)$, write $\omega_h (\varepsilon) := \sup \{\int_E |h| \dd \mathm: \ \mathm (E) \leq \varepsilon\}$. 
Since $h$ is integrable, $\lim_0 \omega_h = 0$, and:
\begin{equation*}
\limsup_{n \to + \infty} \left| \int_{C_n} f \cdot h \circ T^n \dd \mathm \right| 
\leq \norm{f}{\Lbb^\infty (A, \mathm)} \limsup_{n \to + \infty} \omega_h (\mathm (C_n)) 
= 0,
\end{equation*}
so that, taking the limit in Equation~\eqref{eq:InvarianceRecurrence}, 
\begin{equation}
\label{eq:PoissonLimiteIntermediaire}
\int_B f \cdot h \dd \mathm 
= \int_B g \cdot h \dd \mathm + \lim_{n \to + \infty} \int_B \mathbf{1}_{\{\varphi \leq n\}}  f \cdot h \circ T_B \dd \mathm.
\end{equation}
Applying Equation~\eqref{eq:PoissonLimiteIntermediaire} to $g:=0$, $\tilde{f}:=\norm{f}{\Lbb^\infty (A, \mu)}$ and 
$\tilde{h} := |h|$, by monotone convergence, 
\begin{equation*}
 \lim_{n \to + \infty} \int_B \mathbf{1}_{\{\varphi \leq n\}}  \norm{f}{\Lbb^\infty (A, \mu)} \cdot |h| \circ T_B \dd \mathm 
 = \int_B  \norm{f}{\Lbb^\infty (A, \mu)} \cdot |h| \circ T_B \dd \mathm.
\end{equation*}
The general case follows by the dominated convergence theorem:
\begin{equation}
\label{eq:PreservationMesureFinie}
\int_B f \cdot h \dd \mathm 
= \int_B g \cdot h \dd \mathm + \int_B f \cdot h \circ T_B \dd \mathm.
\end{equation}

This concludes the case $p = \infty$ and $0 < \mathm (B) < +\infty$. 
Now, assume that $m(B)=+\infty$. Let $h \in \Lbb^1 (A, \mathm)$ be non-negative. 
Equation~\eqref{eq:InvarianceRecurrence} applied with $\mathbf{1}_A$ and $\mathbf 0$ implies, for all $n \geq 1$:
\begin{equation*}
\int_B h \dd \mathm 
\geq \sum_{k=1}^n \int_{B_k} h \circ T^k \dd \mathm,
\end{equation*}
whence, by taking the limit, we only get $\int_B h \dd \mathm \geq \int_B h \circ T_B \dd \mathm$. 
We want to show that mass can't escape from $B$ when working backwards\footnote{Following~\cite{Revuz:1975}, 
it may be more intuitive to think about the Markov chain dual to $T$, that is, the Markov chain with 
transition kernel $\Lcal$ on $A$. Then one can talk about forward orbits instead of backward orbits, 
and the goal is to prove that this dual Markov chain comes back almost surely to $B$ -- or, in other words, is recurrent.}. 
To prove this, starting from $\widetilde{B} \subset B$ with finite measure, 
we go backwards in time until we reach $B$, then work backwards again until we reach $\widetilde{B}$. 
Since we must eventually end up in $\widetilde{B}$, we must also eventually reach $B$.

\smallskip

Let $\varepsilon > 0$, and $\widetilde{B} \subset B$ be such that $\int_{B \setminus \widetilde{B}} h \dd \mathm \leq \varepsilon$ and 
$0 < \mathm (\widetilde{B}) < +\infty$. Define $\widetilde{B}_n$ as in the beginning of this proof, 
replacing $B$ by $\widetilde{B}$. Fix $n \geq 1$. For $1 \leq k \leq \ell \leq n$, let $B_{k, \ell} := T^{-(\ell-k)} (B_k) \cap \widetilde{B}_\ell$. 
Then:
\begin{align*}
\int_{B_k} h \circ T^k \dd \mathm 
& = \int_{B_{k, k}} h \circ T^k \dd \mathm + \int_{B_k \setminus B_{k, k}} h \circ T^k \dd \mathm \\
& = \int_{B_{k, k}} h \circ T^k \dd \mathm + \int_{T^{-1} B_k \setminus T^{-1} B_{k, k}} h \circ T^{k+1} \dd \mathm \\ 
& = \int_{B_{k, k}} h \circ T^k \dd \mathm + \int_{B_{k, k+1}} h \circ T^{k+1} \dd \mathm + \int_{T^{-1} B_k \setminus (T^{-1} B_{k, k} \cup B_{k,k+1})} h \circ T^{k+1} \dd \mathm \\ 
& = \ldots \\
& = \sum_{\ell = k}^n \int_{B_{k, \ell}} h \circ T^\ell \dd \mathm + \int_{T^{-(n-k)} B_k \setminus \bigcup_{\ell=k}^n T^{-(n-\ell)} B_{k, \ell}} h \circ T^n \dd \mathm \\
& \geq \sum_{\ell = k}^n \int_{B_{k, \ell}} h \circ T^\ell \dd \mathm.
\end{align*}
Since $\widetilde{B} \subset B$, for each $\ell$, we have $\widetilde{B}_\ell = \bigsqcup_{k=1}^\ell B_{k, \ell}$. 
In other words, the set of orbits starting from $\widetilde{B}$ at time $0$ and ending in 
$B$ at time $n$ can be partitioned depending on the last time $n-k<n$ 
at which they hit $B$. Hence,
\begin{equation*}
\sum_{k=1}^n \int_{B_k} h \circ T^k \dd \mathm
\geq \sum_{\ell=1}^n \int_{\widetilde{B}_\ell} h \circ T^\ell \dd \mathm.
\end{equation*}
By the monotonous convergence theorem, the two terms in the above
inequality converge respectively to $\int_{B} h \circ T_{B} \dd \mathm$
and to $\int_{\widetilde{B}} h \circ T_{\widetilde{B}} \dd \mathm = \int_{\widetilde{B}} h \dd \mathm$
(thanks to the finite measure case, Equation~\eqref{eq:PreservationMesureFinie}).
Hence, due to \eqref{eq:PreservationMesureFinie} (applied with $f=\mathbf{1}$ and $g=\mathbf 0$):
\begin{equation*}
\int_B h \dd \mathm 
\geq \int_B h \circ T_B \dd \mathm 
\geq \int_{\widetilde B} h \dd \mathm 
\geq \int_B h \dd \mathm -\varepsilon;
\end{equation*}
as this is true for all $\varepsilon > 0$, we get $\int_B h \dd \mathm = \int_B h \circ T_B \dd \mathm$. 
As a consequence, for all $h \in \Lbb^1 (A, \mathm)$, 
\begin{align*}
\limsup_{n \to + \infty} \left| \int_{C_n} f \cdot h \circ T^n \dd \mathm \right|
& \leq \norm{f}{\Lbb^\infty (A, \mathm)} \limsup_{n \to + \infty} \int_{C_n} |h| \circ T^n \dd \mathm \\
& = \norm{f}{\Lbb^\infty (A, \mathm)} \left[  \int_B |h| \dd \mathm - \int_B |h| \circ T_B \dd \mathm \right] 
= 0,
\end{align*}
which, due to \eqref{eq:PreservationMesureFinie}, yields:
\begin{align*}
\int_B f \cdot h \dd \mathm 
& = \int_B g \cdot h \dd \mathm + \int_B f \cdot h \circ T_B \dd \mathm \\
& = \int_B \left[g + \Lcal_B (f_{|B})\right] \cdot h \dd \mathm.
\end{align*}
As this is true for all $h \in \Lbb^1 (A, \mathm)$, we finally get Equation~\eqref{eq:PoissonInduit}.

\medskip
\textbf{\textsc{Convergence for $p < \infty$}}
\smallskip

Assume that $f \in \Lbb^p (A, \mathm)$ and let $h \in \Lbb^q (A, \mathm)$. Then:
\begin{equation*}
\left| \int_{C_n} f \cdot h \circ T^n \dd \mathm \right| 
\leq \norm{f \mathbf{1}_{C_n}}{\Lbb^p (A, \mathm)} \norm{h}{\Lbb^q (A, \mathm)}.
\end{equation*}
By definition, $\norm{f \mathbf{1}_{C_n}}{\Lbb^p (A, \mathm)}^p = \int_{C_n} |f|^p \dd \mathm$. 
Since $|f|^p$ is integrable and $\lim_{n\rightarrow +\infty} \mathm(C_n)=0$, by the Lebesgue dominated
convergence theorem, $\lim_{n \to + \infty} \norm{f \mathbf{1}_{C_n}}{\Lbb^p (A, \mathm)} = 0$,
and so that the above quantity converges to $0$ as $n$ goes to infinity. Moreover 
\begin{equation*}
 \sum_{k=1}^n \int_{B_k} f \cdot h \circ T^k \dd \mathm
 = \int_B \mathbf{1}_{\{\varphi \leq n\}} f \cdot h \circ T_B \dd \mathm\, .
\end{equation*}
Due to the case $p=\infty$ with $g = \mathbf{0}$ and $f = \mathbf{1}_A$, we know that $T_B$ preserves $\mathm_{|B}$ 
(see also Corollary~\ref{cor:InvarianceMesure}). Hence, 
$h\circ T_B$ is in $\Lbb^q (M,\mathm_{|B})$, with $\norm{h \circ T_B}{\Lbb^q(B,\mathm_{|B})} = \norm{h}{\Lbb^q(B,\mathm_{|B})}$. 
From there, we know that $f\cdot (h\circ T_B)$ is integrable with respect to $\mathm_{|B}$. 
Therefore, by the dominated convergence theorem, the above sum converges to 
$\int_B f\cdot h\circ T_B\dd\mathm$ and so, due to Equation~\eqref{eq:InvarianceRecurrence},
\begin{equation*}
 \int_B f\cdot h\dd\mathm
 = \int_B g\cdot h\dd\mathm + \int_B f\cdot h\circ T_B\dd\mathm\, .
\end{equation*}
This equation can be rewritten as:
\begin{equation*}
 \int_B (I-\Lcal_B)f_{|B}\cdot h\dd\mathm
 = \int_B g\cdot h\dd\mathm \quad \forall h\in \Lbb^q(m_B) \,,
\end{equation*}
and thus $(I-\Lcal_B) f_{|B} = g_{|B}$ in $\Lbb^p(m_B)$.
\end{proof}

As a corollary to the $p = \infty$ case, we do get the classical result that 
$(B, \mathm_{|B}, T_B)$ is measure-preserving:

\begin{corollary}
\label{cor:InvarianceMesure}

Let $(A, \mathm, T)$ be measure-preserving, with $\mathm$ a recurrent $\sigma$-finite measure. 
Let $B \subset A$ be such that $\mathm(B) > 0$. Then $(B, \mathm_{|B}, T_B)$ is measure-preserving.
\end{corollary}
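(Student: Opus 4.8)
The plan is to derive Corollary~\ref{cor:InvarianceMesure} directly from Proposition~\ref{prop:InvarianceInduction}. First I would dispose of a trivial preliminary: the statement of the corollary allows $\mathm(B) = +\infty$, so I would reduce to the case $0 < \mathm(B) \leq +\infty$ by noting that, because $\mathm$ is $\sigma$-finite, it suffices to check that $T_B$ preserves $\mathm_{|B}$ tested against a generating family of integrable functions; but in fact the cleaner route is to observe that Proposition~\ref{prop:InvarianceInduction} is stated precisely for $0 < \mathm(B) \leq +\infty$, so no reduction is needed and I would simply invoke it.

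The key step is the choice of test pair $(f,g)$. I would take $f = \mathbf{1}_A$ and $g = \mathbf{0}$. Then $g \equiv 0$ on $B^c$ trivially, and $f, g \in \Lbb^\infty(A,\mathm)$. Moreover $(I-\Lcal) f = g$ holds: indeed $\Lcal(\mathbf{1}_A) = \mathbf{1}_A$, because for every $h \in \Lbb^1(A,\mathm)$ one has $\int_A \Lcal(\mathbf{1}_A)\cdot h \dd\mathm = \int_A \mathbf{1}_A\cdot h\circ T \dd\mathm = \int_A h\circ T \dd\mathm = \int_A h \dd\mathm$ by $T$-invariance of $\mathm$, so $\Lcal(\mathbf{1}_A) = \mathbf{1}_A$ as elements of $\Lbb^\infty$. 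Hence Proposition~\ref{prop:InvarianceInduction} (with $p = \infty$) applies and yields $(I-\Lcal_B) f_{|B} = g_{|B}$, i.e. $(I-\Lcal_B)\mathbf{1}_B = \mathbf{0}$, that is $\Lcal_B(\mathbf{1}_B) = \mathbf{1}_B$.

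Finally I would translate $\Lcal_B(\mathbf{1}_B) = \mathbf{1}_B$ back into invariance of the measure. For any $g \in \Lbb^1(B,\mathm_{|B})$,
\begin{equation*}
\int_B g\circ T_B \dd\mathm = \int_B \mathbf{1}_B \cdot g\circ T_B \dd\mathm = \int_B \Lcal_B(\mathbf{1}_B)\cdot g \dd\mathm = \int_B \mathbf{1}_B\cdot g \dd\mathm = \int_B g \dd\mathm,
\end{equation*}
by the defining duality of $\Lcal_B$. Taking $g = \mathbf{1}_E$ for measurable $E \subset B$ of finite measure gives $\mathm(T_B^{-1}E \cap B) = \mathm(E)$, and by $\sigma$-finiteness this extends to all measurable $E \subset B$. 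Hence $\mathm_{|B}$ is $T_B$-invariant, so $(B, \mathm_{|B}, T_B)$ is measure-preserving.

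I do not expect any real obstacle here: the corollary is essentially a pointer showing that the proposition contains the classical fact. The only point requiring a modicum of care is the justification that $\Lcal(\mathbf{1}_A) = \mathbf{1}_A$ when $\mathm$ is infinite — one must test against $\Lbb^1$ rather than against constants — and the passage from functional invariance $\Lcal_B(\mathbf{1}_B)=\mathbf{1}_B$ to set-theoretic invariance of $\mathm_{|B}$, which again uses $\sigma$-finiteness to go from finite-measure test sets to arbitrary ones.
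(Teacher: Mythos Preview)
Your proposal is correct and follows exactly the paper's approach: apply Proposition~\ref{prop:InvarianceInduction} with $f=\mathbf{1}_A$, $g=\mathbf{0}$, $p=\infty$ to obtain $\Lcal_B(\mathbf{1}_B)=\mathbf{1}_B$, which is equivalent to $T_B$-invariance of $\mathm_{|B}$. Your write-up simply spells out the two verifications (that $\Lcal(\mathbf{1}_A)=\mathbf{1}_A$ and that $\Lcal_B(\mathbf{1}_B)=\mathbf{1}_B$ encodes measure preservation) which the paper leaves implicit.
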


\begin{proof}

Take again $g = \mathbf{0}$ and $f = \mathbf{1}_A$ in Proposition~\ref{prop:InvarianceInduction}. These 
two functions satisfy Equation~\eqref{eq:PoissonOriginal}, so:
\begin{equation*}
\mathbf{1}_B = \Lcal_B (\mathbf{1}_B),
\end{equation*}
which means that $\mathm_{|B}$ is $T_B$-invariant.
\end{proof}

\begin{remark}[Coboundaries]
\label{rmk:Cobords}

Let $(A, \mathm, T)$ be measure-preserving, with $\mathm$ a recurrent $\sigma$-finite measure. 
Let $B \subset A$ be such that $\mathm(B) > 0$. Assume that $T$ is invertible. Then the 
Koopman operator for $T$ is the transfer operator for the transformation $T^{-1}$. 
Proposition~\ref{prop:InvarianceInduction} implies that for $f$, $g \in \Lbb^p (A, \mathm)$ 
with $g$ supported on $B$, if $f-f\circ T = g$, then $f_{|B}-f_{|B} \circ T_B = g_{|B}$.

\smallskip

This proposition is actually true without any assumption on the integrability of $f$ and $g$, 
as well as for non-invertible $T$ (as it can be seen by a direct proof).

\smallskip

One of the strengths of Proposition~\ref{prop:InvarianceInduction} is that, for hyperbolic 
transformations, regular functions $g$ such that $f-f\circ T = g$ for some $f$ may be rare. 
For instance, as a consequence of Liv\v{s}ic's theorem~\cite{Livsic:1971}, a H\"older observable on an Anosov system is a coboundary 
if and only if its average on all periodic orbits vanishes, which brings countably many obstructions. 
On the other hand, for nice ergodic, non-invertible, hyperbolic systems, the transfer operator 
$\Lcal$ acts on spaces of regular functions (for instance H\"older functions). Then $1$ is a 
simple eingenvalue isolated in the spectrum of $\Lcal$, so the only obstruction to the existence of a function $f$ 
such that $f-\Lcal (f) = g$ is that the global average of $g$ must vanish.
\end{remark}

\begin{remark}[Recurrence and first return time]
\label{rmk:Recurrence}

Let $(A, \mathm, T)$ be measure-preserving, and let $B \subset A$ with $\mathm (B) > 0$. 
The hypothesis of recurrence in Proposition~\ref{prop:InvarianceInduction} can \textit{a priori} 
be weakened. A second look at its proof shows that the conclusion of the proposition holds:
\begin{itemize}
 \item if $\mathm (B) < + \infty$ and $\varphi_{B|B}<+\infty$ almost everywhere;
 \item if $\mathm (B) = + \infty$ and there exists an exhaustive sequence $(B_n)$ 
 of subsets of $B$ with $\varphi_{B_n|B_n}<+\infty$ almost everywhere,
\end{itemize}
where a sequence $(B_n)$ of subsets of $B$ is said to be \textit{exhaustive} if 
it is nondecreasing for the inclusion, all the $B_n$ have finite measure, and 
$\bigcup_{n \geq 0} B_n = B$ almost everywhere.

\smallskip

As a consequence of Corollary~\ref{cor:InvarianceMesure} and Poincar\'e's recurrence lemma, 
if $A$ itself satisfies either of the hypotheses above, then $(A, \mathm, T)$ is recurrent. 
This alternative set of hypotheses is thus not more general than recurrence, 
but can be convenient, for instance when working with Markov chains in Subsection~\ref{subsec:DynamiqueToMarkov}.
\end{remark}

\subsection{Existence and uniqueness of solutions for Poisson equation}
\label{subsec:EquationPoisson}

Given a function $g$, Equations~\eqref{eq:PoissonOriginal} and~\eqref{eq:PoissonInduit} are Poisson equations in $f$. 
In this subsection, we investigate the existence and uniqueness of its solution, as well as some elementary properties 
such as a maximum principle. We begin with uniqueness.

\begin{lemma}
\label{lem:Liouville}

Let $(A, \mathm, T)$ be measure-preserving, with $\mathm$ a recurrent $\sigma$-finite measure. 
Let $p \in [1, \infty]$, and $f_1$, $f_2$, $g \in \Lbb^p (A, \mathm)$ such that:
\begin{equation*}
(I-\Lcal) f_1
= (I-\Lcal) f_2 
= g.
\end{equation*}
Then $f_1-f_2$ is $T$-invariant almost everywhere. In particular, if $(A, \mathm, T)$ is also ergodic, 
then $f_1-f_2$ is constant almost everywhere. 
\end{lemma}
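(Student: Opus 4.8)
The plan is to reduce the statement to a known fact: a function $u \in \Lbb^p(A,\mathm)$ satisfying $\Lcal u = u$ must be $T$-invariant almost everywhere, after which the ``in particular'' clause is immediate from ergodicity. Setting $u := f_1 - f_2$, linearity of $\Lcal$ gives $(I-\Lcal)u = g - g = 0$, i.e. $\Lcal u = u$. So the whole content is the implication $\Lcal u = u \implies u \circ T = u$ a.e.

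\smallskip

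First I would handle the case $p = 2$ (or more generally exploit the contraction property on $\Lbb^2$), since there the argument is cleanest: pairing $\Lcal u = u$ against $u$ and using that the Koopman operator $U : v \mapsto v\circ T$ is an isometry on $\Lbb^2$, one gets $\|u\|_2^2 = \langle \Lcal u, u\rangle = \langle u, Uu\rangle$, and the equality case in Cauchy--Schwarz (together with $\|Uu\|_2 = \|u\|_2$) forces $Uu = u$ a.e. For general $p$, since $\mathm$ may be infinite, one cannot directly intersect with $\Lbb^2$. Instead I would use the pointwise/order-theoretic structure of $\Lcal$: the transfer operator is positive ($\Lcal v \geq 0$ whenever $v \geq 0$) and satisfies $\int_A \Lcal v \dd \mathm = \int_A v \dd \mathm$ for $v \geq 0$. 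From $\Lcal u = u$ one deduces $\Lcal|u| \geq |\Lcal u| = |u|$, hence $\Lcal(|u| - \text{something})$\dots more carefully: $|u| = |\Lcal u| \leq \Lcal|u|$ pointwise, and since $|u| \in \Lbb^1$ need not hold, I would instead truncate, working with $u_N := (u \wedge N) \vee (-N)$ or with $|u| \wedge N$, which lies in $\Lbb^1 \cap \Lbb^\infty$ on sets of finite measure. Then $\int \Lcal(|u|\wedge N) \dd\mathm = \int |u| \wedge N \dd\mathm$ while $\Lcal(|u| \wedge N) \geq |\Lcal u| \wedge N \cdot(\dots)$; combining the integral identity with the pointwise inequality $\Lcal|u| \geq |u|$ yields $\Lcal|u| = |u|$ a.e. on the relevant finite-measure pieces, and then the equality case of the triangle inequality $|\Lcal u| \leq \Lcal|u|$ (applied to the positive-kernel representation of $\Lcal$, or via the duality with $U$) forces $u$ and $\Lcal$-mass to have aligned signs, giving $u \circ T = u$ a.e.

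\smallskip

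Alternatively, and perhaps more in the spirit of the paper, one can avoid the $\Lbb^p$ technicalities entirely by invoking recurrence and Proposition~\ref{prop:InvarianceInduction} itself, or by the following self-contained route: from $\Lcal u = u$ and duality, $\int_A u \cdot (v - v\circ T)\dd\mathm = 0$ for all $v \in \Lbb^q$; one wants to promote this to $u - u\circ T = 0$. Using Hopf's ergodic decomposition or the fact that on a recurrent conservative system the only $\Lcal$-harmonic $\Lbb^p$ functions are the invariant ones (a standard consequence of the Chacon--Ornstein / Hopf theory, e.g. as in Aaronson's book already cited) closes it. The main obstacle I anticipate is precisely the $\sigma$-finite, possibly infinite-measure case with general $p \in [1,\infty]$: one must be careful that ``$u$ is $T$-invariant a.e.'' is extracted without dividing by the total mass and without assuming $u \in \Lbb^1$. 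I would therefore structure the proof around the positivity and mass-conservation of $\Lcal$ applied to truncations $|u|\wedge N \mathbf{1}_{B}$ over an exhausting sequence $B$ of finite-measure sets, deduce $\Lcal|u| = |u|$, and then conclude $u\circ T = u$ from the equality case in $|\Lcal u| \le \Lcal |u|$; ergodicity then upgrades ``$T$-invariant'' to ``constant''.
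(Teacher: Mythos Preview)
Your reduction to showing that $\Lcal u = u$ forces $u\circ T = u$ is correct, and the $\Lbb^2$ argument via the equality case of Cauchy--Schwarz is fine. The problem is your primary route for general $p$: the positivity/truncation/equality-case argument, as written, has a genuine gap. Even granting $\Lcal|u| = |u|$ (hence $\Lcal u_\pm = u_\pm$ for the positive and negative parts), the equality case of $|\Lcal u|\le \Lcal|u|$ only tells you that, on each fibre $T^{-1}(\{x\})$, the values of $u$ have a common sign; it does \emph{not} give $u\circ T = u$. You are now back to the original problem for the non-negative fixed points $u_\pm$, and closing that still requires an ergodic-theoretic input (e.g.\ that in a conservative system the Radon--Nikodym derivative of one invariant $\sigma$-finite measure with respect to another is $T$-invariant --- itself a Hopf/Chacon--Ornstein consequence). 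So the positivity route, pursued honestly, circles back to the ``alternative'' you mention only in passing.

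The paper's proof is precisely that alternative, done cleanly. In finite measure, Hurewicz' ergodic theorem gives
\[
f \;=\; \lim_{n\to\infty}\frac{1}{n}\sum_{k=0}^{n-1}\Lcal^k f \;=\; h
\]
with $h$ $T$-invariant (the Ces\`aro average is constantly $f$ since $\Lcal f=f$). In infinite measure, pick any $B$ with $0<\mathm(B)<\infty$; Proposition~\ref{prop:InvarianceInduction} with $g=0$ gives $\Lcal_B f_{|B}=f_{|B}$, and the finite-measure case applied to $(B,\mathm_{|B},T_B)$ yields that $f_{|B}$ is $T_B$-invariant. Since this holds for every such $B$, $f$ is $T$-invariant. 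My suggestion: drop the truncation sketch and commit to this two-line argument, which you already identified as an option.
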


\begin{proof}

Working with $f_1-f_2$, it is enough to prove that any solution $f\in\Lbb^p$ of the 
equation $f = \Lcal (f)$ is $T$-invariant. 

\smallskip

Assume that $\mathm(A) < +\infty$. By Hurewicz' ergodic theorem~\cite{Hurewicz:1944}, there exists a $T$-invariant function $h$ such that:
\begin{equation*}
f
= \lim_{n \to + \infty} \frac{1}{n} \sum_{k=0}^{n-1} \Lcal^k (f) 
= h.
\end{equation*}
Hence, $f$ is $T$-invariant.

\smallskip

Now, assume that $\mathm (A) = + \infty$. Let $B \subset A$ be such that $0 < \mathm (B) < + \infty$. 
By Proposition~\ref{prop:InvarianceInduction}, $f_{|B} = \Lcal_B (f_{|B})$; by the finite measure case, 
$f_{|B}$ is $T_B$-invariant. As this is true for all $B \subset A$ with finite measure, $f$ is 
$T$-invariant.
\end{proof}

Now, let us study the existence of solutions to Equation~\eqref{eq:PoissonOriginal}. 
If $(A, \mathm, T)$ is ergodic and sufficiently hyperbolic and $g$ is smooth with average $0$, 
such solutions can be found. For instance, assume that there exists a Banach space 
$\mathbf{1}_A \in \Bcal \subset \Lbb^1 (A, \mathm)$ such that $\Lcal$ acts quasi-compactly on $\Bcal$. 
This is the case, for instance, if $(A, \mathm, T)$ is a subshift of finite type and $\Bcal$ is the 
space of Lipschitz functions~\cite{Bowen:1975}, or $(A, \mathm, T)$ is a piecewise expanding map 
of the interval and $\Bcal$ is the space of functions with bounded variation~\cite{LasotaYorke:1973}. 
Then, by ergodicity, $1$ is a single eigenvalue of $\Lcal$ corresponding to constant functions, 
and $\Lcal$ preserves $\Bcal_0 := \{f \in \Bcal : \int_A f \dd \mathm = 0\}$. 
Then $(I-\Lcal)$ is invertible on $\Bcal_0$. Hence, for all $g \in \Bcal_0$, there exists 
a solution $f \in \Bcal_0$ of Equation~\eqref{eq:PoissonOriginal}.

\smallskip

However, a general theme when using induction for dynamical system is that even if the initial system $(A, \mathm, T)$ 
is not hyperbolic, a well-chosen induced system $(B, \mathm, T_B)$ might be. That is, we may not find 
such a nice Banach space for $(A, \mathm, T)$, but still have one for $(B, \mathm, T_B)$. Hence, a method to prove the 
existence of solutions to Equation~\eqref{eq:PoissonOriginal} is to find a solution of Equation~\eqref{eq:PoissonInduit}, 
and to extend it to a solution of Equation~\eqref{eq:PoissonOriginal}. The following lemma states that this is possible, 
with a control on the $\Lbb^\infty$ norm of the extension (also known as a maximum principle).

\begin{lemma}
\label{lem:PoissonExtension}

Let $(A, \mathm, T)$ be measure-preserving, with $\mathm$ a $\sigma$-finite measure, ergodic and recurrent. 
Let $B \subset A$ be such that $0<\mathm(B) \leq +\infty$.

\smallskip

Let $f_B \in \Lbb^\infty (B, \mathm)$, $g \in \Lbb^\infty (A, \mathm)$ be such that $g \equiv 0$ on $B^c$ and:
\begin{equation*}
(I-\Lcal_B) f_B = g_{|B}.
\end{equation*}
Then there is a unique function $f \in \Lbb^\infty (A, \mathm)$ such that $f_{|B} = f_B$ and:
\begin{equation*}
(I-\Lcal) f = g.
\end{equation*}
In addition, $\norm{f}{\Lbb^\infty (A, \mathm)} = \norm{f_B}{\Lbb^\infty (B, \mathm)}$.
\end{lemma}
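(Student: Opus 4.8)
\textbf{Proof proposal for Lemma~\ref{lem:PoissonExtension}.}

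The plan is to construct $f$ explicitly by following backward orbits until they hit $B$, using the hitting-time decomposition that already appeared in the proof of Proposition~\ref{prop:InvarianceInduction}. Concretely, given $x \in A$ that is not in $B$, the natural candidate is $f(x) := \sum_{k=0}^{\varphi_B(x)-1} g\circ T^k(x) + f_B\circ T^{\varphi_B(x)}(x)$; but since the transfer operator $\Lcal$ goes "backward", the honest way to phrase this is via the Koopman/duality identity, testing against arbitrary $h \in \Lbb^1(A,\mathm)$. I would first define $f$ on $B$ to be $f_B$, and then define the extension to $B^c$ by the formula above, checking it is well-defined $\mathm$-a.e.\ (using $\varphi_B < +\infty$ a.e., which holds because $(A,\mathm,T)$ is recurrent, and in the infinite-measure case exhausting $B$ by finite-measure subsets as in the proof of Proposition~\ref{prop:InvarianceInduction}).

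Next I would verify the $\Lbb^\infty$ bound. The inequality $\norm{f}{\Lbb^\infty(A,\mathm)} \geq \norm{f_B}{\Lbb^\infty(B,\mathm)}$ is immediate since $f$ extends $f_B$. For the reverse inequality one uses the telescoping structure: on $B$ we have $f_B - \Lcal_B(f_B) = g_{|B}$, so iterating $f_B = g_{|B} + \Lcal_B(f_B) = \dots$ expresses $f_B$ as a (weighted, under the $T_B$-invariant measure $\mathm_{|B}$) average of values of $g$ along forward $\Lcal$-orbit segments together with a tail term; combined with the backward construction on $B^c$, every value of $f$ is seen to be such an average of values of $g$ — but actually the cleanest route is to observe that the candidate $f$ satisfies a maximum-principle-type identity directly: $f = \mathbf{1}_{B^c}\cdot(g + \Lcal f) + \mathbf{1}_B f_B$, and then iterate to write, for any $h \geq 0$ in $\Lbb^1$, $\int_A f\cdot h \dd\mathm$ as a limit of integrals involving $g$ and $f_B$ with nonnegative coefficients summing (against $h$) to $\int_A h\dd\mathm$; this yields $|f| \leq \max(\norm{g}{\infty}\cdot(\text{something}),\norm{f_B}{\infty})$ and a more careful bookkeeping — using that the coefficients in front of the $g$-terms telescope against the coefficient of the $f_B$-term — collapses this to exactly $\norm{f}{\Lbb^\infty} = \norm{f_B}{\Lbb^\infty}$. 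I expect this bookkeeping to be the main obstacle: one must show the $g$-contributions do not inflate the sup-norm beyond $\norm{f_B}{\infty}$, which works precisely because $g$ is supported on $B$ and each excursion into $B^c$ contributes $g$-terms only at its endpoint in $B$, so they get reabsorbed into the $f_B$ values.

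Then I would check that $f$ so constructed actually solves $(I-\Lcal)f = g$ on all of $A$. Testing against $h \in \Lbb^1(A,\mathm)$: the identity $\int_A f\cdot h\dd\mathm = \int_A g\cdot h\dd\mathm + \int_A f\cdot h\circ T\dd\mathm$ is verified by splitting $h$ over the sets $A_n = \{\varphi_B = n\}$ and the location (in $B$ or $B^c$), using on one hand the defining relation of $f$ on $B^c$ and on the other hand the hypothesis $f_B - \Lcal_B f_B = g_{|B}$ on $B$, exactly reversing the recursive computation~\eqref{eq:InvarianceRecurrence} in the proof of Proposition~\ref{prop:InvarianceInduction}. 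The convergence of the relevant series/tail terms is handled as there: the tail $\int_{C_n} \ldots$ vanishes because $\mathm(C_n) \to 0$ (finite-measure case) or by the "mass cannot escape" argument (infinite-measure case).

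Finally, uniqueness: if $f$ and $f'$ both lie in $\Lbb^\infty(A,\mathm)$, restrict to $B$ equal to $f_B$, and solve $(I-\Lcal)(\cdot) = g$, then $f - f'$ solves $(I-\Lcal)(f-f') = 0$ and vanishes on $B$. By Lemma~\ref{lem:Liouville}, $f - f'$ is $T$-invariant; since $(A,\mathm,T)$ is ergodic, $f-f'$ is constant a.e.; since it vanishes on $B$ and $\mathm(B) > 0$, that constant is $0$, so $f = f'$. This last step is short and uses ergodicity, which is exactly why ergodicity was added to the hypotheses here (it was not needed in Proposition~\ref{prop:InvarianceInduction}).
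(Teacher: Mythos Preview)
Your candidate formula $f(x) = \sum_{k=0}^{\varphi_B(x)-1} g\circ T^k(x) + f_B\circ T^{\varphi_B(x)}(x)$ solves the \emph{Koopman} equation $(I-T)f = g$ (cf.\ Remark~\ref{rmk:Cobords}), not the transfer-operator equation $(I-\Lcal)f = g$. Since $g \equiv 0$ on $B^c$, your formula reduces on $B^c$ to $f(x) = f_B(T^{\varphi_B(x)}(x))$, and one can check on a concrete example (e.g.\ the doubling map with $B=[0,1/2)$) that this does not satisfy $f = \Lcal f$ on $B^c$. You seem to sense this (``the honest way to phrase this is via the Koopman/duality identity''), but testing against $h \in \Lbb^1$ verifies an equation for a \emph{given} $f$; it does not produce a construction. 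Without a correct definition of $f$ on $B^c$, the rest of the outline cannot proceed.

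The paper's construction goes the other way: on $C_n := \{\varphi_B = n\} \cap B^c$ it sets
\[
  f \mathbf{1}_{C_n} := \sum_{k \geq n+1} \Lcal^{k-n}\!\left(f_B \mathbf{1}_{B_k}\right),
\]
i.e.\ it pushes $f_B$ \emph{forward} under $\Lcal$ along backward excursions, rather than pulling it back along forward orbits. The $\Lbb^\infty$ bound --- which you correctly identify as the crux, and which your sketch does not resolve --- then falls out cleanly: one observes that $f_{|C_n} = \Lcal_{B \cup C_n}(f_B \mathbf{1}_B)_{|C_n}$, and $\Lcal_{B \cup C_n}$ is a weak $\Lbb^\infty$-contraction by Proposition~\ref{prop:InvarianceInduction} (or Corollary~\ref{cor:InvarianceMesure}). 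This bypasses entirely the ``coefficients summing to $1$'' bookkeeping you were anticipating. Your uniqueness argument via Lemma~\ref{lem:Liouville} and ergodicity is correct and matches the paper's.
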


\begin{proof}

Let $g$, $f_B$ be as in the lemma. Without loss of generality, assume that $f \geq 0$.
For all $n \geq 1$, let $A_n := \{\varphi=n\}$, 
$B_n := A_n \cap B$ and $C_n := A_n \cap B^c$. Recurrence and ergodicity ensure that 
$\{B, C_n: \ n \geq 1\}$ is a $\mu$-essential partition of $A$. Define $f$ on $B$ by $f \mathbf{1}_B := f_B$, 
and on $C_n$ by:
\begin{equation*}
f \mathbf{1}_{C_n} 
:= \sum_{k=n+1}^{+\infty} \Lcal^{k-n} (f_B \mathbf{1}_{B_k}).
\end{equation*}

\smallskip

Let $n \geq 1$. Extend $f_B$ to a function $f_{B \cup C_n}$ defined on $B \cup C_n$ by 
$f_{B \cup C_n} \mathbf{1}_{C_n} \equiv 0$. Then $f_{|C_n} = \Lcal_{B\cup C_n} (f_{B \cup C_n})_{|C_n}$. 
By Proposition~\ref{prop:InvarianceInduction}, $\Lcal_{B\cup C_n}$ is a weak contraction when acting on $\Lbb^\infty (B \cup C_n, \mathm)$, so that 
$\norm{f_{|C_n}}{\Lbb^\infty (C_n, \mathm)} \leq \norm{f_{B \cup C_n}}{\Lbb^\infty (B \cup C_n, \mathm)} = \norm{f_B}{\Lbb^\infty (B, \mathm)}$, 
and:
\begin{equation*}
\norm{f}{\Lbb^\infty (A, \mathm)} 
= \sup_{D \in \{B, C_n: \ n \geq 1\}} \norm{f}{\Lbb^\infty (D, \mathm)} 
= \norm{f_B}{\Lbb^\infty (B, \mathm)}.
\end{equation*}

\smallskip

Now, we check that $f = \Lcal (f)+g$. Let $n \geq 1$. Since $T^{-1} (C_n) = A_{n+1}$, 
\begin{align*}
\Lcal (f) \mathbf{1}_{C_n} 
& = \Lcal (f \mathbf{1}_{A_{n+1}}) 
= \Lcal (f \mathbf{1}_{B_{n+1}}) + \Lcal \left( \sum_{k=n+2}^{+\infty} \Lcal^{k-n-1} (f_B \mathbf{1}_{B_k}) \right) \\
& = \sum_{k=n+1}^{+\infty} \Lcal^{k-n} (f_B \mathbf{1}_{B_k}) 
= f \mathbf{1}_{C_n}.
\end{align*}
Since this is true for all $n$, we have $f = \Lcal (f) = \Lcal (f)+g$ on $B^c$. Since 
$T^{-1} (B) = A_1$, in the same way,
\begin{equation*}
\Lcal (f) \mathbf{1}_B 
= \sum_{k=1}^{+\infty} \Lcal^k (f_B \mathbf{1}_{B_k}) 
= \Lcal_B (f_B) 
= f_B-g
= f \mathbf{1}_B-g.
\end{equation*}
Hence, $f = \Lcal(f)+g$ almost everywhere on $A$.

\smallskip

The uniqueness of $f$ comes from Lemma~\ref{lem:Liouville}.
\end{proof}

Finally, let us state Propositions~\ref{prop:InvarianceInduction} and Lemma~\ref{lem:Liouville} in another way. 
Let $(A, \mathm, T)$ be measure-preserving, with $\mathm$ a $\sigma$-finite measure, ergodic and recurrent. 
Let $B \subset A$ be such that $0 < \mathm(B) < +\infty$. Let $\Bcal (B, \mathm)$ be a subspace of $\Lbb^\infty (B,\mathm)$, 
and $\Bcal_0 (B, \mathm) := \{g \in \Bcal (B, \mathm) : \ \int_B g \dd \mathm = 0\}$. Let 
$\Gamma : \Bcal_0 (B, \mathm) \to \Lbb^\infty (A, \mathm)$ be a right inverse of $(I-\Lcal)$ on $V$:
\begin{equation*}
 (I-\Lcal) \Gamma g 
 = g \ \ \forall g \in \Bcal_0 (B, \mathm).
\end{equation*}
Let $\Gamma_B : \Bcal_0 (B, \mathm) \to \Lbb^\infty (B, \mathm)$ be a right inverse of $(I-\Lcal_B)$ on $\Bcal_0 (B, \mathm)$.
Then $\Gamma$ may not coincide with $\Gamma_B$ on $B$, because these inverses may differ by a constant. 
However, this ambiguity vanishes if we integrate against an observable $h$ with average $0$ 
and supported on $B$:
\begin{equation}
\label{eq:InvarianceInductionIntegrale}
 \int_A \Gamma (g) \cdot h \dd \mathm 
 = \int_B \Gamma_B (g_{|B}) \cdot h_{|B} \dd \mathm \ \ \forall g \in \Bcal_0 (B, \mathm), \ \forall h \in \Lbb_0^1 (B,\mathm).
\end{equation}

\subsection{Operators with weights}
\label{subsec:CasPondere}

One may want more flexibility in the choice of operators they work with, and in particular use 
general weighted operators instead of the transfer operator $\Lcal$. In order to keep the discussion 
elementary, we put very stringent conditions on these weights.

\smallskip

As in Subsection~\ref{subsec:CasGeneral}, let $(A, \mathm, T)$ be a recurrent measure-preserving dynamical system. 
Let $B \subset A$ be measurable, with $0 < \mathm (B) < + \infty$. Given a measurable function $\phi : A \to \C$ 
such that $\Esup \Re (\phi) < + \infty$, let:
\begin{equation*}
\Lcal_\phi (f) 
:= \Lcal (e^\phi f) \quad \forall f \in \Lbb^p (A, \mathm).
\end{equation*}

We also write $S_{\varphi_B} \phi (x) := S_{\varphi_B (x)} \phi (x)$.

\begin{lemma}
\label{lem:InvarianceInductionPonderee}

Let $(A, \mathm, T)$ be measure-preserving, with $\mathm$ a recurrent $\sigma$-finite measure. 
Let $B \subset A$ be such that $\mathm(B) > 0$. Let $\phi : A \to \C$ be measurable, 
with:
\begin{equation*}
 \Esup_{n \geq 0} S_n \Re (\phi)
 < +\infty.
\end{equation*}
Let $p \in [1, \infty]$ and $f$, $g \in \Lbb^p (A, \mathm)$ be such that $g \equiv 0$ on $B^c$ and:
\begin{equation*}
f = \Lcal_\phi (f)+g.
\end{equation*}

Then:
\begin{equation*}
f_{|B} = \Lcal_{B,(S_{\varphi_B} \phi)} (f_{|B})+g_{|B}\, , \text{ where } \Lcal_{B,\psi}:=\Lcal_B(e^\psi\cdot)\, .
\end{equation*}
\end{lemma}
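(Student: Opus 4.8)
The plan is to mimic the proof of Proposition~\ref{prop:InvarianceInduction}, tracking the multiplicative cocycle $e^{S_n\phi}$ in place of the unweighted transfer operator. First I would set up the same notation: write $\varphi$ for $\varphi_B$, and for $n \geq 1$ put $A_n := \{\varphi = n\}$, $B_n := A_n \cap B$, $C_n := A_n \cap B^c$, with $q$ the conjugate exponent of $p$. The first step is to establish the weighted analogue of the recursive formula~\eqref{eq:InvarianceRecurrence}: for every $h \in \Lbb^q(A,\mathm)$ and every $n \geq 1$,
\begin{equation*}
\int_B f \cdot h \dd \mathm
= \int_B g \cdot h \dd \mathm + \sum_{k=1}^n \int_{B_k} f \cdot e^{S_k \phi} \cdot h \circ T^k \dd \mathm + \int_{C_n} f \cdot e^{S_n \phi} \cdot h \circ T^n \dd \mathm.
\end{equation*}
The induction basis comes from $\int_B f h \dd\mathm = \int_B g h \dd\mathm + \int_A \Lcal(e^\phi f)\mathbf{1}_B h \dd\mathm = \int_B g h\dd\mathm + \int_A f e^\phi (\mathbf{1}_B h)\circ T \dd\mathm = \int_B g h \dd\mathm + \int_{A_1} f e^\phi \cdot h\circ T \dd\mathm$, splitting $A_1 = B_1 \sqcup C_1$. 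The induction step uses, on $C_n$ where $g$ vanishes, the chain of equalities $\int_{C_n} f e^{S_n\phi} h\circ T^n \dd\mathm = \int_A \Lcal(e^\phi f) \mathbf{1}_{C_n} e^{S_n\phi}\circ T \cdot h\circ T^{n+1}\dd\mathm$ — here one needs the cocycle relation $e^{S_n\phi}\circ T \cdot e^\phi = e^{S_{n+1}\phi}$ — followed by $= \int_{A_{n+1}} f e^{S_{n+1}\phi} h\circ T^{n+1}\dd\mathm$ since $T^{-1}C_n = A_{n+1}$, and then splitting $A_{n+1} = B_{n+1}\sqcup C_{n+1}$.

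Once the recursion is in place, the argument proceeds exactly as before, the only new ingredient being the uniform bound $M := e^{\Esup_{n\geq 0} S_n \Re(\phi)} < +\infty$, which dominates $|e^{S_n\phi}| = e^{S_n\Re(\phi)} \leq M$ pointwise, uniformly in $n$. For the tail term I would estimate, in the case $p = \infty$ with $\mathm(B) < +\infty$, $|\int_{C_n} f e^{S_n\phi} h\circ T^n\dd\mathm| \leq M \norm{f}{\Lbb^\infty} \omega_h(\mathm(C_n))$ where $\omega_h(\varepsilon) = \sup\{\int_E |h|\dd\mathm : \mathm(E)\leq\varepsilon\}$; since $\mathm(C_n) = \mathm(B\cap\{\varphi>n\}) \to 0$ by recurrence (using $f = \mathbf{1}_A$, $g = \mathbf{0}$, $\phi = 0$, i.e. the already-established measure invariance), this tends to $0$. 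For $p < \infty$, the bound is $M\norm{f\mathbf{1}_{C_n}}{\Lbb^p}\norm{h}{\Lbb^q} \to 0$ by dominated convergence. The middle sum $\sum_{k=1}^n \int_{B_k} f e^{S_k\phi} h\circ T^k\dd\mathm = \int_B \mathbf{1}_{\{\varphi\leq n\}} f \cdot (e^{S_\varphi\phi})\cdot (h\circ T_B)\dd\mathm$ converges, by monotone/dominated convergence using that $T_B$ preserves $\mathm_{|B}$ (Corollary~\ref{cor:InvarianceMesure}) and that $e^{S_\varphi \Re(\phi)}\leq M$, to $\int_B f \cdot e^{S_\varphi\phi}\cdot h\circ T_B\dd\mathm = \int_B \Lcal_{B,(S_\varphi\phi)}(f_{|B})\cdot h\dd\mathm$. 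Taking the limit in the recursion gives $\int_B f h \dd\mathm = \int_B g h\dd\mathm + \int_B \Lcal_{B,(S_\varphi\phi)}(f_{|B}) h\dd\mathm$ for all $h$, whence the claim.

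For the case $\mathm(B) = +\infty$ I would again reduce to finite measure by reinducing on an exhaustive family $\widetilde{B}\subset B$, exactly as in the proof of Proposition~\ref{prop:InvarianceInduction}: the "mass cannot escape" argument there only used the unweighted transfer operator and the already-proven finite-measure case, so it applies verbatim to control $\mathm(C_n)\to 0$ (equivalently, $\int_B |h|\dd\mathm - \int_B |h|\circ T_B\dd\mathm = 0$), and then the uniform bound $M$ on the weight takes care of the tail term $|\int_{C_n} f e^{S_n\phi} h\circ T^n\dd\mathm| \leq M\norm{f}{\Lbb^\infty}\int_{C_n}|h|\circ T^n\dd\mathm \to 0$.

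The main obstacle I anticipate is purely bookkeeping: correctly propagating the cocycle $e^{S_k\phi}$ through the recursion, in particular verifying the identity $e^\phi\cdot(e^{S_n\phi}\circ T) = e^{S_{n+1}\phi}$ and making sure the evaluation points (orbit segments of length $\varphi_B$) match up so that $\sum_k \int_{B_k}(\cdots) = \int_B \mathbf{1}_{\{\varphi\leq n\}}(\cdots)$ with the weight $e^{S_{\varphi_B}\phi}$ and $h\circ T_B$. There is also the minor subtlety that $\Esup_{n\geq 0} S_n\Re(\phi) < +\infty$ only bounds $\Re(\phi)$ (and hence $|e^{S_n\phi}|$) from above, not $\phi$ itself, so all domination steps must be phrased in terms of $M = e^{\Esup_{n\geq 0} S_n\Re(\phi)}$ rather than any bound on $\phi$; this is exactly why the hypothesis is stated with the supremum over all $n$ rather than just on $\phi$. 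No genuinely new analytic idea beyond the proof of Proposition~\ref{prop:InvarianceInduction} is needed.
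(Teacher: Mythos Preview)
Your proposal is correct and follows essentially the same approach as the paper's proof: establish the weighted recursion formula, bound the tail term by $e^{\Esup_{n\geq 0} S_n\Re(\phi)} \int_{C_n} |f|\cdot|h|\circ T^n\dd\mathm$ and invoke the convergence already proved in Proposition~\ref{prop:InvarianceInduction}, then identify the limit of the partial sums with $\int_B \Lcal_{B,(S_{\varphi_B}\phi)}(f_{|B})\cdot h\dd\mathm$. (One minor slip: in your induction step the intermediate expression should read $\int_A e^\phi f\cdot \mathbf{1}_{C_n}\circ T\cdot e^{S_n\phi}\circ T\cdot h\circ T^{n+1}\dd\mathm$ after applying duality, not $\int_A \Lcal(e^\phi f)\mathbf{1}_{C_n} e^{S_n\phi}\circ T\cdot h\circ T^{n+1}\dd\mathm$; the subsequent cocycle identity and conclusion are stated correctly.)
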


\begin{proof}

The proof of this lemma mirrors the proof of Proposition~\ref{prop:InvarianceInduction}.
Let $f$, $g$ and $\phi$ be as in the lemma. Let $q$ be the conjugate of $p$, and $h \in \Lbb^q (A, \mathm)$. 
Equation~\eqref{eq:InvarianceRecurrence} becomes, for all $n \geq 1$:
\begin{equation}
\label{eq:InvarianceRecurrencePonderee}
\int_B f \cdot h \dd \mathm 
= \int_B g \cdot h \dd \mathm + \sum_{k=1}^n \int_{B_k} f e^{S_k \phi} \cdot h \circ T^k \dd \mathm  + \int_{C_n} f e^{S_n \phi} \cdot h \circ T^n \dd \mathm.
\end{equation}
Note that:
\begin{equation*}
 \left| \int_{C_n} f e^{S_n \phi} \cdot h \circ T^n \dd \mathm \right|
 \leq e^{\Esup_{n \geq 0} S_n \Re (\phi)} \int_{C_n} |f| \cdot |h| \circ T^n \dd \mathm 
 \to_{n \to + \infty} 0,
\end{equation*}
where the limit as $n$ goes to infinity follows from the arguments in the proof of Proposition~\ref{prop:InvarianceInduction}. 
Finally, 
\begin{equation*}
 \sum_{n=1}^{+ \infty} \int_{B_n} f e^{S_n \phi} \cdot h \circ T^n \dd \mathm 
 = \int_B \sum_{n=1}^{+ \infty} \Lcal^n (e^{S_n \phi} \mathbf{1}_{B_n} f) \cdot h \dd \mathm 
 = \int_B  \Lcal_{B, (S_{\varphi_B} \phi)} (f) \cdot h \dd \mathm,
\end{equation*}
where the series converges in $\Lbb^p (B, \mathm)$.
\end{proof}

Lemma~\ref{lem:InvarianceInductionPonderee} is especially interesting when applied to the so-called 
\emph{geometrical weights}; for instance, if $T$ is an Axiom A diffeomorphism with strong unstable 
direction $E^u$ and $s \in \C$ with $\Re (s) \geq 0$, one may take $\phi (x) = - s \ln |\det ((D_x T)_{|E^u})|$. Then 
the tangent space of $B$ also admits a splitting into stable and unstable directions for $T_B$, 
and by the chain rule, for all recurrent $x \in B$:
\begin{equation*}
 - s \ln |\det ((D_x T_B)_{|E^u})| 
 = (S_{\varphi_B} \phi) (x).
\end{equation*}
The potential $S_{\varphi_B} \phi$ admits the same geometric interpretation as $\phi$, 
but for $T_B$ instead of $T$.

\subsection{Functions without localisation}
\label{subsec:SansLocalisation}

Proposition~\ref{prop:InvarianceInduction} uses crucially the hypothesis that $(I-\Lcal) f = g$ 
is supported on the set $B$ on which we induce. However, in practice, one may want to induce multiple times, 
or on small sets, in which case this hypothesis may prove inconvenient. We now discuss what happens 
when one omits this localisation hypothesis, giving a variant of~\cite[Exercise~2.16]{Revuz:1975}. 
In order to simplify the discussion, we restrict ourselves to inducing sets $B$ with finite measure.

\begin{proposition}
\label{prop:InvarianceInductionNonLocalisee}

Let $(A, \mathm, T)$ be measure-preserving, with $\mathm$ a recurrent $\sigma$-finite measure. 
Let $B \subset A$ be such that $0 < \mu (B) < +\infty$. For $n \geq 1$, let $C_n := \{\varphi_B = n\} \cap B^c$.

\smallskip

Let $p \in [1, \infty]$ and $f$, $g \in \Lbb^p (A, \mathm)$ be such that:
\begin{equation*}
 (I-\Lcal) f 
 = g,
\end{equation*}
and:
\begin{equation}
\label{eq:HypotheseSupplementaireG}
 \sum_{k \geq 1} \norm{\Lcal^k(\mathbf{1}_{C_k} g)}{\Lbb^p (B, \mathm)}
 < +\infty.
\end{equation}

Then:
\begin{equation*}
 (I-\Lcal_B) f_{|B} 
 = g_{|B} + \sum_{n \geq 1} \Lcal^k(\mathbf{1}_{C_k} g).
\end{equation*}
\end{proposition}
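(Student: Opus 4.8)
The plan is to follow the same recursive scheme as in the proof of Proposition~\ref{prop:InvarianceInduction}, keeping track of the extra terms that now appear because $g$ need not vanish on $B^c$. First I would fix $p\in[1,\infty]$, let $q$ be its conjugate exponent, fix $h\in\Lbb^q(A,\mathm)$, and set $A_n:=\{\varphi_B=n\}$, $B_n:=A_n\cap B$, $C_n:=A_n\cap B^c$ as before. Using $(I-\Lcal)f=g$ and the duality between $\Lcal$ and the Koopman operator, I would establish by induction on $n$ the analogue of~\eqref{eq:InvarianceRecurrence}: for all $n\geq1$,
\begin{align*}
\int_B f\cdot h\dd\mathm
&= \int_B g\cdot h\dd\mathm + \sum_{k=1}^n\int_{B_k} f\cdot h\circ T^k\dd\mathm
 + \sum_{k=1}^n\int_{C_k} g\cdot h\circ T^k\dd\mathm\\
&\quad + \int_{C_n} f\cdot h\circ T^n\dd\mathm.
\end{align*}
The only change in the inductive step relative to the original proof is that, when rewriting $\int_{C_n} f\cdot h\circ T^n\dd\mathm$, one replaces $f$ on $C_n$ by $g+\Lcal(f)$ and no longer has $g\mathbf{1}_{C_n}=0$, so an extra summand $\int_{C_n} g\cdot h\circ T^n\dd\mathm$ is produced at each stage; summing these gives the new $\sum_{k=1}^n\int_{C_k} g\cdot h\circ T^k\dd\mathm$ term.

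Next I would pass to the limit $n\to\infty$ in this identity. The term $\int_{C_n} f\cdot h\circ T^n\dd\mathm$ tends to $0$ by exactly the arguments already given in the proof of Proposition~\ref{prop:InvarianceInduction} (splitting into the cases $p=\infty$ and $p<\infty$, using $\mathm(C_n)\to0$ in the finite-measure case and the reinduction argument to handle $\mathm(B)=+\infty$; note that here $\mathm(B)<+\infty$ is assumed, which simplifies matters). The term $\sum_{k=1}^n\int_{B_k} f\cdot h\circ T^k\dd\mathm$ converges to $\int_B f\cdot h\circ T_B\dd\mathm = \int_B \Lcal_B(f_{|B})\cdot h\dd\mathm$, again as in the original proof, since $T_B$ preserves $\mathm_{|B}$ by Corollary~\ref{cor:InvarianceMesure} and $f\cdot(h\circ T_B)\in\Lbb^1(B,\mathm)$. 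For the new term, note that $\int_{C_k} g\cdot h\circ T^k\dd\mathm = \int_A f\mathbf{1}_{C_k}\cdot h\circ T^k$ should instead be rewritten via duality as $\int_B \Lcal^k(\mathbf{1}_{C_k}g)\cdot h\dd\mathm$; then hypothesis~\eqref{eq:HypotheseSupplementaireG} guarantees that $\sum_{k\geq1}\Lcal^k(\mathbf{1}_{C_k}g)$ converges in $\Lbb^p(B,\mathm)$, hence $\sum_{k=1}^n\int_B\Lcal^k(\mathbf{1}_{C_k}g)\cdot h\dd\mathm \to \int_B\big(\sum_{k\geq1}\Lcal^k(\mathbf{1}_{C_k}g)\big)\cdot h\dd\mathm$ by boundedness of $h$ in $\Lbb^q$.

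Collecting the three limits yields, for every $h\in\Lbb^q(A,\mathm)$,
\begin{equation*}
\int_B f\cdot h\dd\mathm
= \int_B g\cdot h\dd\mathm + \int_B\Lcal_B(f_{|B})\cdot h\dd\mathm
+ \int_B\Big(\sum_{k\geq1}\Lcal^k(\mathbf{1}_{C_k}g)\Big)\cdot h\dd\mathm,
\end{equation*}
and since this holds for all $h\in\Lbb^q(B,\mathm)$ (extending by $0$ off $B$), duality gives
\begin{equation*}
(I-\Lcal_B)f_{|B} = g_{|B} + \sum_{k\geq1}\Lcal^k(\mathbf{1}_{C_k}g)
\end{equation*}
in $\Lbb^p(B,\mathm)$, as claimed. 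The step I expect to require the most care is the interchange of limit and the (now infinite) sum over $k$: one must justify that the recursive identity genuinely yields a convergent series rather than merely a Cesàro-type statement, and this is exactly where~\eqref{eq:HypotheseSupplementaireG} is used — it converts the formal series $\sum_k\Lcal^k(\mathbf{1}_{C_k}g)$ into a norm-convergent one in $\Lbb^p(B,\mathm)$, which both makes the right-hand side well-defined and legitimizes passing to the limit against $h\in\Lbb^q$. The other slightly delicate point is the $\mathm(B)=+\infty$-style reinduction argument controlling $\int_{C_n}f\cdot h\circ T^n$, but here it is not needed since $\mathm(B)<+\infty$ is assumed, so one only needs $\mathm(C_n)\to0$, which follows from recurrence exactly as before.
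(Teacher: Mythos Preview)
Your approach is essentially identical to the paper's: derive the recursive identity with the extra $\sum_k \int_{C_k} g\cdot h\circ T^k$ term, then pass to the limit using the arguments of Proposition~\ref{prop:InvarianceInduction} for the $f$-terms and hypothesis~\eqref{eq:HypotheseSupplementaireG} for the new $g$-term. Two small corrections: the upper index in the new sum should be $n-1$ rather than $n$ (the extra term $\int_{C_n} g\cdot h\circ T^n$ only appears when you expand $\int_{C_n} f\cdot h\circ T^n$ to pass from stage $n$ to stage $n+1$, so at stage $n$ you have only accumulated $k=1,\ldots,n-1$), and in the sentence ``$\int_{C_k} g\cdot h\circ T^k\dd\mathm = \int_A f\mathbf{1}_{C_k}\cdot h\circ T^k$'' the $f$ is a typo for $g$.
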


\begin{proof}

Let $f$, $g$ be as in the proposition. For all $n \geq 1$, let $A_n := \{\varphi = n\}$ 
and $B_n := A_n \cap B$. Denote by $q$ the conjugate of $p$. Without the hypothesis that $g$ 
vanishes on $B^c$, for all $h \in \Lbb^q (A, \mathm)$, for all $n \geq 1$, Equation~\eqref{AAA} becomes
\begin{equation*}
 \int_{C_n} f \cdot h \circ T^n \dd \mathm 
 = \int_{C_n} g \cdot h \circ T^n \dd \mathm +  \int_{B_{n+1}} f \cdot h \circ T^{n+1} \dd \mathm + \int_{C_{n+1}} f \cdot h \circ T^{n+1} \dd \mathm,
\end{equation*}
whence Equation~\eqref{eq:InvarianceRecurrence} becomes:
\begin{equation}
\label{eq:InvarianceRecurrenceNonLocalisee}
\int_B f \cdot h \dd \mathm 
= \int_B g \cdot h \dd \mathm + \sum_{k=1}^n \int_{B_k} f \cdot h \circ T^k \dd \mathm + 
\sum_{k=1}^{n-1} \int_{C_k} g \cdot h \circ T^k \dd \mathm + \int_{C_n} f \cdot h \circ T^n \dd \mathm.
\end{equation}
The convergence of the right hand-side as $n$ goes to infinity works out as in the proof of
Proposition~\ref{prop:InvarianceInduction} for the terms involving $f$, and uses the 
hypothesis on $g$ for the remaining term. We get:
\begin{equation*}
\int_B f \cdot h \dd \mathm 
= \int_B \left[ g + \Lcal_B (f_{|B}) + \sum_{n \geq 1} \Lcal^n (\mathbf{1}_{C_n} g) \right] \cdot h \dd \mathm.
\end{equation*}
This equation holds for all $h \in \Lbb^q (A, \mathm)$, from which Proposition~\ref{prop:InvarianceInductionNonLocalisee} 
follows.
\end{proof}

For $p = 1$, the additional hypothesis~\eqref{eq:HypotheseSupplementaireG} on $g$ comes for free. Indeed,
\begin{equation*}
 \sum_{n \geq 1} \norm{\Lcal^n(\mathbf{1}_{C_n} g)}{\Lbb^1 (B, \mathm)} 
 = \sum_{n \geq 1} \int_{C_n} |g| \dd \mathm 
 = \int_{B^c} |g| \dd \mathm 
 \leq \norm{g}{\Lbb^1 (A, \mathm)}.
\end{equation*}
In particular, if $\mathm (A) < +\infty$, then one can work in $\Lbb^1 (A, \mathm)$ and 
immediately get that:
\begin{equation*}
 (I-\Lcal_B) f_{|B} 
 = g_{|B} + \sum_{n \geq 1} \Lcal^k(\mathbf{1}_{C_k} g),
\end{equation*}
where the series converges in $\Lbb^1 (B, \mathm)$. If $f$ and $g$ belong to 
$\Lbb^1 (A, \mathm) \cap \Lbb^p (A, \mathm)$, then the series is also in 
$\Lbb^p (B, \mathm)$.

\smallskip

However, proving directly that the series $\sum_{n \geq 1} \Lcal^k(\mathbf{1}_{C_k} g)$ 
converges in $\Lbb^p (B, \mathm)$ for some $p > 1$ is more delicate. 
In the finite measure case, given $r < p$, Riesz-Thorin's inequality yields:
\begin{equation*}
 \sum_{n \geq 1} \norm{\Lcal^n(\mathbf{1}_{C_n} g)}{\Lbb^r (B, \mathm)} 
 \leq \norm{g}{\Lbb^p (A, \mathm)} \sum_{n \geq 1} 
\mathm (\varphi_B = n)^{\frac{1}{r} - \frac{1}{p}}\, ;
\end{equation*}
if the tails of $\varphi_B$ decay fast enough (which implies $\mathm (A) < +\infty$), 
this upper bound ensures that the series converges in $\Lbb^r (B, \mathm)$. In particular, 
if the tails of $\varphi_B$ decay exponentially fast, then the loss of integrability is arbitrarily small.

\smallskip

Depending on the system, one may improve these rough estimates. For instance, if $(A, \mathm, T)$ 
is a Gibbs-Markov map, $B$ is measurable with respect to the Markov partition and $g$ is integrable and locally 
Lipschitz with integrable Lipschitz seminorm, one may use a version~\cite[Proposition~4.6.2]{Aaronson:1997} together 
with~\cite[Lemme~1.1.13]{Gouezel:2008e} to get that the series in Equation~\eqref{eq:HypotheseSupplementaireG} 
converges with $p=\infty$.

\section{Relationship with recurrent Markov chains}
\label{sec:DynamiqueMarkov}

The tools developped in Section~\ref{sec:InvarianceInduction} are analogous to those developped 
for Markov chains, in particular in~\cite[Chapter~8.2]{Revuz:1975}. We aim to make this analogy explicit, 
in both directions:
\begin{itemize}
 \item by constructing adequate subshifts, a Markov chain can be encoded into a dynamical system. 
 We show what the consequences of Proposition~\ref{prop:InvarianceInduction} on Markov chains are.
 \item conversely, any dynamical system can be seen as a Markov chain 
 with transition kernel $P_x = \delta_{T(x)}$. We show how 
 to recover the main results of Section~\ref{sec:InvarianceInduction} from known results on Markov chains. 
\end{itemize}
As we shall see, these two points of views are mostly equivalent. Using Markov chains offers slightly 
more flexibility, but requires more background. First, let us state a few relevant definitions about Markov 
chains on general state spaces.

\subsection{Definitions and preliminary results}
\label{subsec:MarkovDefinitions}

Let $\Omega$ be a Polish space. A \emph{Markov transition kernel} on $\Omega$ is a 
map $P : \Omega \to \Pcal (\Omega)$ such that $x \mapsto P_x (B)$ 
is measurable for each measurable subset $B \subset \Omega$.
By Kolmogorov's extension theorem, there exists a Markov chain 
$(X_n)_{n \geq 0}$ with transition kernel $P$~\cite[Theorem~2.8]{Revuz:1975}.
In this section, we shall assume that $(X_n)_{n \geq 0}$ admits 
a $\sigma$-finite stationary measure $\mu$.

\smallskip

A Markov chain $(X_n)_{n \geq 0}$ on $\Omega$ with $\sigma$-finite 
stationary measure $\mu$ is said to be \emph{recurrent} if, for any measurable 
$\Psi \subset \Omega$ with $\mu (\Psi) > 0$, almost everywhere for $\mu_{|B}$, 
almost surely there exists some $n \geq 1$ such that $X_n \in \Psi$.

\smallskip

Given a recurrent Markov chain and $\Psi \subset \Omega$ with $\mu (\Psi) > 0$, 
the return time to $\Psi$ is defined by $T_\Psi := \inf \{n > 0: X_n \in \Psi\}$. 
The recurrence implies that $T_\Psi < + \infty$ almost everywhere. Then the sequence of 
hitting times $(0=: T_{\Psi, 0}, T_\Psi=: T_{\Psi, 1}, T_{\Psi, 2}, \ldots)$ 
is well-defined almost everywhere on $\Psi$. By the strong Markov property, if $X_0 \in \Psi$ then 
$(X_{T_{\Psi, n}})_{n \geq 0}$ is a Markov chain, which we shall call the \emph{Markov chain induced on} $\Psi$. 
We shall denote its transition kernel by $P_\Psi$.

\smallskip

A Markov transition kernel $P$ can be seen as an operator acting on the set of measurable bounded functions on $\Omega$, 
or, neglecting what happens on $\mu$-negligible sets, on $\Lbb^\infty (\Omega, \mu)$ by:
\begin{equation*}
 P (h) (\omega) 
 := \int_\Omega h (\omega') \dd P_\omega (\omega') 
 = \Ebb_\omega (f(X_1)). 
\end{equation*}
If $\mu$ is stationary, then $P$ acts on $\Lbb^p (\Omega, \mu)$ for all $p \in [1, +\infty]$. 
Conversely, one can recover $P$ from its action on $\Lbb^p (\omega, \mu)$ up to a $\mu$-negligible set. 
In what follows, we identify a Markov transition kernel (modulo $\mu$) with its action as an operator.

\smallskip

Given a Markov transition kernel $P$ and a stationary measure $\mu$, its dual transition kernel 
is the operator $P^*$ defined by:
\begin{equation*}
 \int_\Omega P(f) \cdot g \dd \mu 
 = \int_\Omega f \cdot P^*(g) \dd \mu
 \quad \forall f \in \Lbb^p (\Omega, \mu), \ \forall g \in \Lbb^q (\Omega, \mu),
\end{equation*}
where $1/p+1/q=1$. 
The following lemma states that recurrence transfers to the dual Markov chain. Its proof is close 
both to that of~\cite[Proposition~3.10]{Revuz:1975}, and of Proposition~\ref{prop:InvarianceInduction}.

\begin{lemma}
\label{lem:RecurrenceDual}
 
 Let $P$ be a Markov transition kernel on a Polish space $\Omega$. Let 
 $\mu$ be a $\sigma$-finite stationary measure, and $P^*$ be the dual Markov transition kernel.
 
 \smallskip
 
 If $\mu$ is recurrent for $P$, then it is recurrent for $P^*$.
\end{lemma}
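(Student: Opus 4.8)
The plan is to mimic the proof of Proposition~\ref{prop:InvarianceInduction}, transposed to the Markov-chain setting where $P$ plays the role of $\Lcal$ and $P^*$ plays the role of the Koopman operator. Fix a measurable $\Psi \subset \Omega$ with $0 < \mu(\Psi) < +\infty$ (the $\sigma$-finite case reduces to this by exhaustion exactly as in Proposition~\ref{prop:InvarianceInduction}: one reinduces on a finite-measure subset $\widetilde{\Psi} \subset \Psi$ and argues that no mass can escape). Recurrence of $P$ means that, starting $\mu_{|\Psi}$-almost everywhere, the chain $(X_n)$ returns to $\Psi$ almost surely; we must show the same for the dual chain $(X_n^*)$ with kernel $P^*$. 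The dual statement is cleanest when phrased via the operators: recurrence of $\mu$ for $P$ is equivalent to $\sum_{n \geq 0} P^n(\mathbf{1}_\Psi) = +\infty$ $\mu$-almost everywhere on $\Psi$ for every $\Psi$ of positive measure, and by duality $\int_\Psi \sum_{n\ge 0}P^n(\mathbf 1_\Psi)\,\de\mu=\int_\Psi\sum_{n\ge 0}(P^*)^n(\mathbf 1_\Psi)\,\de\mu$, so the Green functions of $P$ and $P^*$ integrate to the same (infinite) value on $\Psi$; the work is to upgrade this integral identity to the pointwise divergence that characterises recurrence.

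The concrete route I would take: set $\varphi = \varphi_\Psi$, $A_n = \{\varphi = n\}$, and let $C_n = A_n \cap \Psi^c$, $B_n = A_n \cap \Psi$ as in the earlier proof, but now for the \emph{dual} chain — that is, $\{\varphi > n\}$ refers to the first-return structure of $P^*$. Running the same telescoping computation as in Equation~\eqref{eq:InvarianceRecurrence}, now with $f = g = \mathbf{1}_\Omega$ (which satisfies $P^*(\mathbf 1)=\mathbf 1$ because $\mu$ is stationary, hence $P$-invariant, hence $\mathbf 1$ is $P^*$-harmonic), and pairing against an arbitrary non-negative $h \in \Lbb^1(\Omega, \mu)$ supported on $\Psi$, gives
\begin{equation*}
 \int_\Psi h \dd \mu = \sum_{k=1}^{n} \int_{\Psi} h \circ (\text{dual hitting}) \dd \mu + \int_{C_n} \text{(remainder)} \dd \mu,
\end{equation*}
where the remainder term is controlled by $\mu(C_n^*) = \mu(\Psi \cap \{\varphi_\Psi^* > n\})$ for the dual chain. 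The point of recurrence of $P$ is precisely what forces $\mu(C_n^*) \to 0$: if it did not, a positive-measure subset of $\Psi$ would fail to return under $P^*$, and transposing the non-return event back through the duality pairing would produce a positive-measure subset of $\Psi$ on which the $P$-Green function is finite, contradicting recurrence of $P$. So I would argue by contrapositive — assume $\mu$ is not recurrent for $P^*$, extract a ``wandering'' set $W \subset \Psi$ of positive finite measure for $P^*$ (i.e. $\sum_{n\ge 1}(P^*)^n(\mathbf 1_W)$ is finite on a positive-measure part of $W$, equivalently $\sum_{n\ge 1}\mathbf 1_W\cdot(P^*)^n(\mathbf 1_W)$ has finite $\mu$-integral), dualise to get $\sum_{n \geq 1}\int_W P^n(\mathbf{1}_W) \dd \mu < +\infty$, and conclude that $\sum_{n\ge1}P^n(\mathbf 1_W)<+\infty$ $\mu$-a.e.\ on $W$, contradicting recurrence of $\mu$ for $P$.

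I expect the main obstacle to be the bookkeeping needed to pass between the three equivalent formulations — pointwise divergence of the Green function, almost-sure return of the chain, and the finite-measure-of-wandering-set statement — and to do so \emph{simultaneously for $P$ and $P^*$} through a single duality pairing, since $P$ and $P^*$ do not in general commute and one cannot naively iterate. The cleanest packaging is probably to work entirely with the symmetric bilinear quantity $\int_W \left(\sum_{n=0}^{N} P^n \mathbf 1_W\right) \dd\mu = \int_W \left(\sum_{n=0}^{N} (P^*)^n \mathbf 1_W\right) \dd\mu$, let $N \to +\infty$ by monotone convergence, and invoke the Hopf-type decomposition (conservative part versus dissipative part) for the sub-Markovian operator $f \mapsto \mathbf 1_W P(\mathbf 1_W f)$ and its dual, noting these decompositions coincide because the total Green mass does. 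Recurrence of $\mu$ for $P$ says the dissipative part of $\Omega$ is null; the shared Green mass then forces the dissipative part for $P^*$ to be null as well, which is recurrence for $P^*$. This reduces the whole argument to the finite-measure case handled exactly as in Proposition~\ref{prop:InvarianceInduction}, plus the standard Hopf decomposition, so no genuinely new estimate is required — only care in not assuming more regularity of $P$ than stationarity of $\mu$.
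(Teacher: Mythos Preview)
Your route can be made to work, but it is considerably more roundabout than the paper's, and in its present form it has a soft spot.

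The paper's proof is a five-line computation with the \emph{first-return} kernel rather than the full Green function. One writes, for non-negative $f,g$ supported on a finite-measure $\Psi$,
\[
P_\Psi(f)=\sum_{n\ge 0}\mathbf{1}_\Psi\,(P\mathbf{1}_{\Psi^c})^n(P\mathbf{1}_\Psi)(f),
\]
dualises term by term to get
\[
\int_\Psi P_\Psi(f)\cdot g\dd\mu=\int_\Psi f\cdot\Bigl(\sum_{n\ge 0}\mathbf{1}_\Psi\,(P^*\mathbf{1}_{\Psi^c})^n(P^*\mathbf{1}_\Psi)\Bigr)(g)\dd\mu,
\]
and then plugs in $f=g=\mathbf{1}_\Psi$. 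Recurrence of $P$ says $P_\Psi(\mathbf{1}_\Psi)=\Pbb_\cdot(T_\Psi<\infty)=1$ a.e., so the left side is $\mu(\Psi)$; the right side is $\int_\Psi\Pbb_\omega(T^*_\Psi<\infty)\dd\mu(\omega)\le\mu(\Psi)$, forcing equality pointwise. No Green function, no Hopf decomposition, no contrapositive.

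Your contrapositive step ``$P^*$ not recurrent $\Rightarrow$ there is $W$ with $\sum_n\int_W(P^*)^n\mathbf{1}_W\dd\mu<\infty$'' is exactly where the work hides. Non-recurrence in the paper's sense (failure of almost-sure return) does not hand you such a $W$ for free: you need the Hopf decomposition to produce a dissipative set, and then a further argument to get \emph{integrability} (not just a.e.\ finiteness) of the Green function on that set. This is standard but not free, and you would be importing a theorem of roughly the same strength as what you are trying to prove. By contrast, the first-return formula packages the return probability $\Pbb_\omega(T_\Psi<\infty)$ directly as an operator applied to $\mathbf{1}_\Psi$, so the duality pairing gives the conclusion without any decomposition theorem. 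Your first paragraph (mimic Proposition~\ref{prop:InvarianceInduction}) was actually pointing in the right direction; the move to Green functions in the later paragraphs takes you further from, not closer to, the short argument.
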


\begin{proof}
 
 Let $\Omega$, $P$, and $\mu$ be as in the hypotheses. Let $\Psi \subset \Omega$ 
 with $0 < \mu (\Psi) < +\infty$. Let $f \in \Lbb^1 (\Omega, \mu)$ be non-negative 
 and supported on $\Psi$. Then, almost everywhere on $\Psi$:
 \begin{equation*}
  P_\Psi (f_{|\Psi}) 
  = \sum_{n \geq 0} \mathbf{1}_\Psi (P \mathbf{1}_{\Psi^c})^n (P \mathbf{1}_\Psi) (f)\, ,
 \end{equation*}
with convention $P\mathbf{1}_E:f\mapsto P(\mathbf{1}_E\, f)$ for any $E\subset\Omega$.
 Let $P^*$ be the Markov transition kernel dual to $P$ with respect to $\mu$. 
 For every $g \in \Lbb^\infty (\Omega, \mu)$, non-negative and supported on $\Psi$,
 \begin{equation}
  \label{eq:InvarianceInductionMarkov}
  \int_\Psi P_\Psi (f) \cdot g \dd \mu 
  = \sum_{n \geq 0} \int_\Psi \mathbf{1}_\Psi (P \mathbf{1}_{\Psi^c})^n (P \mathbf{1}_\Psi) (f) \cdot g \dd \mu 
  = \int_\Psi f \cdot \sum_{n \geq 0} \mathbf{1}_\Psi (P^* \mathbf{1}_{\Psi^c})^n (P^* \mathbf{1}_\Psi) (g) \dd \mu.
 \end{equation}
 Note that $P_\Psi (\mathbf{1}_\Psi) (\omega) = \Pbb_\omega (T_\Psi < +\infty) = 1$ almost everywhere on $\Psi$.
 Taking $f = g = \mathbf{1}_{\Psi}$, which is possible since $\Psi$ has finite measure, we get:
 \begin{equation*}
  \mu(\Psi)
  = \int_\Psi \sum_{n \geq 0} \mathbf{1}_\Psi (P^* \mathbf{1}_{\Psi^c})^n (P^* \mathbf{1}_\Psi) (\mathbf{1}_\Psi) \dd \mu 
  = \int_\Psi \Pbb_\omega (T^* < +\infty) \dd \mu (\omega),
 \end{equation*}
 where $T^*$ is the first hitting time of $\Psi$ for the dual Markov chain.
 Hence, $\Pbb_\omega (T^* < +\infty) \equiv \mathbf{1}$ $\mu$-almost everywhere on $\Psi$.
\end{proof}

As Corollary~\ref{cor:InvarianceMesure} follows from Proposition~\ref{prop:InvarianceInduction}, 
it follows from Lemma~\ref{lem:RecurrenceDual} that $\mu_{|\Psi}$ is $P_\Psi$-invariant. Indeed, 
taking $g = \mathbf{1}_\Psi$, for any $f \in \Lbb^1 (\Psi, \mu)$,
\begin{equation*}
 \int_\Psi P_\Psi (f) \dd \mu 
 = \int_\Psi f \cdot P_\Psi^* (\mathbf{1}_\Psi) \dd \mu 
 = \int_\Psi f \dd \mu.
\end{equation*}
As a consequence of Equation~\eqref{eq:InvarianceInductionMarkov}, $(P_\Psi)^* = (P^*)_\Psi$, where 
the first dual is taking with respect to $\mu_{|\Psi}$ and the second with respect to $\mu$.

\subsection{From dynamical systems to Markov chains}
\label{subsec:DynamiqueToMarkov}

Given a transition kernel $P$ with stationary measure $\mu$, a Markov chain can be realized 
on the \textit{canonical space} $A := \Omega^{\otimes \N}$ with the filtration 
$(\Fcal_n)_{n \geq 0} = (\sigma (X_0, \ldots, X_n))_{n \geq 0}$, 
and $X_n ((\omega_k)_{k\ge 0}) := \omega_n$. 
Introducing the shift map:
\begin{equation*}
 T := \left\{ 
 \begin{array}{lll}
  A & \to & A \\
  (\omega_n)_{n \geq 0} & \mapsto & (\omega_{n+1})_{n \geq 0}
 \end{array}
 \right.,
\end{equation*}
we see that $X_n = X_0 \circ T^n$. In addition, this construction 
yields a $\sigma$-finite $T$-invariant measure $\mathm$ on $A$, whose one-dimensional 
marginals are all $\mu$. In this subsection, we are given a Markov chain on $\Omega$ 
with transition kernel $P$ and recurrent stationary measure $\mu$, and $(A, \mathm, T)$ 
shall denote its realization on its canonical space.

\smallskip

Let $(\Psi_n)_{n \geq 0}$ be an exhaustive sequence of subsets of $\Omega$ (recall Remark~\ref{rmk:Recurrence} 
for the definition of exhaustivity), and define $B_n := \Psi_n \times \Omega^{\N_+} \subset A$. Then $(B_n)_{n \geq 0}$ 
is an exhaustive sequence for $A$, and the recurrence of the Markov chain implies that $\varphi_{B_n|B_n} < +\infty$ 
$\mu$-almost everywhere on $B_n$. By Remark~\ref{rmk:Recurrence}, we get the following result of independent interest:

\begin{lemma}
\label{lemma:RecurrenceDuale}
 
 Let $P$ be a Markov transition kernel on a Polish space $\Omega$, with $\sigma$-finite stationary measure $\mu$. 
 Let $(A, \mathm, T)$ be its realisation on its canonical space.
 
 \smallskip
 
 The measure $\mu$ is recurrent for $P$ if and only if the measure $\mathm$ is recurrent for $T$. 
\end{lemma}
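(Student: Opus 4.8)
The plan is to prove Lemma~\ref{lemma:RecurrenceDuale} by reducing it to the recurrence criterion of Remark~\ref{rmk:Recurrence}, exploiting the bijective correspondence between cylinder sets in the canonical space $A = \Omega^{\otimes\N}$ and measurable subsets of $\Omega$. First I would set up notation: for a measurable $\Psi\subset\Omega$ with $0<\mu(\Psi)<+\infty$, write $B := \Psi\times\Omega^{\N_+}\subset A$, so that by construction $\mathm(B) = \mu(\Psi)$, and the first hitting time $\varphi_B$ of $B$ under the shift $T$ is precisely the first hitting time of $\Psi$ by the Markov chain $(X_n)_{n\geq 0}$, i.e. $\varphi_B = \inf\{n\geq 1: X_n\in\Psi\}$. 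Similarly, for a point $x = (\omega_k)_{k\geq 0}\in B$ (so $\omega_0\in\Psi$), the induced return time $\varphi_{B|B}(x)$ — the first return of the $T$-orbit to $B$ — equals the first return time $T_\Psi$ of the Markov chain started at $\omega_0$. The key observation is that the law of this return time under $\mathm_{|B}$, as a function of $\omega_0$, is exactly the quenched return-time law of the Markov chain started at $\omega_0$ integrated against $\mu_{|\Psi}$.

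For the ``only if'' direction, suppose $\mu$ is recurrent for $P$. Take $(\Psi_n)_{n\geq 0}$ an exhaustive sequence of subsets of $\Omega$ (possible since $\mu$ is $\sigma$-finite), and set $B_n := \Psi_n\times\Omega^{\N_+}$. Then $(B_n)$ is an exhaustive sequence for $A$: the $B_n$ are nondecreasing, each has finite measure $\mu(\Psi_n)$, and their union is $A$ up to a $\mathm$-null set. By recurrence of the Markov chain, for $\mu$-a.e.\ $\omega_0\in\Psi_n$ the chain started at $\omega_0$ almost surely returns to $\Psi_n$, which by the identification above means $\varphi_{B_n|B_n}<+\infty$ $\mathm$-a.e.\ on $B_n$. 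The second bullet of Remark~\ref{rmk:Recurrence}, applied with $A$ in the role of the inducing set (note $\mathm(A)$ may be infinite, which is exactly the case the second bullet covers), then gives that $(A,\mathm,T)$ is recurrent.

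For the ``if'' direction, suppose $\mathm$ is recurrent for $T$. Let $\Psi\subset\Omega$ with $\mu(\Psi)>0$; pick any $\Psi'\subset\Psi$ with $0<\mu(\Psi')<+\infty$ and set $B' := \Psi'\times\Omega^{\N_+}$, which has $0<\mathm(B')<+\infty$. Since $(A,\mathm,T)$ is recurrent, $\varphi_{B'}<+\infty$ $\mathm$-a.e.\ on $B'$ — in fact the $T$-orbit returns to $B'$ infinitely often a.e. Translating through the identification, the Markov chain started from $\mu_{|\Psi'}$-a.e.\ $\omega_0$ returns to $\Psi'\subset\Psi$ almost surely, hence in particular to $\Psi$ almost surely. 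Letting $\Psi'$ range over an exhaustive sequence inside $\Psi$ yields the recurrence condition on a $\mu$-conull subset of $\Psi$, which is the definition of recurrence of $\mu$ for $P$.

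The only delicate point — and the one I would spell out carefully — is the measure-theoretic bookkeeping needed to justify that $\varphi_{B|B}<+\infty$ $\mathm$-a.e.\ on $B$ is genuinely equivalent to ``$\Pbb_{\omega_0}(T_\Psi<+\infty)=1$ for $\mu$-a.e.\ $\omega_0\in\Psi$''. This rests on the fact that, under the canonical construction, $\mathm_{|B}$ disintegrates over $\mu_{|\Psi}$ (via $\omega_0$) into the quenched laws $\Pbb_{\omega_0}$ of the chain, together with Fubini to interchange the $\mathm$-a.e.\ and $\Pbb_{\omega_0}$-a.e.\ quantifiers; combined with the strong Markov property so that the canonical orbit's successive visits to $B$ track the chain's successive visits to $\Psi$. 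Everything else is a routine translation of definitions and an invocation of Remark~\ref{rmk:Recurrence}.
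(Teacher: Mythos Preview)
Your proposal is correct and follows essentially the same route as the paper: both argue via the exhaustive-sequence criterion of Remark~\ref{rmk:Recurrence}, taking an exhaustive sequence $(\Psi_n)$ in $\Omega$ and lifting it to the cylinder sets $B_n = \Psi_n \times \Omega^{\N_+}$ in $A$. The paper's argument (the paragraph preceding the lemma) only spells out the forward direction explicitly; you additionally write out the converse and the disintegration/Fubini bookkeeping, which the paper leaves implicit.
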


Note that, if $\mu$ is finite, recurrence follows more directly from Poincar\'e's lemma. 
Hence we will be able to apply Proposition~\ref{prop:InvarianceInduction} to the system $(A, \mathm, T)$. 
For now, let us understand better the action of the transfer operator $\Lcal$.

\smallskip

For any $p \in [0, +\infty]$, there is an isometric embedding $\iota: \Lbb^p (\Omega, \mu) \hookrightarrow \Lbb^p (A, \mathm)$ 
defined by:
\begin{equation*}
 \iota (f) (\omega_0, \omega_1, \ldots) 
 := f(\omega_0).
\end{equation*}
Denote by $\Bcal^p (A, \mathm)$ its image; when working with functions in $\Bcal^p (A, \mathm)$, we may abuse notations 
and denote $\iota (f)$ by $f$.

\smallskip

Let $f \in \Bcal^p (A, \mathm)$ and $h \in \Lbb^q (A, \mathm)$ with $1/p+1/q=1$. Then:
\begin{equation*}
 \int_A f \cdot h \circ T \dd \mathm 
 = \int_A f (\omega_0) \cdot h (\omega_1, \ldots) \dd \mathm (\omega_0, \ldots) 
 = \int_A \frac{\de \left( \int_\Omega f (\omega_0) P_{\omega_0} h (\omega_1, \ldots) \dd \mu (\omega_0) \right)}{\de \mu} \dd \mathm (\omega_1, \ldots).
\end{equation*}
Hence, for $f \in \Bcal^p (A, \mathm)$,
\begin{equation}
\label{eq:DescriptionOperateur}
 \Lcal (f) 
 = \frac{\de \left( \int_\Omega f (\omega_0) P_{\omega_0} \dd \mu(\omega_0) \right)}{\de \mu}.
\end{equation}
Note that, using the construction of the measure $\mathm$ -- that is, the Markov property -- this formula defines a function on $\Omega$; 
that is, $\Lcal(f) \in \Bcal^p (A, \mathm)$. Hence, $\Lcal$ preserves $\Bcal^p (A, \mathm)$. 
Since $T$ acts by isometry on $\Lbb^q (A, \mathm)$, the operator $\Lcal$ is a weak contraction on $\Bcal^p (A, \mathm)$.

\smallskip

In what follows, we fix $p \in [1, +\infty]$, a subset $\Psi \subset \Omega$ such that $\mu (\Psi) >0$, 
and functions $f$, $g \in \Bcal^p (A, \mathm)$ such that:
\begin{equation*}
 (I-\Lcal) f 
 = g.
\end{equation*}
By Proposition~\ref{prop:InvarianceInduction}, 
\begin{equation*}
 (I-\Lcal_B) f_{|B} 
 = g_{|B}.
\end{equation*}
The goal is then to understand $\Lcal_B$, which can be done by studying the induced system $(B, \mathm_{|B}, T_B)$. 
Notice that, since $f_{|B}$ and $g_{|B}$ are both in $\{h \in \Bcal^p (A, \mathm) : \ \Supp (h) \subset B \}$, 
the operator $\Lcal_B$ acts on $\{h \in \Bcal^p (A, \mathm) : \ \Supp (h) \subset B \}$ by the equation above.

\smallskip

Let us define the set $\Ecal(\Psi)$ of excursions from $\Psi$:
\begin{equation*}
 \Ecal(\Psi) 
 := \bigsqcup_{n \geq 0} \Psi \times (\Psi^c)^n.
\end{equation*}
Then any point in $B = \Psi \times \Omega^{\N_+}$ whose $T$-orbit comes back infinitely often to $B$ 
can be written uniquely as a concatenation of words in $\Ecal(\Psi)$. We get a map $\phi$:
\begin{equation*}
 \phi : B \to \Ecal(\Psi)^\N,
\end{equation*}
which by recurrence is well-defined $\mathm_{|B}$-almost everywhere and injective. 
Let $S'$ be the shift on $\Ecal(\Psi)^\N$. Then $\phi$ induces an isomorphism of 
measured dynamical systems between $(B, \mathm_{|B}, T_B)$ and $(\Ecal(\Psi)^\N, \pi_* \mathm_{|B}, S')$.

\smallskip

In addition, the map $(\omega_0, \ldots, \omega_{n+1}) \mapsto \omega_0$ from $\Ecal(\Psi)$ 
to $\Psi$ induces a factor map $\pi: (\Ecal(\Psi)^\N, S') \to (\Psi^\N, S)$, with $S$ the 
shift on $\Psi^\N$. To sum up, the following diagram commutes:
\begin{equation*}
\begin{tikzcd}
B \arrow[d, "T_B"] \arrow[r, "\phi"]  &  \Ecal(\Psi)^\N \arrow[d, "S'"] \arrow[r, "\pi"]  &  \Psi^\N \arrow[d, "S"]  \\
B                  \arrow[r, "\phi"]  &  \Ecal(\Psi)^\N                 \arrow[r, "\pi"]  &  \Psi^\N
\end{tikzcd}
\end{equation*}
which implies that $\mathm_\Psi := \pi_* \phi_* \mathm_{|B}$ is $S$-invariant.

\smallskip

By construction, $\pi \circ \phi ((X_n)_{n \geq 0}) = (X_{T_{\Psi, n}})_{n \geq 0}$ is a Markov chain 
on $\Psi$ with transition kernel $P_\Psi$. Moreover, $[\pi \circ \phi ((X_n)_{n \geq 0})]_0 = X_0$, 
so the first marginal of $\mathm_\Psi$ is $\mu_{|\Psi}$, and $\mathm_\Psi$ is the $S$-invariant 
measure on the canonical space $\Psi^\N$ associated with the induced Markov chain $(X_{T_{\Psi, n}})_{n \geq 0}$ 
and the initial measure $\mu_{|\Psi}$. In particular, $\mu_{|\Psi}$ is stationary for the Markov chain 
induced on $\Psi$. 

\smallskip

Let $\Lcal_\Psi$ be the transfer operator for the system $(\Psi^\N, \mathm_\Psi, S)$.
Functions in $\Bcal^p (A, \mathm)$ supported on $B$ quotient through $\pi$ to get 
functions in $\Bcal^p (\Psi^\N, \mathm_\Psi)$. This yields a bijective isometry:
\begin{equation*}
 \pi_* : \{h \in \Bcal^p (A, \mathm) : \ \Supp (h) \subset B \} \to \Bcal^p (\Psi^\N, \mathm_\Psi).
\end{equation*}
The transformation $\pi_*$ conjugates the transfer operator $\Lcal_B$ on $\{h \in \Bcal^p (A, \mathm) : \ \Supp (h) \subset B \}$ 
with the transfer operator $\Lcal_\Psi$ on $\Bcal^p (\Psi^\N, \mathm_\Psi)$. By Equation~\eqref{eq:DescriptionOperateur},
\begin{equation*}
 \Lcal_B (f_{|B}) 
 = \frac{\de \left( \int_\Psi f (\omega_0) P_{\Psi, \omega_0} \dd \mu(\omega_0) \right)}{\de \mu}.
\end{equation*}
This discussion yields:

\begin{corollary}
\label{cor:InvarianceMarkovDuale}
 
 Let $\Omega$ be a Polish space, and $P$ a Markov transition kernel on $\Omega$. 
 Let $\mu$ be a $\sigma$-finite stationary and recurrent measure.
 
 \smallskip
 
 Let $\Psi \subset \Omega$ be measurable, with $\mu (\Psi) > 0$. Let $p \in [1, +\infty]$ 
 and $f$, $g \in \Lbb^p (\Omega, \mu)$ with $\Supp (g) \subset \Psi$.
 Assume that:
 \begin{equation*}
  (I-\Lcal) f = g.
 \end{equation*}
 Then:
 \begin{equation*}
  (I-\Lcal_\Psi) f_{|\Psi} = g_{|\Psi}.
 \end{equation*} 
\end{corollary}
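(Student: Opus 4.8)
The plan is to realise the Markov chain on its canonical space, apply Proposition~\ref{prop:InvarianceInduction} there, and transport the conclusion back to $\Omega$; as it happens, every ingredient of the required translation has already been set up in the discussion preceding the statement, so the proof is essentially an assembly of those pieces together with a single application of Proposition~\ref{prop:InvarianceInduction}.

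First I would introduce $(A,\mathm,T)$, the realisation of the Markov chain $(X_n)_{n\geq 0}$ on its canonical space, and set $B := \Psi \times \Omega^{\N_+} \subset A$. Since the one-dimensional marginals of $\mathm$ are all $\mu$, one has $\mathm(B) = \mu(\Psi) > 0$ (possibly infinite, which is permitted in Proposition~\ref{prop:InvarianceInduction}); and since $\mu$ is recurrent for $P$, Lemma~\ref{lemma:RecurrenceDuale} gives that $\mathm$ is recurrent for $T$. Next I would lift the hypothesis to $A$: under the isometric embedding $\iota : \Lbb^p(\Omega,\mu) \hookrightarrow \Lbb^p(A,\mathm)$ the functions $f,g$ become $\iota(f), \iota(g) \in \Bcal^p(A,\mathm)$, the condition $\Supp(g)\subset\Psi$ becomes $\Supp(\iota(g))\subset B$, and by Equation~\eqref{eq:DescriptionOperateur} the operator $\Lcal$ preserves $\Bcal^p(A,\mathm)$ and acts on it, through $\iota$, exactly as the operator $\Lcal$ on $\Lbb^p(\Omega,\mu)$ appearing in the statement. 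Hence $(I-\Lcal)f = g$ lifts to $(I-\Lcal)\iota(f) = \iota(g)$, and Proposition~\ref{prop:InvarianceInduction} yields $(I-\Lcal_B)\,\iota(f)_{|B} = \iota(g)_{|B}$ in $\Lbb^p(B,\mathm_{|B})$; here $\iota(f)_{|B}$ and $\iota(g)_{|B}$ lie in the subspace $\{h\in\Bcal^p(A,\mathm):\Supp(h)\subset B\}$ on which $\Lcal_B$ is known to act.

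Finally I would translate back to $\Psi$. The bijective isometry $\pi_* : \{h\in\Bcal^p(A,\mathm):\Supp(h)\subset B\} \to \Bcal^p(\Psi^\N,\mathm_\Psi)$ from the discussion, composed with the inverse of the canonical embedding $\Lbb^p(\Psi,\mu_{|\Psi})\hookrightarrow\Bcal^p(\Psi^\N,\mathm_\Psi)$, sends $\iota(f)_{|B}\mapsto f_{|\Psi}$ and $\iota(g)_{|B}\mapsto g_{|\Psi}$ (both functions depend only on the $\omega_0$-coordinate, and $[\pi\circ\phi((X_n))]_0 = X_0$), and it conjugates $\Lcal_B$ with $\Lcal_\Psi$, the transfer operator of the Markov chain induced on $\Psi$ with stationary measure $\mu_{|\Psi}$. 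Pushing the identity $(I-\Lcal_B)\iota(f)_{|B} = \iota(g)_{|B}$ through these identifications gives $(I-\Lcal_\Psi)f_{|\Psi} = g_{|\Psi}$, as desired. I expect no serious obstacle here: the only delicate point is the bookkeeping of operator identifications just described — that $\Lcal_B$, restricted to functions supported on $B$, matches $\Lcal_\Psi$, and that restriction to $B$ intertwines with $\pi_*$ — but this is precisely what was arranged in the paragraphs leading up to the corollary, so nothing new is needed.
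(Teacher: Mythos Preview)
Your proposal is correct and follows exactly the route the paper takes: the corollary is stated immediately after the words ``This discussion yields'', so its proof \emph{is} the preceding construction --- realise the chain on its canonical space, lift via $\iota$, apply Proposition~\ref{prop:InvarianceInduction} on $(A,\mathm,T)$ with $B=\Psi\times\Omega^{\N_+}$, and push the conclusion back through $\pi_*$ to identify $\Lcal_B$ with $\Lcal_\Psi$. You have assembled these pieces faithfully; there is nothing to add.
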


The operators $P$ and $\Lcal$ are in duality. Indeed,  
for $f \in \Lbb^p (\Omega, \mu)$ and $h \in \Lbb^q (\Omega, \mu)$ with $1/p+1/q=1$,
\begin{align*}
 \int_\Omega f \cdot P(h) \dd \mu 
 & = \int_\Omega f (\omega) \int_\Omega h (\omega') \dd P_\omega (\omega') \dd \mu (\omega) \\
 & = \int_\Omega h (\omega') \frac{\de \left( \int_\Omega f (\omega) \dd P_\omega \dd \mu (\omega) \right) }{\de \mu} \dd \mu(\omega') \\
 & = \int_\Omega \Lcal (f) \cdot h \dd \mu.
\end{align*}
By Lemma~\ref{lemma:RecurrenceDuale}, the measure $\mu$ is recurrent for the transition kernel $\Lcal$.
Applying Corollary~\ref{cor:InvarianceMarkovDuale} to the transition kernel $\Lcal$ on $\Omega$ 
yields the same result for $P$: under the same hypotheses on $f$ and $g$, if
\begin{equation*}
 (I-P) f = g,
\end{equation*}
then:
\begin{equation*}
 (I-P_\Psi) f_{|\Psi} = g_{|\Psi}.
\end{equation*}

\subsection{From Markov chains to dynamical systems}
\label{subsec:MarkovToDynamique}

As any Markov chain can be encoded as a dynamical system, the converse is also true. 
As mentioned in the introduction of this section, a dynamical system can be seen as 
a Markov chain with transition kernel $P_x = \delta_{T(x)}$.

\smallskip

Let $(X_n)_{n \geq 0}$ be a Markov chain on a Polish state space $\Omega$, 
with transition kernel $P$ and recurrent stationary measure $\mu$. In this Subsection, 
we recall a classical result in potential theory (and a variant of~\cite[Corollary~1.11]{Revuz:1975}), 
and apply it to dynamical systems.

\begin{proposition}
\label{prop:InvarianceMarkov}
 
 Let $\Psi \subset \Omega$ be measurable, with $\mu (\Psi) > 0$. Let 
 $f$, $g \in \Lbb^\infty (\Omega, \mu)$ with $\Supp (g) \subset \Psi$.
 Assume that:
 \begin{equation*}
  (I-P) f = g.
 \end{equation*}
 Then:
 \begin{equation*}
  (I-P_\Psi) f_{|\Psi} = g_{|\Psi}.
 \end{equation*} 
\end{proposition}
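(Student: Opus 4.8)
The statement is exactly the potential-theoretic counterpart of Proposition~\ref{prop:InvarianceInduction}: given a recurrent Markov chain with transition kernel $P$ and stationary measure $\mu$, if $(I-P)f = g$ with $g$ supported on $\Psi$, then the restriction $f_{|\Psi}$ solves the induced Poisson equation $(I-P_\Psi) f_{|\Psi} = g_{|\Psi}$. My plan is to deduce it from the dynamical-system results already proved, using the canonical-space encoding set up in Subsection~\ref{subsec:DynamiqueToMarkov} applied to the \emph{dual} kernel $P^*$, rather than re-proving everything with a direct decomposition of $P_\Psi$.

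First I would introduce the dual kernel $P^*$ of $P$ with respect to $\mu$. By Lemma~\ref{lem:RecurrenceDual}, $\mu$ is recurrent for $P^*$ as well. Now realise the $P^*$-Markov chain on its canonical space $(A, \mathm, T)$ as in Subsection~\ref{subsec:DynamiqueToMarkov}; by Lemma~\ref{lemma:RecurrenceDuale} the measure $\mathm$ is recurrent for $T$, and by Equation~\eqref{eq:DescriptionOperateur} the transfer operator $\Lcal$ on $\Bcal^\infty(A,\mathm)$ is canonically identified, via the isometric embedding $\iota$, with the kernel $(P^*)^* = P$ acting on $\Lbb^\infty(\Omega,\mu)$ — that is, $\Lcal$ on $\Bcal^\infty$ \emph{is} $P$. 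Set $B := \Psi \times \Omega^{\N_+}$. Then the hypothesis $(I-P)f = g$ with $\Supp(g) \subset \Psi$ becomes $(I-\Lcal)\iota(f) = \iota(g)$ on $(A,\mathm,T)$ with $\iota(g)$ supported on $B$, so Proposition~\ref{prop:InvarianceInduction} applies and gives $(I-\Lcal_B)\,\iota(f)_{|B} = \iota(g)_{|B}$.

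It remains to translate $\Lcal_B$ back into $P_\Psi$. This is precisely the content of the commutative diagram and the factor map $\pi$ in Subsection~\ref{subsec:DynamiqueToMarkov}: the induced system $(B, \mathm_{|B}, T_B)$ factors onto $(\Psi^\N, \mathm_\Psi, S)$, where $\mathm_\Psi$ is the canonical measure of the chain $(X_{T_{\Psi,n}})_{n\ge0}$, whose transition kernel is $(P^*)_\Psi$; and by the last formula of that subsection, $\Lcal_B$ restricted to functions in $\Bcal^\infty$ supported on $B$ is conjugate to the transfer operator $\Lcal_\Psi$ on $\Bcal^\infty(\Psi^\N,\mathm_\Psi)$, which under $\iota$ is the dual of $(P^*)_\Psi$ with respect to $\mu_{|\Psi}$. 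Finally, Equation~\eqref{eq:InvarianceInductionMarkov} (together with the remark immediately following Lemma~\ref{lem:RecurrenceDual}) gives $\big((P^*)_\Psi\big)^* = \big((P^*)^*\big)_\Psi = P_\Psi$, so $\Lcal_B$ is identified with $P_\Psi$ on $\Lbb^\infty(\Psi,\mu)$. Substituting into $(I-\Lcal_B)\,\iota(f)_{|B} = \iota(g)_{|B}$ yields exactly $(I-P_\Psi) f_{|\Psi} = g_{|\Psi}$.

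The main obstacle is bookkeeping rather than a genuine difficulty: one must be careful that the induction on $\Psi$ at the level of $P^*$-chains on the canonical space matches, under duality, induction on $\Psi$ at the level of the $P$-chain — i.e. that $(P_\Psi)^* = (P^*)_\Psi$, which is where Equation~\eqref{eq:InvarianceInductionMarkov} does the real work — and that the supports and the isometric identifications via $\iota$ and $\pi_*$ are compatible. Alternatively, and perhaps more cleanly for a self-contained argument, one can give a direct proof: expand $P_\Psi = \sum_{n\ge0} \mathbf{1}_\Psi (P\mathbf{1}_{\Psi^c})^n (P\mathbf{1}_\Psi)$ as in Lemma~\ref{lem:RecurrenceDual}, write $f = Pf + g$, iterate to get $f_{|\Psi} = \sum_{k=1}^{n}\mathbf{1}_\Psi(P\mathbf{1}_{\Psi^c})^{k-1}(P\mathbf{1}_\Psi)(f) + g_{|\Psi} + \mathbf{1}_\Psi(P\mathbf{1}_{\Psi^c})^n(f)$, and use recurrence to kill the last term as $n\to\infty$ — this mirrors the recursive formula in the proof of Proposition~\ref{prop:InvarianceInduction} and avoids the encoding machinery altogether.
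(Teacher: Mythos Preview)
Your proof is correct but takes a different route from the paper's. You reduce Proposition~\ref{prop:InvarianceMarkov} to Proposition~\ref{prop:InvarianceInduction} via the canonical-space encoding of the dual chain $P^*$, relying on Lemma~\ref{lem:RecurrenceDual} for recurrence and on the identity $(P_\Psi)^* = (P^*)_\Psi$ from Equation~\eqref{eq:InvarianceInductionMarkov} to translate $\Lcal_B$ back into $P_\Psi$. This is exactly the argument the paper sketches at the end of Subsection~\ref{subsec:DynamiqueToMarkov}, so your main approach is not new to the paper --- it is just placed in a different subsection.

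The paper's own proof of Proposition~\ref{prop:InvarianceMarkov}, by contrast, is a direct probabilistic argument that does not invoke Proposition~\ref{prop:InvarianceInduction} at all. It introduces the stopping time $\widetilde{T}_\Psi = \inf\{n \geq 0 : X_n \in \Psi\}$, observes that since $g \equiv 0$ on $\Psi^c$ the stopped process $(f(X_{n \wedge \widetilde{T}_\Psi}))_{n \geq 0}$ is a bounded martingale, and applies martingale convergence plus dominated convergence to get $\Ebb_\omega(f(X_{\widetilde{T}_\Psi})) = f(\omega)$. Writing $P_\Psi = P \widetilde{P}_\Psi$ (first step by $P$, then wait to hit $\Psi$) immediately gives $P_\Psi f = P f = f - g$. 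This buys a self-contained Markov-chain proof --- which is the point of Subsection~\ref{subsec:MarkovToDynamique}, where the flow of implication is reversed and one wants to \emph{derive} the dynamical statement from the Markov one. Your encoding approach is economical but logically circular for that purpose; the martingale proof is what makes the two directions genuinely independent.

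Your alternative direct-iteration proof is also correct: the remainder $\mathbf{1}_\Psi (P\mathbf{1}_{\Psi^c})^n(f)$ is bounded by $\norm{f}{\infty}\, \Pbb_\omega(X_1,\ldots,X_n \in \Psi^c)$, which vanishes as $n \to \infty$ by recurrence. This is essentially the paper's martingale argument stripped of its probabilistic language --- the partial sums you write down are precisely $\Ebb_\omega(f(X_{n \wedge T_\Psi}))$, and recurrence plays the role of martingale convergence.
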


\begin{proof}

 Let $\Psi$, $f$ and $g$ be as in the hypotheses of the proposition. 
 
 \smallskip
 
 We define the stopping time:
 \begin{equation*}
  \widetilde{T}_\Psi 
  := \inf \{n \geq 0: \ X_n \in \Psi\}.
 \end{equation*}
 Since the Markov chain is recurrent, for almost every $\omega$, either $\widetilde{T}_\Psi <+\infty$ 
 almost surely or $\widetilde{T}_\Psi = +\infty$ almost surely. Let $\Psi^\infty := \{\widetilde{T}_\Psi <+\infty \text{ almost surely}\}$. 
 Using again the recurrence of the Markov chain, we see that $\Psi \subset \Psi^\infty$ and 
 the set $\Psi^\infty$ is invariant. Without loss of generality, we restrict ourselves 
 to $\Psi^\infty$; that is, we assume that $\widetilde{T}_\Psi <+\infty$ almost surely.
 
 \smallskip
 
 Let $\widetilde{X}_n := X_{n \wedge \widetilde{T}_\Psi}$. Then $\widetilde{X}_n$ 
 is a Markov chain on $\Omega$, with transition kernel $\widetilde{P}$. 
 If $\omega \in \Psi$, then $\widetilde{P}_x = \delta_x$; otherwise, $\widetilde{P}_x = P_x$. 
 In addition, recall that $f = P(f)$ on $\Psi^c$. Hence,
 \begin{equation*}
  \widetilde{P} (f) 
  = \mathbf{1}_{\Psi} \widetilde{P} (f) + \mathbf{1}_{\Psi^c} \widetilde{P} (f) 
  = \mathbf{1}_{\Psi} f + \mathbf{1}_{\Psi^c} P(f) 
  = f.
 \end{equation*}
 In other words, for almost every $\omega \in \Omega$, we have $\Ebb_x (f(\widetilde{X}_1)) = \widetilde{X}_0$, 
 so $(f(\widetilde{X}_n))_{n \geq 0}$ is a martingale that is bounded almost surely. In addition, 
 it converges almost surely to $f(X_{\widetilde{T}_\Psi})$. By the dominated convergence theorem, 
 almost everywhere,
 \begin{equation}
  \label{eq:LimiteTemps0}
  \Ebb_\omega (f(X_{\widetilde{T}_\Psi})) 
  = \Ebb_\omega (f(X_0)) 
  = f(\omega).
 \end{equation}
 Let $\widetilde{P}_\Psi$ be the transition kernel of the Markov chain 
 $(X_0, X_{\widetilde{T}_\Psi}, X_{\widetilde{T}_\Psi}, \ldots)$, so that Equation~\eqref{eq:LimiteTemps0} 
 reads $\widetilde{P}_\Psi (f) = f$. Recall that $T_\Psi$ is the first positive time for which the Markov chain hits $\Psi$. 
 From the point of view of Markov operators, this means that $P_\Psi = P \widetilde{P}_\Psi$, and:
 \begin{equation*}
  P_\Psi (f) 
  = P (\widetilde{P}_\Psi (f))
  = P (f)
  = f-g.
 \end{equation*}
 In particular, this equation holds on $\Psi$, which is the conclusion of the proposition.
\end{proof}

%

Let $(A, \mathm, T)$ be a dynamical system, with $A$ Polish, and $\mathm$ a $\sigma$-finite, $T$-invariant, recurrent measure. 
The transformation $T$ gives rise to a recurrent transition kernel, which we will also denote by $T$:
\begin{equation*}
 T_x 
 := \delta_{T(x)},
\end{equation*}
such that the operator $T$ acting on $\Lbb^p (A, \mathm)$ is just the Koopman operator:
\begin{equation*}
 T (f) (x)
 = f \circ T (x).
\end{equation*}
Given any $B \subset A$ with positive measure, the transition kernel of the induced Markov chain 
is $T_B$. When applied to $T$, Proposition~\ref{prop:InvarianceMarkov} tells us that, 
for $f$, $g \in \Lbb^p (A, \mathm)$ with $g$ supported on $B$, if $f-f\circ T = g$, then $f-f \circ T_B = g$. 
This result was mentioned in Remark~\ref{rmk:Cobords}.

\smallskip

The transfer operator $\Lcal$ is then the Markov kernel dual to $T$. 
For instance, if $T$ is invertible, $\Lcal$ is the Koopman operator for $T^{-1}$; if $T$ has countably 
many branches, 
\begin{equation*}
 \Lcal (f) (x) 
 = \sum_{y \in T^{-1} (\{x\})} \frac{f(y)}{\Jac_{\mathm} (y)},
\end{equation*}
so the transition kernel corresponding to $\Lcal$ is:
\begin{equation*}
 \Lcal_x 
 = \sum_{y \in T^{-1} (\{x\})} \frac{1}{\Jac_{\mathm} (y)} \delta_y.
\end{equation*}
The measure $\mu$ is stationary for $\Lcal$ and, by Lemma~\ref{lem:RecurrenceDual}, 
it is also recurrent. Let $B$ have positive measure. Then $\Lcal_B$, defined as the dual of $T_B$, 
is also the Markov transition kernel induced by $\Lcal$ on $B$.
The case $p = \infty$ of Proposition~\ref{prop:InvarianceInduction} follows immediately.

\smallskip

Overall, the use of Lemma~\ref{lem:RecurrenceDual} makes this proof quite similar to, 
and not much shorter than, the direct proof of Proposition~\ref{prop:InvarianceInduction}. 
While it requires more background on Markov chains, seeing $\Lcal$ as the transition kernel 
of a stochastic process has the advantage of allowing the explicit use of backwards stopping 
times, which makes some manipulations easier.

\smallskip

Finally, let us recall the classical result asserting that solutions of the Poisson equation on $\Psi$ 
can be extended to solutions of the Poisson equation on $\Omega$, and highlight its relation with Lemma~\ref{lem:PoissonExtension}.

\begin{lemma}
\label{lem:ExtensionMarkov}
 
 Let $P$ be a transition kernel on $\Omega$ with invariant measure $\mu$. Assume that $\mu$ is $\sigma$-finite and recurrent. 
 Choose $\Psi \subset \Omega$ with $\mu (\Psi) >0$.
 
 \smallskip
 
 Let $f_\Psi \in \Lbb^\infty (\Psi, \mu)$, $g \in \Lbb^\infty (\Omega, \mu)$ be such that $g \equiv 0$ on $\Psi^c$ and:
 \begin{equation*}
 (I-P_\Psi) f_\Psi = g_{|\Psi}.
 \end{equation*}
 Then there is a unique function $f \in \Lbb^\infty (\Omega, \mu)$ such that $f_{|\Psi} = f_\Psi$ and:
 \begin{equation*}
 (I-P) f = g.
 \end{equation*}
 In addition, $\norm{f}{\Lbb^\infty (\Omega, \mu)} = \norm{f_\Psi}{\Lbb^\infty (\Psi, \mu)}$.
\end{lemma}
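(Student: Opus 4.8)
\textbf{Proof plan for Lemma~\ref{lem:ExtensionMarkov}.}

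The plan is to construct the extension $f$ explicitly by following orbits of the Markov chain until they first hit $\Psi$, exactly mirroring the construction in the proof of Lemma~\ref{lem:PoissonExtension}, but now in the probabilistic language. First, I would decompose $\Omega$ according to the first hitting time $\widetilde{T}_\Psi := \inf\{n \geq 0: X_n \in \Psi\}$ of $\Psi$. By recurrence (and the argument already used in the proof of Proposition~\ref{prop:InvarianceMarkov}), the set $\Psi^\infty := \{\widetilde{T}_\Psi < +\infty \text{ a.s.}\}$ is invariant, contains $\Psi$, and on its complement no constraint links $f$ to $f_\Psi$; so one reduces to the case $\widetilde{T}_\Psi < +\infty$ almost surely. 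Then one \emph{defines} $f(\omega) := \Ebb_\omega(f_\Psi(X_{\widetilde{T}_\Psi}))$, i.e.\ $f := \widetilde{P}_\Psi(\iota f_\Psi)$ where $\widetilde{P}_\Psi$ is the kernel of $(X_0, X_{\widetilde{T}_\Psi}, X_{\widetilde{T}_\Psi}, \ldots)$ as in Proposition~\ref{prop:InvarianceMarkov}, with $\iota$ the trivial extension of $f_\Psi$ by $0$ on $\Psi^c$. Since $f_\Psi \in \Lbb^\infty(\Psi,\mu)$, this $f$ lies in $\Lbb^\infty(\Omega,\mu)$ and clearly $\norm{f}{\Lbb^\infty(\Omega,\mu)} \leq \norm{f_\Psi}{\Lbb^\infty(\Psi,\mu)}$; equality holds since $f_{|\Psi} = f_\Psi$.

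Next I would verify $(I-P)f = g$. On $\Psi^c$ one has $g \equiv 0$, and by the strong Markov property applied at the first step, $\Ebb_\omega(f_\Psi(X_{\widetilde{T}_\Psi})) = \Ebb_\omega\big(\Ebb_{X_1}(f_\Psi(X_{\widetilde{T}_\Psi}))\big) = P(f)(\omega)$ for $\omega \in \Psi^c$ (the first step does not yet land in $\Psi$, so the hitting time simply shifts), giving $f = P(f) = P(f) + g$ there. On $\Psi$, one has $\widetilde{T}_\Psi = 0$ so $f_{|\Psi} = f_\Psi$; moreover $T_\Psi = \inf\{n > 0: X_n \in \Psi\}$ satisfies, at the level of operators, $P_\Psi = P\widetilde{P}_\Psi$ (wait one step, then run to the next visit), whence $P_\Psi(f_\Psi) = P(\widetilde{P}_\Psi(\iota f_\Psi)) = P(f)$; combining with the hypothesis $(I-P_\Psi)f_\Psi = g_{|\Psi}$ gives $P(f)_{|\Psi} = f_\Psi - g_{|\Psi} = (f - g)_{|\Psi}$, i.e.\ $f = P(f) + g$ on $\Psi$ as well. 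Hence $(I-P)f = g$ almost everywhere.

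Finally, uniqueness: if $f_1, f_2$ are two bounded solutions agreeing with $f_\Psi$ on $\Psi$, then $(I-P)(f_1 - f_2) = 0$ with $f_1 - f_2$ supported (as a potential obstruction) away from $\Psi$; applying the $\Lbb^\infty$ version of the Liouville-type statement — either Lemma~\ref{lem:Liouville} transported through the dynamical realisation of the Markov chain (Subsection~\ref{subsec:DynamiqueToMarkov}), or directly the fact that a bounded $P$-harmonic function is constant along the recurrent chain and hence determined by its values on $\Psi$ — forces $f_1 = f_2$. The main obstacle is the careful bookkeeping of stopping times: distinguishing $\widetilde{T}_\Psi$ (including time $0$) from $T_\Psi$ (strictly positive), justifying the operator identity $P_\Psi = P\widetilde{P}_\Psi$ via the strong Markov property, and handling the case $\mu(\Psi) = +\infty$ or the non-recurrent part $\Psi^\infty{}^c$ — all of which are routine given the hypotheses and the arguments already deployed in the proof of Proposition~\ref{prop:InvarianceMarkov}, so the proof is essentially a reassembly of those pieces.
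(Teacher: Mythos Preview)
Your proposal is correct and follows essentially the same route as the paper's proof: define $f(\omega) := \Ebb_\omega(f_\Psi(X_{\widetilde{T}_\Psi}))$, check $(I-P)f = g$ separately on $\Psi^c$ (where $P f = \widetilde{P}_\Psi f = f$) and on $\Psi$ (via $P_\Psi = P\widetilde{P}_\Psi$), and deduce uniqueness from the Liouville-type argument of Lemma~\ref{lem:Liouville}. The only cosmetic difference is that you spell out the reduction to $\Psi^\infty$ explicitly, whereas the paper leaves this implicit (having already handled it in the proof of Proposition~\ref{prop:InvarianceMarkov}).
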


\begin{proof}
 
 Let $f_\Psi$ and $g$ be as in the hypotheses of the lemma.  As in the proof of Proposition~\ref{prop:InvarianceMarkov}, 
 let $\widetilde{T}$ be the stopping time $\inf \{n \geq 0: \ X_n \in \Psi\}$.
 For $\omega \in \Omega$, let:
 \begin{equation*}
  f (\omega) 
  := \Ebb_\omega (f(X_{\widetilde{T}})).
 \end{equation*}
 Note that $f_{|\Psi} \equiv f_\Psi$, and $\norm{f}{\Lbb^\infty (\Omega, \mu)} = \norm{f_\Psi}{\Lbb^\infty (\Psi, \mu)}$.
 
 \smallskip
 
 As in the proof of Proposition~\ref{prop:InvarianceMarkov}, we have $\widetilde{P}_\Psi (f) \equiv f$ almost everywhere. 
 On $\Psi^c$, we also have $P (f) \equiv \widetilde{P}_\Psi (f) \equiv f$, so $(I-P) (f) = 0 = g$. On $\Psi$, 
 \begin{equation*}
  P (f)_{|\Psi}
  = P (\widetilde{P}_\Psi (f))_{|\Psi}
  = P_\Psi (f_{|\Psi})
  = P_\Psi (f_\Psi)
  = f_{|\Psi}-g_{|\Psi}.
 \end{equation*}
 Hence, $(I-P) (f) = g$ almost everywhere.
 
 \smallskip
 
 Uniqueness follows from the same argument as in Lemma~\ref{lem:Liouville}, for instance by encoding the Markov chain.
\end{proof}

If applied to the Markov chain with transition kernel $\Lcal$ on $A$, Lemma~\ref{lem:ExtensionMarkov} 
immediately yields Lemma~\ref{lem:PoissonExtension}.

%

\section{Poisson equation and Green-Kubo formula}
\label{sec:GreenKubo}

Given an ergodic and recurrent dynamical system $(A, \mathm, T)$ and a function $f : A \to \R$, 
a set $B\subset A$ with positive measure, define the function $\Sigma_Bf :B\to\R$ as the sum 
of the values of $f$ over the excursion out of $B$:
\begin{equation*}
 \Sigma_B (f) (x) 
 := \sum_{k=0}^{\varphi_B (x)-1} f(T^k (x))\, .
\end{equation*}

Let $f \in \Lbb^1 (A, \mathm)$. Then $\Sigma_B(f) \in \Lbb^1 (B, \mathm_{|B})$, with integral 
\begin{equation}
\label{eq:InvarianceIntegrale}
 \int_B \Sigma_B (f) \dd \mathm 
 = \int_A f \dd \mathm.
\end{equation}
This equation is a classical extension of Kac's formula, which admits multiple proofs. 
The most direct one proceeds along the lines of Proposition~\ref{prop:InvarianceInduction}. 
It can also be proved by looking at the limit behaviour of Birkhoff sums, which shall be 
the object of Section~\ref{sec:Distributions}. Finally, in some cases, it can be recovered by 
using a well-chosen coboundary. For $x \in A$, let:
\begin{equation}
 \label{eq:DefinitionCobord}
 C(f) (x) 
 := \left\{ 
 \begin{array}{lll}
  0 & \text{ if } & x \in B \\
  \sum_{k=0}^{\varphi_B (x)-1} f \circ T^k (x) & \text{ if } & x \notin B
 \end{array}
 \right. .
\end{equation}
Then 
\begin{equation}
\label{eq:Cohomologue}
\Sigma_B (f) \mathbf{1}_B - f = C(f) \circ T - C(f)\, .
\end{equation}
In particular, if $C(f) \in \Lbb^1 (A, \mathm)$, 
integrating over $A$ yields Equation~\eqref{eq:InvarianceIntegrale}.

\smallskip

Let $f \in \Lbb^1 (A, \mathm) \cap \Lbb^2 (A, \mathm)$ with null $\mathm$-integral. 
We set, when it is well defined, the Green-Kubo formula:
\begin{align*}
 \sigma^2_{GK} (A, \mathm, T; f,f) 
 & := \int_A f^2 \dd \mathm + 2 \sum_{n \geq 1} \int_A f \cdot f\circ T^n \dd \mathm \\
 & = \int_A f^2 \dd \mathm + 2 \sum_{n \geq 1} \int_A f \cdot \Lcal^n (f) \dd \mathm.
\end{align*}
These quantities $ \sigma^2_{GK} (A, \mathm, T; f,f)$ appear in limit theorems whether $\mathm$ is finite 
\cite[Equation~2.21]{Nagaev:1957} or infinite~\cite[Theorem~2.4]{PeneThomine:2017}. 
By~\cite[Corollary~1.13 and Appendix]{PeneThomine:2017}, the Green-Kubo formula, in some cases, 
satisfies the same kind of invariance under induction as the integral~:
\begin{equation*}
 \sigma^2_{GK} (A, \mathm, T; f,f) 
 = \sigma^2_{GK} (B, \mathm_{|B}, T_B ; \Sigma_B (f),\Sigma_B (f))\, ,
\end{equation*}
making it a degree $2$ analog of Equation~\eqref{eq:InvarianceIntegrale}. 
This observation was central in~\cite{PeneThomine:2017}, as it allowed the estimation 
of hitting probabilities in a diffusion model. 
The goal of this section is to prove rigorously this equality in some cases, 
and show the relation between the invariance under induction of 
$\sigma_{GK}^2$ and Proposition~\ref{prop:InvarianceInduction}.
We shall study its associated symmetric bilinear form:
\begin{equation*}
 \sigma^2_{GK} (A, \mathm, T; f,g) 
 = \int_A fg \dd \mathm + \sum_{n \geq 1} \int_A f \Lcal^n (g) \dd \mathm + \sum_{n \geq 1} \int_A g \Lcal^n (f) \dd \mathm\, .
\end{equation*}
We shall also use a notion of mixing:

\begin{definition}[Mixing of functions]
 
 Let $f,g$ be two functions defined on $(A,\mathm)$. We say that 
 the dynamical system $(A,\mathm,T)$ \emph{mixes} $(f,g)$ if the quantities 
 $\int_A g\cdot \Lcal^n f\dd\mathm = \int_Ag\circ T^n\cdot  f\dd\mathm$ and 
 $\int_Af\cdot \Lcal^n g\dd\mathm = \int_Af\circ T^n\cdot  g\dd\mathm$ are well-defined and 
 converge to $\alpha(f,g) = \alpha (g,f)$ as $n$ goes to infinity.
\end{definition}

For instance, if $\mathm$ is a probability measure and $(A,\mathm,T)$ is mixing, then 
$(A,\mathm,T)$ mixes any pair of square-integrable functions, with $\alpha(f,g) = \int_A f \dd \mathm \cdot \int_A g \dd \mathm$.

\smallskip

The definition of Green-Kubo's formula can be widened, for instance by allowing 
the infinite sums to converge only in Abel's sense. With such a modification, the mixing conditions 
in this section can be lifted. However, working with absolutely converging series will be helpful 
in Section~\ref{sec:Degre3}.

\begin{theorem}
\label{theo:InvarianceGreenKubo}

Let  $(A,\mathm,T)$ be a recurrent, ergodic, measure preserving dynamical system, 
with $\mathm$ a $\sigma$-finite measure and $B\subset A$ with $0< \mathm(B) \leq +\infty$.
Let $p_1$, $p_2\in[1,+\infty]$ be such that $\frac{1}{p_1}+\frac{1}{p_2}=1$.
Let $f_1$, $f_2$ be two integrable functions defined on $A$ with null integral, 
such that $\sigma^2_{GK} (A, \mathm, T; f_1,f_2)$ is well defined and such that
%
\begin{equation}
 \sum_{n\geq 0} \norm{\mathbf{1}_B \Lcal^n\left(\Sigma_B(f_i)\mathbf{1}_B\right)}{\Lbb^{p_i}(A,\mathm)}
 <+\infty
\end{equation}
and
\begin{equation}
 \label{eq:hypB}
 \sum_{n\ge 0} \norm{\Lcal_B^n\Sigma_B(f_i)}{\Lbb^{p_i}(B,\mathm_{|B})}
 <+\infty\, ,
\end{equation}
for $i=1$, $2$. Assume moreover that $(A,\mathm,T)$ mixes $(C(f_1),f_2)$ and $(\mathbf{1}_B\Sigma_B(f_1),C(f_2))$.
Then the following quantities are well defined and equal:
\begin{equation}
 \label{invarianceinduction}
 \sigma^2_{GK} (A, \mathm, T; f_1,f_2) 
 = \sigma^2_{GK} (B, \mathm_{|B}, T_B ; \Sigma_B (f_1), \Sigma_B (f_2))\, .
\end{equation}
\end{theorem}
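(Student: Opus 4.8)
The strategy is to reduce the statement for the quadratic/bilinear form $\sigma^2_{GK}$ to the already-established invariance under induction of the \emph{linear} Poisson equation, namely Proposition~\ref{prop:InvarianceInduction} (and its weighted companion Lemma~\ref{lem:InvarianceInductionPonderee} is not needed here, but Lemma~\ref{lem:PoissonExtension} on extension of solutions is). The key observation is that $\sigma^2_{GK}(A,\mathm,T;f_1,f_2)$ can be rewritten in terms of a solution of a Poisson equation: if $u_i$ solves $(I-\Lcal)u_i = f_i$ (formally $u_i = \sum_{k\ge 0}\Lcal^k f_i$), then a summation-by-parts / telescoping computation gives
\begin{equation*}
 \sigma^2_{GK}(A,\mathm,T;f_1,f_2) = \int_A \big(u_1 f_2 + u_2 f_1 - f_1 f_2\big)\dd\mathm\, ,
\end{equation*}
and symmetrically on $B$ with $\Sigma_B(f_i)$ in place of $f_i$ and $\Lcal_B$ in place of $\Lcal$. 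So the first step is to make this rewriting rigorous under the absolute-convergence hypotheses \eqref{eq:hypB} and its global analogue, checking that the relevant series converge in the appropriate $\Lbb^{p_i}$ spaces and that the pairings are legitimate (H\"older with the conjugate exponents $p_1,p_2$).

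Second, I would connect the two sides via the coboundary identity \eqref{eq:Cohomologue}: $\Sigma_B(f_i)\mathbf{1}_B - f_i = C(f_i)\circ T - C(f_i)$. The point is that $\Sigma_B(f_i)\mathbf{1}_B$ and $f_i$ are cohomologous (with transfer function $C(f_i)$), hence they have the same Green-Kubo form \emph{provided the relevant mixing holds} — this is exactly where the hypotheses that $(A,\mathm,T)$ mixes $(C(f_1),f_2)$ and $(\mathbf{1}_B\Sigma_B(f_1),C(f_2))$ enter, since adding a coboundary $v\circ T - v$ to $f_1$ changes $\sigma^2_{GK}(A,\mathm,T;f_1,f_2)$ by boundary terms of the form $\lim_n \int_A v\circ T^n\cdot f_2\dd\mathm$ and $\lim_n\int_A v\cdot f_2\circ T^n \dd\mathm$, which vanish or cancel precisely when those pairs are mixed. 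So the intermediate goal is the identity $\sigma^2_{GK}(A,\mathm,T;f_1,f_2) = \sigma^2_{GK}(A,\mathm,T;\mathbf{1}_B\Sigma_B(f_1),\mathbf{1}_B\Sigma_B(f_2))$, reducing to functions supported on $B$.

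Third, for functions supported on $B$, one applies Proposition~\ref{prop:InvarianceInduction}: if $U_i$ is the ($\Lbb^\infty$- or $\Lbb^{p_i}$-) solution of $(I-\Lcal)U_i = \mathbf{1}_B\Sigma_B(f_i)$ on $A$, then $(I-\Lcal_B)(U_i)_{|B} = (\Sigma_B(f_i))_{|B}$ on $B$, so $(U_i)_{|B}$ is a solution on $B$ of the induced Poisson equation. Then the rewriting from the first step, together with Equation~\eqref{eq:InvarianceInductionIntegrale} (integration against a mean-zero observable supported on $B$ kills the constant ambiguity in the inverse), lets one match
\begin{equation*}
 \int_A\big(U_1\,\mathbf{1}_B\Sigma_B(f_2) + U_2\,\mathbf{1}_B\Sigma_B(f_1) - \mathbf{1}_B\Sigma_B(f_1)\,\mathbf{1}_B\Sigma_B(f_2)\big)\dd\mathm
\end{equation*}
with the corresponding expression on $(B,\mathm_{|B},T_B)$ built from $(U_i)_{|B}$ and $\Sigma_B(f_i)$; the two summability hypotheses guarantee both expressions are the convergent Green-Kubo forms $\sigma^2_{GK}(A,\mathm,T;\mathbf{1}_B\Sigma_B(f_1),\mathbf{1}_B\Sigma_B(f_2))$ and $\sigma^2_{GK}(B,\mathm_{|B},T_B;\Sigma_B(f_1),\Sigma_B(f_2))$ respectively. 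Chaining the three steps gives \eqref{invarianceinduction}.

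\textbf{Main obstacle.} The delicate point is the bookkeeping of convergence and the justification of exchanging limits, sums and integrals in the infinite-measure, merely $\sigma$-finite setting — in particular, that the ``boundary terms'' produced when substituting the coboundary \eqref{eq:Cohomologue} genuinely vanish or telescope, which is not automatic and is exactly what the mixing hypotheses on $(C(f_1),f_2)$ and $(\mathbf{1}_B\Sigma_B(f_1),C(f_2))$ are there to control; one must track which pairing each surviving term belongs to and verify it is covered by the stated mixing assumptions rather than some uncontrolled one. A secondary technical nuisance is ensuring $\Sigma_B(f_i)\in\Lbb^{p_i}(B,\mathm_{|B})$ and that $C(f_i)$ is in a space where the cohomology argument is valid; these do not follow from $f_i\in\Lbb^1\cap\Lbb^2$ alone, so one leans on \eqref{eq:hypB} and its global counterpart, and possibly on Lemma~\ref{lem:PoissonExtension} to get the $\Lbb^\infty$ extension $U_i$ with controlled norm. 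Once the convergences are nailed down, the algebra is the routine summation-by-parts identity for $\sigma^2_{GK}$ in terms of Poisson solutions.
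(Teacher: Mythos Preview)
Your plan is correct and is essentially the paper's own proof: the paper first proves coboundary-invariance of $\sigma^2_{GK}$ (your Step~2, formalised as Proposition~\ref{prop:GKCobord3} via Lemmas~\ref{lem:GKCobord1}--\ref{lem:GKCobord2}, using exactly the two mixing hypotheses you identified), and then for $B$-supported observables rewrites $\sigma^2_{GK}$ as $\int_B g_j G_i\dd\mathm$ with $G_i=\sum_{n\ge 0}\Lcal^n(\Sigma_B(f_i)\mathbf{1}_B)$ and applies Proposition~\ref{prop:InvarianceInduction} together with Lemma~\ref{lem:Liouville} to replace $G_i$ by $\sum_{n\ge 0}\Lcal_B^n\Sigma_B(f_i)$ up to a constant killed by the zero-mean pairing (your Steps~1 and~3, formalised as Lemma~\ref{lem:sigma2induct}). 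The only minor deviation is that Lemma~\ref{lem:PoissonExtension} is not needed: the global Poisson solution $G_i$ is built directly from the first summability hypothesis rather than extended from $B$.
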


If $f_1$ or $f_2$ has non zero integral, then an error term may appear.

\begin{remark}
\label{rem:InvarianceGreenKubo}

Assume that the conclusion~\eqref{invarianceinduction} of Theorem~\ref{theo:InvarianceGreenKubo} holds true, 
and that the quantity $\sigma^2_{GK} (B, \mathm_{|B}, T_B ; \varphi_B, \Sigma_B (f_2))$ is well defined. 
Then, for every $c\in\R$,
\begin{align*}
 \sigma^2_{GK} (A, \mathm, T; f_1+c,f_2) 
 & = \sigma^2_{GK} (A, \mathm, T; f_1,f_2) \\
 & = \sigma^2_{GK} (B, \mathm_{|B}, T_B ; \Sigma_B (f_1), \Sigma_B (f_2))\\
 & = \sigma^2_{GK} (B, \mathm_{|B}, T_B ; \Sigma_B (f_1+c), \Sigma_B (f_2)) - c \,\sigma^2_{GK} (B, \mathm_{|B}, T_B ; \varphi_B, \Sigma_B (f_2))\, .
\end{align*}
\end{remark}

The rest of this section is devoted to the proof of Theorem~\ref{theo:InvarianceGreenKubo},
which is divided in two parts corresponding to the two following sections:
\begin{itemize}
\item invariance of $\sigma^2_{GK}$ under addition of coboundaries. Then, by Equation~\eqref{eq:Cohomologue}, 
a function $f$ is cohomologous to $\Sigma_B (f) \mathbf{1}_B$, ensuring that 
\begin{equation*}
 \sigma^2_{GK} (A, \mathm, T; f_1,f_2) 
 = \sigma^2_{GK} (A, \mathm, T; \Sigma_B(f_1)\mathbf{1}_B,\Sigma_B(f_2)\mathbf{1}_B)\, .
\end{equation*}
\item invariance under induction for functions supported on $B$, ensuring that
\begin{equation*}
 \sigma^2_{GK} (A, \mathm, T;\Sigma_B (f_1) \mathbf{1}_B, \Sigma_B (f_2) \mathbf{1}_B)
 =\sigma^2_{GK} (B, \mathm_{|B}, T_B; \Sigma_B (f_1), \Sigma_B (f_2))\, ;
\end{equation*}
for this step we will use Proposition~\ref{prop:InvarianceInduction} together with Lemma~\ref{lem:Liouville}, 
yielding 
\begin{equation*}
 \int_B \Sigma_B(f_i) \sum_{n\ge 0} \Lcal^n\left(\Sigma_B(f_j)\mathbf{1}_B\right)\dd\mathm
 = \int_B \Sigma_B(f_i) \sum_{n\geq 0}\Lcal_B^n \left(\Sigma_B(f_j)\right)\dd\mathm \, .
\end{equation*}
\end{itemize}

\subsection{Invariance of \texorpdfstring{$\sigma_{GK}^2$}{Green-Kubo's formula} under addition of coboundaries}

The goal of this subsection is to prove that under the mixing assumptions of Theorem~\ref{theo:InvarianceGreenKubo},
\begin{equation}
 \label{invcob}
 \sigma^2_{GK} (A, \mathm, T; f_1,f_2) 
 = \sigma^2_{GK} (A, \mathm, T; \Sigma_B (f_1) \mathbf{1}_B, \Sigma_B (f_2) \mathbf{1}_B).
\end{equation}
To this end we prove the following general proposition.

\begin{proposition}
\label{prop:GKCobord3}
Let $(A,\mathm,T)$ be a measure preserving dynamical system, with $\mathm$ a $\sigma$-finite measure.
Let $f$, $g$ be two functions defined on $A$ such that $\sigma^2_{GK} (A, \mathm, T; f,g) $ 
is well defined. Let $u$ be a function defined on $A$ such that
$(A,\mathm,T)$ mixes $(u,g)$. Then $\sigma^2_{GK} (A, \mathm, T; f+u\circ T-u,g)$
is well defined and
\begin{equation}
 \sigma^2_{GK} (A, \mathm, T; f,g)
 = \sigma^2_{GK} (A, \mathm, T; f+u\circ T-u,g)\, .
\end{equation}
\end{proposition}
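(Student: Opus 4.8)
The goal is to show that adding a coboundary $u\circ T-u$ to the first argument of $\sigma^2_{GK}(A,\mathm,T;\cdot,g)$ does not change its value, under the sole assumption that $(A,\mathm,T)$ mixes $(u,g)$. Since $\sigma^2_{GK}$ is bilinear (formally), it suffices to prove that $\sigma^2_{GK}(A,\mathm,T;u\circ T-u,g)=0$, where this expression should be interpreted as the appropriate (Abel- or Cesàro-)convergent sum. First I would expand
\begin{equation*}
 \sigma^2_{GK}(A,\mathm,T;u\circ T-u,g)
 = \int_A (u\circ T-u)\,g\dd\mathm + \sum_{n\geq 1}\int_A (u\circ T-u)\,\Lcal^n g\dd\mathm + \sum_{n\geq 1}\int_A g\,\Lcal^n(u\circ T-u)\dd\mathm\, ,
\end{equation*}
and rewrite each term using the duality $\int_A \Lcal(h)\cdot k\dd\mathm=\int_A h\cdot k\circ T\dd\mathm$ and the identity $\Lcal(u\circ T\cdot h)=u\cdot\Lcal(h)$, equivalently $\Lcal^n(u\circ T)=u\cdot\Lcal^{n-1}(u)$-type manipulations applied to $g\circ T^n$ versus $g\circ T^{n+1}$.

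\textbf{Telescoping.} The key computation is that the partial sums telescope. Writing everything in terms of correlations $c_n:=\int_A u\cdot g\circ T^n\dd\mathm=\int_A g\cdot\Lcal^n u\dd\mathm$ (for $n\geq 0$) and $c_{-n}:=\int_A g\cdot u\circ T^n\dd\mathm=\int_A u\cdot\Lcal^n g\dd\mathm$, one finds $\int_A(u\circ T-u)\cdot g\circ T^n\dd\mathm = c_{-(n-1)}-c_{-n}$ for the terms coming from $\Lcal^n g$ (using $\int_A u\circ T\cdot g\circ T^n\dd\mathm = \int_A u\cdot g\circ T^{n-1}\dd\mathm$ by $T$-invariance), and similarly the terms $\int_A g\cdot\Lcal^n(u\circ T-u)\dd\mathm$ produce differences of the $c_n$'s. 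Summing $n$ from $1$ to $N$, the interior terms cancel pairwise, leaving only boundary contributions: an $n=0$ term $\int_A(u\circ T-u)g\dd\mathm$ that combines with the first correlations, and tail terms of the form $\pm c_{\pm N}$ (or $\pm c_{\pm(N+1)}$). The precise bookkeeping — matching the $n=0$ term, the $\sum_{n\geq 1}\int_A(u\circ T-u)\Lcal^n g$ block, and the $\sum_{n\geq 1}\int_A g\,\Lcal^n(u\circ T-u)$ block — is the step requiring the most care, but it is a finite telescoping identity valid for each $N$ before passing to the limit.

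\textbf{Taking the limit.} Having reduced the $N$-th partial sum to boundary terms $c_{-(N-\varepsilon)}$, $c_{N-\varepsilon'}$ for small integer shifts $\varepsilon,\varepsilon'$, I then invoke the hypothesis that $(A,\mathm,T)$ mixes $(u,g)$: this says precisely that $c_n$ and $c_{-n}$ both converge to the same limit $\alpha(u,g)$ as $n\to+\infty$. Hence every surviving boundary term of the form $c_{\pm N}$ tends to $\alpha(u,g)$, and when one counts signs, the positive-shift boundary contribution and the negative-shift boundary contribution cancel, so the whole expression tends to $0$. Therefore $\sigma^2_{GK}(A,\mathm,T;u\circ T-u,g)=0$, and by bilinearity $\sigma^2_{GK}(A,\mathm,T;f+u\circ T-u,g)=\sigma^2_{GK}(A,\mathm,T;f,g)$; the well-definedness of the left-hand side follows from that of $\sigma^2_{GK}(A,\mathm,T;f,g)$ together with the convergence just established for the coboundary part.

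\textbf{Anticipated obstacle.} The main delicacy is not conceptual but organizational: one must keep careful track of which index shift (by $0$, $1$, or $2$) appears in each of the three blocks, ensure the $\int_A(u\circ T-u)g\dd\mathm$ term is correctly absorbed, and verify that the two divergent-looking pieces $\sum\int g\,\Lcal^n(u\circ T)$ and $\sum\int g\,\Lcal^n u$ are each individually only conditionally controlled — so the telescoping must be performed on the combined partial sums rather than term by term. I would therefore write the proof by fixing $N$, displaying the exact finite identity for $\sum_{n=0}^N$, and only then letting $N\to+\infty$ using the mixing hypothesis. A minor additional check is that all integrals in sight are finite for each fixed $n$, which is immediate from $\sigma^2_{GK}(A,\mathm,T;f,g)$ being well defined (so the correlations of $g$ against iterates are defined) and from $(u,g)$ being mixed (so the $c_n$, $c_{-n}$ are defined).
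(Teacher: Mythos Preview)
Your proposal is correct and follows essentially the same route as the paper: reduce by bilinearity to $\sigma^2_{GK}(A,\mathm,T;u\circ T-u,g)=0$, telescope each of the two series to leave only boundary terms $\int_A g\,\Lcal^N u\dd\mathm$ and $\int_A u\,\Lcal^{N+1} g\dd\mathm$, and use the mixing hypothesis to conclude that both tend to $\alpha(u,g)$ and hence cancel. The only organizational difference is that the paper isolates the two telescoping computations as separate lemmas (one for $\sum_{n\geq 0}\int_A g\,\Lcal^n(u\circ T-u)\dd\mathm$, one for $\sum_{n\geq 0}\int_A(u\circ T-u)\,\Lcal^n g\dd\mathm$), each yielding an explicit closed form, before combining them.
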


Note that we do not assume in this result that the functions have null integral.
This result will appear as a direct consequence of the two following lemmas.

\begin{lemma}
\label{lem:GKCobord1}

Let $(A,\mathm,T)$ be a measure preserving dynamical system, with $\mathm$ a $\sigma$-finite measure.
Let $g$ and $u$ be two functions defined on $A$ such that $\int_A g \cdot \Lcal^n u\dd \mathm=\int_Ag\circ T^n \cdot u\dd \mathm$ 
is well defined for any $n\ge 0$ and converges to $\alpha(g,u)$ when $n$ goes to infinity. Then
\begin{equation}
 \label{eq:GKCobord1}
 \sum_{n \geq 0} \int_A g \Lcal^n (u \circ T-u) \dd \mathm 
 = \int_A g \cdot u \circ T \dd \mathm-\alpha(g,u).
\end{equation}
\end{lemma}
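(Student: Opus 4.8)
The statement is a telescoping identity, and the plan is to exploit the algebraic identity $u\circ T - u = (\Lcal^{-1}$-like shift$)$ only on the level of the pairings against $g$. Concretely, for each $n\ge 0$ we rewrite
\begin{equation*}
 \int_A g\cdot \Lcal^n(u\circ T - u)\dd\mathm
 = \int_A g\circ T^n \cdot (u\circ T - u)\dd\mathm
 = \int_A g\circ T^n\cdot u\circ T\dd\mathm - \int_A g\circ T^n\cdot u\dd\mathm\, ,
\end{equation*}
using that $\Lcal$ is the dual of the Koopman operator. The first term on the right equals $\int_A g\circ T^{n+1}\cdot u\dd\mathm$ because $\mathm$ is $T$-invariant (so composing both factors with $T$ does not change the integral). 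Writing $a_n := \int_A g\circ T^n\cdot u\dd\mathm = \int_A g\cdot\Lcal^n u\dd\mathm$, the $n$-th summand is exactly $a_{n+1}-a_n$.

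First I would check the well-definedness claims: by hypothesis each $a_n$ is well defined, and the telescoping shows each summand $\int_A g\,\Lcal^n(u\circ T-u)\dd\mathm = a_{n+1}-a_n$ is well defined (no absolute convergence issue at this stage, since it is just a difference of two well-defined finite quantities). Then the partial sums of the series on the left of \eqref{eq:GKCobord1} are
\begin{equation*}
 \sum_{n=0}^{N} (a_{n+1}-a_n) = a_{N+1} - a_0\, .
\end{equation*}
Since $a_n \to \alpha(g,u)$ by the mixing hypothesis, the series converges and its sum is $\alpha(g,u) - a_0 = \alpha(g,u) - \int_A g\cdot u\dd\mathm$. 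Comparing with the claimed right-hand side $\int_A g\cdot u\circ T\dd\mathm - \alpha(g,u)$, I would note that $\int_A g\cdot u\circ T\dd\mathm = a_1$ is \emph{not} in general equal to $a_0$; rather the two sides match only if one reads the first summand as starting the telescoping one step in. Indeed, re-examining: the summand for $n=0$ is $a_1 - a_0 = \int_A g\cdot u\circ T\dd\mathm - \int_A g\cdot u\dd\mathm$, and $\sum_{n=0}^N(a_{n+1}-a_n) = a_{N+1}-a_0$; but the paper's RHS is $a_1 - \alpha(g,u)$. So in fact I would re-derive using $\sum_{n\ge 0}(a_{n+1}-a_n)$ telescoped as $\lim_N a_{N+1} - a_0 = \alpha - a_0$; to get $a_1 - \alpha$ one instead telescopes $a_n - a_{n+1}$ or shifts indices — I would simply be careful to match the sign and offset conventions used in the Koopman/transfer duality of the paper (note $\int_A g\cdot u\circ T\dd\mathm = \int_A \Lcal g\cdot u\dd\mathm$, not $\int_A g\cdot \Lcal u\dd\mathm$), and adjust the telescoping direction accordingly so the identity comes out as stated.

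\textbf{Main obstacle.} The only real subtlety is bookkeeping: making sure the duality is used in the correct direction (the summand $\int_A g\,\Lcal^n(u\circ T-u)\dd\mathm$ must be turned into a difference of two terms of the \emph{same} sequence $(a_n)$, shifted by one, using $T$-invariance of $\mathm$ to absorb the extra $T$), and then checking that the convergence hypothesis $a_n\to\alpha(g,u)$ is exactly what licenses passing to the limit in the telescoped partial sums. There is no analytic difficulty — no need for dominated convergence or integrability of the series, since everything reduces to a finite telescoping sum plus one limit. I would present it in three short steps: (i) rewrite the summand via duality and $T$-invariance as $a_{n+1}-a_n$; (ii) telescope the partial sums; (iii) take $N\to\infty$ using the mixing hypothesis and identify the limit with $\int_A g\cdot u\circ T\dd\mathm - \alpha(g,u)$.
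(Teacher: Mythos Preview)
Your overall strategy---dualize, telescope, take the limit---is exactly what the paper does. However, the step where you invoke $T$-invariance is pointed in the wrong direction, and this is the source of the sign/offset confusion you flag but never actually resolve.

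Concretely: from $\int_A f\circ T\dd\mathm=\int_A f\dd\mathm$ applied to $f=g\circ T^{n-1}\cdot u$ you get, for $n\ge 1$,
\[
\int_A g\circ T^n\cdot u\circ T\dd\mathm=\int_A g\circ T^{n-1}\cdot u\dd\mathm=a_{n-1},
\]
\emph{not} $a_{n+1}$. (Your justification ``composing both factors with $T$ does not change the integral'' would turn $g\circ T^n\cdot u\circ T$ into $g\circ T^{n+1}\cdot u\circ T^2$, which is not what you want.) Consequently the $n$-th summand for $n\ge1$ is $a_{n-1}-a_n$, not $a_{n+1}-a_n$; the $n=0$ term stays as $\int_A g\cdot u\circ T\dd\mathm-a_0$ (and, as you correctly note in your parenthetical, $\int_A g\cdot u\circ T\dd\mathm$ is $\int_A\Lcal g\cdot u\dd\mathm$, not $a_1$). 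Telescoping then gives
\[
\sum_{n=0}^N=\Big(\int_A g\cdot u\circ T\dd\mathm-a_0\Big)+(a_0-a_N)=\int_A g\cdot u\circ T\dd\mathm-a_N,
\]
which converges to the stated right-hand side. This is precisely the paper's computation; once you correct the direction of the shift, no further ``adjusting'' is needed.
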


\begin{proof}
 
 Let $N \geq 0$. Then:
 \begin{align*}
  \sum_{n=0}^N \int_A g \Lcal^n (u \circ T-u) \dd \mathm 
  & = \sum_{n=0}^N \int_A g\circ T^n \cdot (u \circ T-u) \dd \mathm \\
  & = \int_A g (u \circ T-u) \dd \mathm + \sum_{n=1}^N \left( \int_A g \circ T^{n-1} \cdot u \dd \mathm - \int_A g \circ T^n \cdot u \dd \mathm  \right) \\ 
  & = \int_A g \cdot u \circ T \dd \mathm - \int_A g \circ T^N \cdot u \dd \mathm \\ 
  & = \int_A g \cdot u \circ T \dd \mathm - \int_A g \Lcal^N (u) \dd \mathm.
 \end{align*}
 Taking the limit as $N$ goes to infinity yields Equation~\eqref{eq:GKCobord1}. 
\end{proof}

\begin{lemma}
\label{lem:GKCobord2}
Let $(A,\mathm,T)$ be a measure preserving dynamical system, with $\mathm$ a $\sigma$-finite measure.
Let $g$ and $u$ be two functions defined on $A$ such that $\int_Au \cdot \Lcal^n g\dd \mathm=\int_1u\circ T^n \cdot g\dd\mathm$ is well defined for any $n\geq 1$ and
converges to $\alpha(g,u)$ when $n$ goes to infinity. Then 
 \begin{equation}
 \label{eq:GKCobord2}
  \sum_{n \geq 0} \int_A (u \circ T-u) \Lcal^n (g) \dd \mathm 
  = \alpha(g,u) - \int_A u g \dd \mathm \, .
 \end{equation}
\end{lemma}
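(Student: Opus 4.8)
The plan is to mimic the proof of Lemma~\ref{lem:GKCobord1}, but this time the coboundary $u\circ T - u$ sits on the \emph{other} side of the pairing, so the telescoping runs in the opposite direction. First I would fix $N\geq 0$ and write, using the duality $\int_A (u\circ T - u)\Lcal^n(g)\dd\mathm = \int_A (u\circ T-u)\circ T^n\cdot g\dd\mathm$ — wait, more precisely $\int_A (u\circ T - u)\cdot \Lcal^n(g)\dd\mathm = \int_A u\circ T^{n+1}\cdot g\dd\mathm - \int_A u\circ T^n\cdot g\dd\mathm$, because $\int_A \psi\cdot\Lcal^n(g)\dd\mathm = \int_A \psi\circ T^n\cdot g\dd\mathm$ by iterating the defining adjunction between $\Lcal$ and the Koopman operator. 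Here one must be slightly careful: the hypothesis is stated with $\int_A u\cdot\Lcal^n g\dd\mathm = \int_A u\circ T^n\cdot g\dd\mathm$ well defined for $n\geq 1$, and for $n=0$ the term $\int_A ug\dd\mathm$ appears explicitly, so all the pieces $\int_A u\circ T^k\cdot g\dd\mathm$ for $k\geq 0$ are meaningful.

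Then the partial sum telescopes:
\begin{equation*}
\sum_{n=0}^N \int_A (u\circ T - u)\Lcal^n(g)\dd\mathm
= \sum_{n=0}^N \left(\int_A u\circ T^{n+1}\cdot g\dd\mathm - \int_A u\circ T^n\cdot g\dd\mathm\right)
= \int_A u\circ T^{N+1}\cdot g\dd\mathm - \int_A ug\dd\mathm\, .
\end{equation*}
Now I would let $N\to+\infty$: by the mixing hypothesis on $(g,u)$, $\int_A u\circ T^{N+1}\cdot g\dd\mathm = \int_A u\cdot\Lcal^{N+1}g\dd\mathm \to \alpha(g,u)$, which gives exactly Equation~\eqref{eq:GKCobord2}. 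This also shows that the series on the left converges.

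I do not expect any serious obstacle here; the only points requiring a modicum of care are bookkeeping ones. One must justify the term-by-term manipulation $\int_A (u\circ T - u)\cdot\Lcal^n(g)\dd\mathm = \int_A u\circ T^{n+1}\cdot g\dd\mathm - \int_A u\circ T^n\cdot g\dd\mathm$, i.e. that $u\circ T - u$ paired against $\Lcal^n g$ splits as a difference — this is linearity of the integral together with the iterated adjunction $\int_A (\psi\circ T)\cdot\Lcal^n(g)\dd\mathm = \int_A \psi\cdot\Lcal^{n-1}(g)\dd\mathm$ (valid because $\Lcal\Lcal^{n} = \Lcal^{n}$ composed appropriately and $\int_A(\psi\circ T)\cdot\Lcal(\chi)\dd\mathm = \int_A \psi\cdot\chi\dd\mathm$ whenever the relevant integrals are absolutely convergent). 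The hypothesis that each $\int_A u\circ T^n\cdot g\dd\mathm$ ($n\geq 1$, and $n=0$ separately) is well defined is precisely what makes this legitimate, so no integrability subtlety is swept under the rug. The structure of the argument is the exact mirror image of Lemma~\ref{lem:GKCobord1}, with the roles of ``past'' and ``future'' swapped, so I would present it in the same compact style.
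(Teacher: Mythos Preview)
Your proposal is correct and follows essentially the same approach as the paper: fix $N$, use the adjunction $\int_A \psi\cdot\Lcal^n(g)\dd\mathm = \int_A \psi\circ T^n\cdot g\dd\mathm$ to rewrite each term, telescope to obtain $\int_A u\circ T^{N+1}\cdot g\dd\mathm - \int_A ug\dd\mathm$, and pass to the limit using the hypothesis. The paper's write-up is more terse, but the argument is identical.
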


\begin{proof}
 
 Let $N \geq 0$. Then:
 \begin{align*}
  \sum_{n=0}^N \int_A (u \circ T-u) \Lcal^n (g) \dd \mathm 
  & = \sum_{n=0}^N \int_A (u \circ T^{n+1}-u \circ T^n) g \dd \mathm \\
  & = \int_A u \circ T^{N+1} \cdot g \dd \mathm - \int_A u g \dd \mathm \\ 
  & = \int_A u \cdot \Lcal^{N+1} (g) \dd \mathm - \int_A u g \dd \mathm.
 \end{align*}
 Taking the limit as $N$ goes to infinity yields Equation~\eqref{eq:GKCobord2}. 
\end{proof}

\begin{proof}[Proof of Proposition~\ref{prop:GKCobord3}]
 
 By bilinearity of $\sigma_{GK}^2$, it is enough to prove that $\sigma^2_{GK} (A, \mathm, T; f,u \circ T-u)$ is well defined and null, 
 or in other words that
 \begin{equation*}
 -\int_A g (u \circ T - u) \dd \mathm 
   + \sum_{n \geq 0} \int_A g \Lcal^n (u \circ T - u) \dd \mathm 
   + \sum_{n \geq 0} \int_A (u \circ T - u) \Lcal^n (g) \dd \mathm
   =0 \, ,
 \end{equation*}
which comes from Lemmas~\ref{lem:GKCobord1} and~\ref{lem:GKCobord2}.
\end{proof}

\subsection{Invariance of \texorpdfstring{$\sigma_{GK}^2$}{Green-Kubo's formula} under induction}

We now focus on the second part of the proof of Theorem~\ref{theo:InvarianceGreenKubo}. We want to show 
that, under the hypotheses of this theorem,
\begin{equation*}
 \sigma^2_{GK} (A, \mathm, T;\Sigma_B (f_1) \mathbf{1}_B, \Sigma_B (f_2) \mathbf{1}_B)
 =\sigma^2_{GK} (B, \mathm_{|B}, T_B; \Sigma_B (f_1), \Sigma_B (f_2))\, .
\end{equation*}
This follows from the next lemma:

\begin{lemma}
\label{lem:sigma2induct}

Let $(A,\mathm,T)$ be a recurrent, ergodic, measure preserving dynamical system, with $\mathm$ a $\sigma$-finite measure.
Let $B\subset A$ with $0<\mathm(B) \leq + \infty$. 
Let $p_1$, $p_2\in[1,+\infty]$ be such that $\frac{1}{p_1}+\frac{1}{p_2}=1$.
Let $g_1$, $g_2$ be two integrable functions with null integral defined on $B$ such that
\begin{equation}
 \label{sum1}
 \sum_{n\ge 0} \norm{\Lcal^n\left(g_i\mathbf{1}_B\right)}{\Lbb^{p_i}(A,\mathm)}
 <+\infty\, ,
\end{equation}
and
\begin{equation}
 \label{sum2}
 \sum_{n\ge 0} \norm{\Lcal_B^n g_i}{\Lbb^{p_i}(B,\mathm_{|B})}
 <+\infty\, ,
\end{equation}
for $i=1,2$. Then the following quantities are well defined and equal:
\begin{equation}
 \label{invinduc1}
 \sigma^2_{GK} (A, \mathm, T;g_1 \mathbf{1}_B, g_2 \mathbf{1}_B)
 = \sigma^2_{GK} (B, \mathm_{|B}, T_B; g_1, g_2)\, .
\end{equation}
\end{lemma}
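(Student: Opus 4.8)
\textbf{Proof plan for Lemma~\ref{lem:sigma2induct}.}
The plan is to express both sides of \eqref{invinduc1} in the ``resolvent'' form
$\sigma^2_{GK}(\cdot;g,h) = \int g\cdot h + \int g\cdot \sum_{n\geq 1}\mathcal L^n h + \int h\cdot \sum_{n\geq 1}\mathcal L^n g$, and to match the two sides term by term using Proposition~\ref{prop:InvarianceInduction}. First I would set $G_j^A := \sum_{n\geq 0}\mathcal L^n(g_j\mathbf 1_B)$, which by hypothesis \eqref{sum1} converges in $\Lbb^{p_j}(A,\mathm)$, and $G_j^B := \sum_{n\geq 0}\mathcal L_B^n g_j$, which by \eqref{sum2} converges in $\Lbb^{p_j}(B,\mathm_{|B})$. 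Observe that $u_j := G_j^A$ satisfies $(I-\mathcal L)u_j = g_j\mathbf 1_B$ (a telescoping/Abel summation, using that the tails $\mathcal L^n(g_j\mathbf 1_B)\to 0$ in $\Lbb^{p_j}$), so $u_j\in\Lbb^{p_j}(A,\mathm)$ and the right-hand side of the Poisson equation is supported on $B$; likewise $(I-\mathcal L_B)G_j^B = g_j$ on $B$. Then Proposition~\ref{prop:InvarianceInduction} applied to the pair $(u_j, g_j\mathbf 1_B)$ gives $(I-\mathcal L_B)(u_j)_{|B} = g_j$, and Lemma~\ref{lem:Liouville} (uniqueness up to a $T_B$-invariant function, hence a constant by ergodicity of $(B,\mathm_{|B},T_B)$, which holds by Corollary~\ref{cor:InvarianceMesure} and ergodicity of $(A,\mathm,T)$) yields $(u_j)_{|B} = G_j^B + c_j$ for some constant $c_j$.

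Next I would reconcile the Green--Kubo sums. The $A$-side quantity is
$\sigma^2_{GK}(A,\mathm,T;g_1\mathbf 1_B,g_2\mathbf 1_B) = -\int_A g_1\mathbf 1_B\cdot g_2\mathbf 1_B\dd\mathm + \int_A g_1\mathbf 1_B\cdot G_2^A\dd\mathm + \int_A g_2\mathbf 1_B\cdot G_1^A\dd\mathm$; since $g_i$ is supported on $B$, each of the last two integrals is $\int_B g_i\cdot (u_j)_{|B}\dd\mathm = \int_B g_i\cdot(G_j^B + c_j)\dd\mathm = \int_B g_i\cdot G_j^B\dd\mathm$, the constant dropping out because $\int_B g_i\dd\mathm = 0$. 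Comparing with
$\sigma^2_{GK}(B,\mathm_{|B},T_B;g_1,g_2) = -\int_B g_1 g_2\dd\mathm + \int_B g_1\cdot G_2^B\dd\mathm + \int_B g_2\cdot G_1^B\dd\mathm$ gives the claimed equality, once one checks that all series defining these Green--Kubo forms converge absolutely. That convergence is exactly what \eqref{sum1} and \eqref{sum2} provide: for the $A$-side, $\sum_n|\int_A g_j\mathbf 1_B\cdot\mathcal L^n(g_i\mathbf 1_B)|\leq \norm{g_j}{\Lbb^{p_j}}\sum_n\norm{\mathbf 1_B\mathcal L^n(g_i\mathbf 1_B)}{\Lbb^{p_i}}<+\infty$ by Hölder, and symmetrically for the $B$-side using \eqref{sum2} and the fact that $\mathm_{|B}$ is $T_B$-invariant so $\mathcal L_B$ is a contraction on each $\Lbb^{p}(B,\mathm_{|B})$.

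The main obstacle is the bookkeeping around the constant $c_j$ and, relatedly, making sure that all the formal manipulations (the telescoping that turns $\sum_n\mathcal L^n(g_j\mathbf 1_B)$ into a solution of the Poisson equation, and the term-by-term identification of the two Green--Kubo expansions) are carried out in the correct $\Lbb^p$ spaces so that the pairings against $g_i\in\Lbb^{p_i}$ make sense; in particular one must verify that $G_j^A\in\Lbb^{p_j}(A,\mathm)$ globally (not merely on $B$) in order to invoke Proposition~\ref{prop:InvarianceInduction} with $p=p_j$, which is immediate from \eqref{sum1} only if that hypothesis controls the full $\Lbb^{p_j}(A,\mathm)$ norm of the tails rather than just the norm restricted to $B$ — here one uses that $\sum_n\norm{\mathcal L^n(g_j\mathbf 1_B)}{\Lbb^{p_j}(A,\mathm)}<+\infty$, which is what \eqref{sum1} actually asserts. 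A secondary point requiring care is the case $p_i=1$ or $p_i=\infty$ (so $p_j=\infty$ or $p_j=1$): the Hölder pairings and the dominated-convergence arguments for the partial sums still go through, but one should note it explicitly. Everything else is routine.
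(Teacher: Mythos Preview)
Your proposal is correct and follows essentially the same approach as the paper: define $G_i = \sum_{n\geq 0}\Lcal^n(g_i\mathbf 1_B)$, use Proposition~\ref{prop:InvarianceInduction} to get $(I-\Lcal_B)(G_i)_{|B}=g_i$, invoke Lemma~\ref{lem:Liouville} to conclude $(G_i)_{|B}=\sum_{n\geq 0}\Lcal_B^n g_i + c_i$, and then integrate against the zero-mean $g_j$ so the constant drops out. Your additional remarks about absolute convergence via H\"older and the edge cases $p_i\in\{1,\infty\}$ are sound and merely make explicit what the paper leaves implicit.
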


\begin{proof}
Let $G_i := \sum_{n=0}^{+ \infty} \Lcal^n (g_i\mathbf{1}_B)$, which is well-defined 
and in $\Lbb^{p_i}(A,\mathm)$ by the condition~\eqref{sum1}. Then $(I-\Lcal) G_i = g_i \mathbf{1}_B$. 
By Proposition~\ref{prop:InvarianceInduction}, since $g_i\mathbf{1}_B$ is in $\Lbb^{p_i}(A,\mathm)$, 
we have $(I-\Lcal_B) (G_i)_{|B} = g_i$. 

\smallskip

The condition~\eqref{sum2} implies:
\begin{equation*}
 g_i
 = (I - \Lcal_B) \sum_{n \geq 0} \Lcal_B^n g_i \text{ in } \Lbb^{p_i}(B,\mathm)\, .
\end{equation*}
Since the system is assumed to be ergodic, by Lemma~\ref{lem:Liouville}, there exists a constant $C_0(G_i)$ such that
\begin{equation*}
(G_i)_{|B} = \sum_{n=0}^{+ \infty} \Lcal_B^n g_i+C_0(G_i)\, .
\end{equation*} 
Integrating against $g_j \mathbf{1}_B$, which has integral zero, we get:
\begin{align*}
 \sum_{n \geq 0} \int_A (g_j \mathbf{1}_B) \Lcal^n (g_i \mathbf{1}_B) \dd \mathm 
 & = \int_A (g_j \mathbf{1}_B) \sum_{n \geq 0} \Lcal^n (g_i \mathbf{1}_B) \dd \mathm \\
 & = \int_B g_j \, G_i \dd \mathm \\
 & = \int_B g_j \sum_{n=0}^{+ \infty} \Lcal_B^n g_i \dd \mathm \\
 & = \sum_{n=0}^{+ \infty} \int_B g_j \Lcal_B^n g_i \dd \mathm.
\end{align*}
Finally
\begin{align*}
 \sigma^2_{GK} (A, \mathm, T; g_1 \mathbf{1}_B, g_2 \mathbf{1}_B) 
 & = - \int_B g_1 g_2 \dd \mathm + \sum_{n \geq 0} \int_B g_1 \Lcal^n (g_2 \mathbf{1}_B) \dd \mathm + \sum_{n \geq 0} \int_B g_2 \Lcal^n (g_1 \mathbf{1}_B) \dd \mathm \\
 & = - \int_Bg_1g_2 \dd \mathm + \sum_{n \geq 0} \int_B g_1 \Lcal_B^n (g_2) \dd \mathm + \sum_{n \geq 0} \int_B g_2 \Lcal_B^n g_1 \dd \mathm \\
 & = \sigma^2_{GK} (B, \mathm_{|B}, T_B; g_1, g_2). \qedhere
\end{align*}
\end{proof}

We can now finish the proof of Theorem~\ref{theo:InvarianceGreenKubo}.

\begin{proof}[Proof of Theorem~\ref{theo:InvarianceGreenKubo}]

The quantity $\sigma^2_{GK} (A, \mathm, T; f_1,f_2)$ is well-defined by hypothesis. 
Since $(A,\mathm,T)$ mixes $(C(f_1),f_2)$, by Proposition~\ref{prop:GKCobord3}, 
the following quantities are well-defined and equal:
\begin{equation*}
 \sigma^2_{GK} (A, \mathm, T; f_1,f_2)
 = \sigma^2_{GK} (A, \mathm, T; \Sigma_B(f_1)\mathbf{1}_B,f_2) \, .
\end{equation*}
Since $(A,\mathm,T)$ mixes $(\mathbf{1}_B\Sigma_B(f_1),C(f_2))$, 
applying again Proposition~\ref{prop:GKCobord3} yields:
\begin{equation*}
 \sigma^2_{GK} (A, \mathm, T; f_1,f_2)
 \sigma^2_{GK} (A, \mathm, T; \Sigma_B(f_1)\mathbf{1}_B,\Sigma_B(f_2)\mathbf{1}_B) \, ,
\end{equation*}
that is, Equation~\eqref{invcob}.

\smallskip

Due to the condition~\eqref{eq:hypB}, the quantity $\sigma^2_{GK} (B, \mathm_{|B}, T_B ; \Sigma_B (f_1), \Sigma_B (f_2))$ is well-defined.
Lemma~\ref{lem:sigma2induct} applied to $g_i:=\Sigma_B (f_i)$ ensures that 
the following quantities are well defined and equal:
\begin{align*}
 \sigma^2_{GK} (A, \mathm, T; f_1,f_2) 
 & = \sigma^2_{GK} (A, \mathm, T;\Sigma_B (f_1) \mathbf{1}_B, \Sigma_B (f_2) \mathbf{1}_B) \\
 & = \sigma^2_{GK} (B, \mathm_{|B}, T_B; \Sigma_B (f_1), \Sigma_B (f_2))\, . \qedhere
\end{align*}
\end{proof}

\section{A degree \texorpdfstring{$3$}{3} invariant}
\label{sec:Degre3}

As an application of these methods, we shall now study a degree $3$ analog of the Green-Kubo formula. 

\subsection{Definition and property for \texorpdfstring{$\tau^3$}{Tau 3}}

Informally, $ \sigma_{GK}^2 (A, \mathm, T ; f,g) $ can be rewritten
\begin{equation*}
 \sigma_{GK}^2 (A, \mathm, T ; f,g) 
 = \sum_{n \in \Z} \int_A f \cdot g \circ T^n \dd \mathm,
\end{equation*}
which can be made rigorous for many examples of invertible hyperbolic dynamical systems. 
A degree $3$ analog would then be, for suitable functions $f$, $g$, $h$:
\begin{equation}
 \label{eq:EquationTauNaive}
 \sum_{n, m \in \Z} \int_A f \cdot g \circ T^n \cdot h \circ T^m \dd \mathm\, ,
\end{equation}
as soon as the sum is absolutely convergent. To make sense of this formula when $T$ is not invertible, note that:
\begin{equation*}
 \int_A f \cdot g \circ T^n \cdot h \circ T^m \dd \mathm 
 = \int_A f \circ T^{-\min \{n,m,0\}} \cdot g \circ T^{n-\min \{n,m,0\}} \cdot h \circ T^{m-\min \{n,m,0\}} \dd \mathm.
\end{equation*}

In what follows, we shall write $\sum_{\Alt}$ for the sum over all permutations of $\{f,g,h\}$ in the formula. 
For instance, $\sum_{\Alt} fgh = 6fgh$. 
A careful reorganization of the sum above yields the following formula.

\begin{definition}
\label{def:Tau3}
 
 Let $(A, \mathm, T)$ be a measure preserving dynamical system.
 Define:
 \begin{align}
  \tau^3 (A, \mathm, T ; f, g, h) 
  & := \sum_{\Alt} \left[ \frac{1}{6} \int_A fgh \dd \mathm   + \frac{1}{2}\sum_{n \geq 1} \int_A fg \cdot \Lcal^n (h) \dd \mathm \right. \label{eq:DefinitionTau} \\
  &  \hspace{4em} \left. + \frac{1}{2} \sum_{n \geq 1} \int_A f \cdot \Lcal^n (gh) \dd \mathm + \sum_{n, m \geq 1} \int_A f \cdot \Lcal^n (g \Lcal^m(h)) \dd \mathm \right]\, , \nonumber
 \end{align}
whenever the sums are absolutely convergent.
\end{definition}

The following criterion can be used to check that the sum in Equation~\eqref{eq:DefinitionTau} 
is indeed convergent.

\begin{lemma}
Let $(A, \mathm, T)$ be a measure preserving dynamical system, with $\mathm$
$\sigma$-finite. Let $\Bcal$ be a Banach space of functions continuously embedded in $\Lbb^1_0(A, \mathm) \cap \Lbb^3(A, \mathm)$, 
and on which $(\Lcal^n)_{n \geq 0}$ is summable. Assume moreover that
\begin{equation}
 \label{sum1+2}
 K_0
 := \sup_{\substack{f,g,h\in\Bcal \\ \norm{f}{\Bcal},\norm{g}{\Bcal},\norm{h}{\Bcal} \leq 1}} \sum_{n\ge 0}\left|\int_A f\Lcal^n (gh)\dd\mathm\right|
 <+\infty\, .
\end{equation}
%
Then the sum in Definition~\ref{def:Tau3} converges absolutely, so that $\tau^3$ is a well-defined trilinear symmetric form on $\Bcal$.
\end{lemma}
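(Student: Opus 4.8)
The plan is to show that each of the four families of terms inside the bracket of~\eqref{eq:DefinitionTau} is absolutely summable, with a bound of the form $K\,\norm{f}{\Bcal}\norm{g}{\Bcal}\norm{h}{\Bcal}$; summing over the six permutations in $\sum_{\Alt}$ then gives absolute convergence, and the uniform bound shows $\tau^3$ is a bounded trilinear form. Trilinearity and symmetry will be immediate afterwards: each elementary expression $\int_A fgh\dd\mathm$, $\int_A fg\,\Lcal^n(h)\dd\mathm$, $\int_A f\,\Lcal^n(gh)\dd\mathm$, $\int_A f\,\Lcal^n(g\Lcal^m(h))\dd\mathm$ is trilinear in $(f,g,h)$ by linearity of $\Lcal$ and of the integral, and $\sum_{\Alt}$ symmetrizes. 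The two recurring tools will be: (i) the continuous embedding $\Bcal\hookrightarrow\Lbb^3(A,\mathm)$, which by Cauchy--Schwarz makes the product of any two elements of $\Bcal$ lie in $\Lbb^{3/2}(A,\mathm)$ with $\norm{uv}{\Lbb^{3/2}}\leq\norm{u}{\Lbb^3}\norm{v}{\Lbb^3}$, so that all products occurring are integrable by Hölder (with exponents $1,1,1$ or $3/2,3$) and $\Lcal^n$ acts on them since it contracts every $\Lbb^p$; and (ii) summability of $(\Lcal^n)_{n\geq 0}$ on $\Bcal$, i.e.\ $M:=\sum_{n\geq 0}\norm{\Lcal^n}{\Bcal}<+\infty$ in operator norm, together with the fact that $\Lcal$ maps $\Bcal$ into itself.

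For the easy terms: the zeroth-order term is bounded by $\norm{f}{\Lbb^3}\norm{g}{\Lbb^3}\norm{h}{\Lbb^3}$ via Hölder and the embedding. For $\sum_{n\geq 1}\int_A fg\cdot\Lcal^n(h)\dd\mathm$ I would estimate $\bigl|\int_A fg\,\Lcal^n h\dd\mathm\bigr|\leq\norm{fg}{\Lbb^{3/2}}\norm{\Lcal^n h}{\Lbb^3}\leq\norm{f}{\Lbb^3}\norm{g}{\Lbb^3}\norm{\Lcal^n h}{\Lbb^3}$ and then sum over $n$, using that $\Lcal^n h\in\Bcal$ and the embedding give $\sum_n\norm{\Lcal^n h}{\Lbb^3}\lesssim M\,\norm{h}{\Bcal}$. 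For $\sum_{n\geq 1}\int_A f\cdot\Lcal^n(gh)\dd\mathm$ there is essentially nothing to prove: up to homogeneity (and discarding the $n=0$ term), this is exactly the quantity controlled by hypothesis~\eqref{sum1+2}, hence bounded by $K_0\norm{f}{\Bcal}\norm{g}{\Bcal}\norm{h}{\Bcal}$.

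The main obstacle is the double sum $\sum_{n,m\geq 1}\int_A f\cdot\Lcal^n(g\,\Lcal^m(h))\dd\mathm$, and here the key observation is that $\Lcal^m h$ is again an element of $\Bcal$, with $\norm{\Lcal^m h}{\Bcal}\leq\norm{\Lcal^m}{\Bcal}\norm{h}{\Bcal}$. Therefore, for each fixed $m$, hypothesis~\eqref{sum1+2} applies with $h$ replaced by $\Lcal^m h$ and (by homogeneity) gives $\sum_{n\geq 1}\bigl|\int_A f\,\Lcal^n(g\,\Lcal^m h)\dd\mathm\bigr|\leq K_0\norm{f}{\Bcal}\norm{g}{\Bcal}\norm{\Lcal^m}{\Bcal}\norm{h}{\Bcal}$; summing over $m\geq 1$ and using $\sum_m\norm{\Lcal^m}{\Bcal}\leq M<+\infty$ bounds the double sum by $K_0 M\,\norm{f}{\Bcal}\norm{g}{\Bcal}\norm{h}{\Bcal}$. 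One checks en route, via (i), that $g\,\Lcal^m h\in\Lbb^{3/2}(A,\mathm)$ so that $\Lcal^n$ acts on it and the integral against $f\in\Lbb^3(A,\mathm)$ is meaningful.

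Adding the four estimates and then summing over the six permutations yields absolute convergence of the series in Definition~\ref{def:Tau3}, together with $|\tau^3(A,\mathm,T;f,g,h)|\leq 6K\,\norm{f}{\Bcal}\norm{g}{\Bcal}\norm{h}{\Bcal}$ for a constant $K$ depending only on the embedding constant, $M$ and $K_0$; combined with the trilinearity and symmetry noted at the outset, this gives that $\tau^3$ is a well-defined bounded trilinear symmetric form on $\Bcal$. I expect the only genuinely delicate point to be the bookkeeping in the double-sum term — the "peeling off" of the inner sum using $\Lcal^m h\in\Bcal$; everything else reduces to Hölder's inequality and the two summability hypotheses.
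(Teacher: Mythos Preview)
Your proposal is correct and follows essentially the same approach as the paper's proof: H\"older for the constant term, H\"older plus summability of $(\Lcal^n)$ in $\Bcal$ (composed with the embedding into $\Lbb^3$) for the second term, direct appeal to~\eqref{sum1+2} for the third, and for the double sum the same ``peeling'' trick of replacing $h$ by $\Lcal^m h\in\Bcal$ in~\eqref{sum1+2} and then summing $\norm{\Lcal^m}{\Bcal\to\Bcal}$ over $m$. Your additional remarks on why each integrand is well-defined and on trilinearity/symmetry are fine but not strictly needed.
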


Observe that the assumption~\eqref{sum1+2} holds true if there exists a function $H$ defined on $A$ with unit integral 
such that $\sum_{n\geq 0} \left|\int_A f\Lcal^n H\dd \mathm\right|<+\infty$ for every $f\in \Bcal$ (this is true for example 
for $H=\mathbf{1}_A/\mathm(A)$ if $\mathm$ is finite) and such that $(f,g)\mapsto fg-H\int_Afg \dd\mathm$ is continuous from
$\Bcal^2$ to a Banach space $\Bcal'$ continuously embedded in $\Lbb_0^1(A,\mathm)\cap\Lbb^{\frac{3}{2}}(A,\mathm)$ 
and on which $\Lcal^n$ is summable.

\smallskip

Another strategy to prove \eqref{sum1+2} could be to prove the existence of a sequence $(c_n)_n$ of real numbers such that
\begin{equation}
\sup_{\substack{f,g,h\in\Bcal \\ \norm{f}{\Bcal},\norm{g}{\Bcal},\norm{h}{\Bcal} \leq 1}} \sum_{n\geq 0} \left|\int_A f\left(\Lcal^n (gh)-c_n\int_A gh\dd \mathm\right)\dd\mathm \right|
<+\infty\, .
\end{equation}
When $\mathm$ is infinite, such an estimate is related to higher order terms in mixing estimates, 
$(c_n)_n$ being the speed of mixing (see for example~\cite{Pene:2019}).

\begin{proof}
 
 The sum contains four parts. The first part contains a single term, 
 which is finite since $\Bcal \subset \Lbb^3 (A, \mathm)$. 
For the second term, we observe that
 \begin{align*}
  \left| \int_A fg \cdot \Lcal^n (h) \dd \mathm \right|
  & \leq \norm{fg}{\Lbb^{3/2}} \norm{\Lcal^n (h)}{\Lbb^3} \\
  & \leq \norm{f}{\Lbb^3} \norm{g}{\Lbb^3} \norm{h}{\Bcal} \norm{\Lcal^n}{\Bcal \to \Lbb^3},
 \end{align*}
 which by hypothesis is summable in $n$.
 
 \smallskip
 
 Consider the third term. By hypothesis,
 %
 \begin{equation*}
  \sum_{n\ge 0}\left| \int_A f \cdot \Lcal^n (gh) \dd \mathm \right|
   \leq K_0\norm{f}{\Bcal}\norm{g}{\Bcal}\norm{h}{\Bcal}\, ,
 \end{equation*}
 which is finite.
 
 \smallskip
 
 Finally, let us focus on the fourth term. We have:
 \begin{equation*} 
 \sum_{n\ge 0} \left| \int_A f \cdot \Lcal^n (g \Lcal^m(h)) \dd \mathm \right|  
 \leq K_0 \norm{f}{\Bcal}  \norm{g }{\Bcal}\norm{\Lcal^m}{\Bcal \to \Bcal}\norm{h}{\Bcal}\,  ,
 \end{equation*}
 %
 %
 which is summable in $m$.
\end{proof}

The function $\tau^3$ admits a few equivalent definitions, which may be more convenient depending on the computations. 
First, making the sums start at $0$ instead of $1$, we get the next lemma (beware the change of signs).

\begin{lemma}
\label{lem:FormulaTau3}
It $\tau^3 (A, \mathm, T ; f, g, h)$ is absolutely convergent, then
\begin{align*}
 \tau^3 (A, \mathm, T ; f, g, h) 
 & = \sum_{\Alt} \left[ \frac{1}{6} \int_A fgh \dd \mathm - \frac{1}{2}\sum_{n \geq 0} \int_A fg \cdot \Lcal^n (h) \dd \mathm \right. \\
 &  \hspace{4em} \left. - \frac{1}{2} \sum_{n \geq 0} \int_A f \cdot \Lcal^n (gh) \dd \mathm + \sum_{n, m \geq 0} \int_A f \cdot \Lcal^n (g \Lcal^m(h)) \dd \mathm \right].
\end{align*}
\end{lemma}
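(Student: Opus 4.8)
The plan is to prove, for each fixed triple $(f,g,h)$ at which $\tau^3(A,\mathm,T;f,g,h)$ converges absolutely, the bracket-level identity asserting that
\[
 \frac16\int_A fgh\dd\mathm + \frac12\sum_{n\geq1}\int_A fg\cdot\Lcal^n(h)\dd\mathm + \frac12\sum_{n\geq1}\int_A f\cdot\Lcal^n(gh)\dd\mathm + \sum_{n,m\geq1}\int_A f\cdot\Lcal^n(g\Lcal^m(h))\dd\mathm
\]
equals
\[
 \frac16\int_A fgh\dd\mathm - \frac12\sum_{n\geq0}\int_A fg\cdot\Lcal^n(h)\dd\mathm - \frac12\sum_{n\geq0}\int_A f\cdot\Lcal^n(gh)\dd\mathm + \sum_{n,m\geq0}\int_A f\cdot\Lcal^n(g\Lcal^m(h))\dd\mathm ,
\]
and then to sum this identity over the six permutations making up $\sum_{\Alt}$.

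First I would unwind the hypothesis: absolute convergence of $\tau^3(A,\mathm,T;f,g,h)$ means, via Definition~\ref{def:Tau3}, that $\int_A fgh\dd\mathm$ is finite and that the three series $\sum_{n\geq1}|\int_A fg\cdot\Lcal^n(h)\dd\mathm|$, $\sum_{n\geq1}|\int_A f\cdot\Lcal^n(gh)\dd\mathm|$ and $\sum_{n,m\geq1}|\int_A f\cdot\Lcal^n(g\Lcal^m(h))\dd\mathm|$ all converge. This is what licenses every regrouping below, and it will also show that the series on the right-hand side converge absolutely.

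The heart of the argument is a splitting of the double series over $\{n,m\geq0\}$ into its interior $\{n,m\geq1\}$, the two boundary strips $\{n=0,\ m\geq1\}$ and $\{n\geq1,\ m=0\}$, and the corner $\{n=m=0\}$ --- permitted because the family is absolutely summable. Using $\Lcal^0=\id$, the general term on the strip $n=0$ is $\int_A f\cdot g\Lcal^m(h)\dd\mathm=\int_A fg\cdot\Lcal^m(h)\dd\mathm$, the general term on the strip $m=0$ is $\int_A f\cdot\Lcal^n(gh)\dd\mathm$, and the corner term is $\int_A fgh\dd\mathm$; hence
\[
 \sum_{n,m\geq0}\int_A f\cdot\Lcal^n(g\Lcal^m(h))\dd\mathm = \sum_{n,m\geq1}\int_A f\cdot\Lcal^n(g\Lcal^m(h))\dd\mathm + \sum_{m\geq1}\int_A fg\cdot\Lcal^m(h)\dd\mathm + \sum_{n\geq1}\int_A f\cdot\Lcal^n(gh)\dd\mathm + \int_A fgh\dd\mathm .
\]
I would then substitute this into the claimed right-hand side and expand $\sum_{n\geq0}$ as the $n=0$ term plus $\sum_{n\geq1}$ in the two single series; collecting terms, the strip contributions exactly absorb the sign flip on the $\frac12$-sums, and the $\int_A fgh\dd\mathm$ coefficient simplifies from $\frac16-\frac12-\frac12+1$ back to $\frac16$, giving the left-hand side.

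The argument involves no analytic difficulty: the only thing to watch is the bookkeeping, namely identifying the $n=0$ and $m=0$ strips of the double series with the two single series via $\Lcal^0=\id$, and tracking the corner term $\int_A fgh\dd\mathm$ so that it recombines correctly with the explicit $\frac16\int_A fgh\dd\mathm$ term and the $n=0$ contributions of the single sums. Absolute convergence throughout is what makes all the rearrangements legitimate.
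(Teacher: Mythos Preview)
Your proof is correct and follows essentially the same route as the paper: both arguments identify the boundary terms that appear when the summation ranges are shifted from $n,m\geq 1$ to $n,m\geq 0$, using $\Lcal^0=\id$ to recognize the strips of the double sum as the single sums and the corner as $\int_A fgh\dd\mathm$. The paper packages the same computation by naming the four pieces $A_0$, $A_{1,i}$, $A_{2,i}$, $A_{3,i,j}$ and writing the relations $A_{1,1}=A_{1,0}-A_0$, $A_{2,1}=A_{2,0}-A_0$, $A_{3,1,1}=A_{3,0,0}-A_{2,0}-A_{1,0}+A_0$, but the content is identical to your strip-and-corner decomposition.
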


\begin{proof}

Let us introduce the following notations:
\begin{align*}
 A_0 (f,g,h) &:=\int_A fgh\dd \mathm\, ,\quad
 & A_{1,i} (f,g,h) &:=\sum_{n\ge i}\int_Afg \cdot \Lcal^n (h) \dd \mathm\, ,\\
 A_{2,i} (f,g,h) &:=\sum_{n \geq i} \int_A f \cdot \Lcal^n (gh) \dd \mathm\, ,\quad
 & A_{3,i,j} (f,g,h) &:= \sum_{n\geq i, m \geq j} \int_A f \cdot \Lcal^n (g \Lcal^m(h)) \dd \mathm\, .
\end{align*}
Note that
\begin{align*}
 A_{1,1} & =A_{1,0}-A_0\, ,\quad
 A_{2,1} =A_{2,0}-A_0\, ,\\
 A_{3,1,1}&=A_{3,0,1}-A_{1,1}=A_{3,0,0}-A_{2,0}-A_{1,0}+A_0\, .
\end{align*}
Therefore
\begin{align*}
\tau^3 
 & = \sum_{\Alt}\left(\frac{A_0}6+\frac{A_{1,1}}2+\frac{A_{2,1}}2+A_{3,1,1}\right)=\sum_{\Alt}\left(\frac{A_0}6+\frac{A_{1,0}}2+\frac{A_{2,0}}2+A_{3,0,0}-A_{2,0}-A_{1,0}\right)\\
 & = \sum_{\Alt}\left(\frac{A_0}6-\frac{A_{1,0}}2-\frac{A_{2,0}}2+A_{3,0,0}\right)\, . \qedhere
\end{align*}
\end{proof}

Another useful formula follows by fixing the value of $n$ in Equation~\eqref{eq:EquationTauNaive}, 
and summing over $m$, which leads to the following result expressing $\tau^3$ in terms of $\sigma_{GK}^2$.

\begin{lemma}
\label{lem:tau3sigma2}
It $\tau^3 (A, \mathm, T ; f, g, h)$ is absolutely convergent, then
\begin{align}
 \tau^3 (A, \mathm, T ; f, g, h) 
 & = \sigma_{GK}^2 (A, \mathm, T ; fg, h) + \sum_{n \geq 1} \sigma_{GK}^2 (A, \mathm, T ; f \cdot g \circ T^n, h) \label{tau3sigma2} \\
 & \hspace{4em} + \sum_{n \geq 1} \sigma_{GK}^2 (A, \mathm, T ; f\circ T^n \cdot g, h). \nonumber
\end{align}

\end{lemma}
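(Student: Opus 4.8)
The plan is to start from the naive expression~\eqref{eq:EquationTauNaive}, namely $\sum_{n,m\in\Z}\int_A f\cdot g\circ T^n\cdot h\circ T^m\dd\mathm$, rewritten so that every integrand has nonnegative exponents (using the identity displayed just before Definition~\ref{def:Tau3}), and to regroup the double sum by fixing the ``middle'' time. Concretely, the claimed formula~\eqref{tau3sigma2} amounts to saying that if one groups the terms of~\eqref{eq:EquationTauNaive} according to the relative position of $g$ with respect to $f$, then each group is exactly a copy of $\sigma_{GK}^2(A,\mathm,T;\,\cdot\,,h)$ with first argument $f\circ T^{k}\cdot g\circ T^{\ell}$ for suitable shifts, and summing the value of $k-\ell$ over $\Z$ produces the three families in~\eqref{tau3sigma2}: the diagonal term $k=\ell$ giving $\sigma_{GK}^2(A,\mathm,T;fg,h)$, the terms with $g$ to the future of $f$ giving $\sum_{n\ge1}\sigma_{GK}^2(A,\mathm,T;f\cdot g\circ T^n,h)$, and the terms with $g$ to the past of $f$ giving $\sum_{n\ge1}\sigma_{GK}^2(A,\mathm,T;f\circ T^n\cdot g,h)$.

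In detail, I would work from the equivalent ``sums starting at $0$'' description in Lemma~\ref{lem:FormulaTau3}, which is the cleanest for bookkeeping. Expand $\sum_{\Alt}$ into its six terms; for each ordered triple $(x,y,z)$ over $\{f,g,h\}$, the contribution is $\frac16\int_A xyz - \frac12\sum_{n\ge0}\int_A xy\cdot\Lcal^n z - \frac12\sum_{n\ge0}\int_A x\cdot\Lcal^n(yz) + \sum_{n,m\ge0}\int_A x\cdot\Lcal^n(y\Lcal^m z)$. The key algebraic step is to recognise that, for a fixed ``outermost'' pair in which $h$ plays the distinguished role (i.e.\ reorganise the six permutations into those where $h$ is the variable carried by the long-range index), the inner sum over $m$ telescopes/assembles into $\sigma_{GK}^2(A,\mathm,T;\,u,h)$ with $u$ ranging over $fg$, $f\cdot g\circ T^n$ ($n\ge1$), $g\cdot f\circ T^n$ ($n\ge1$). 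One uses the duality $\int_A a\cdot\Lcal^n b\dd\mathm=\int_A a\circ T^n\cdot b\dd\mathm$ freely, and the definition $\sigma_{GK}^2(A,\mathm,T;u,h)=\int_A uh\dd\mathm+\sum_{n\ge1}\int_A u\cdot\Lcal^n h\dd\mathm+\sum_{n\ge1}\int_A h\cdot\Lcal^n u\dd\mathm$. Matching the six terms of $\tau^3$ against the expansion of the right-hand side of~\eqref{tau3sigma2}, term by term (the $\frac16$-terms against the diagonal $\int_A(fg)h$, etc.), finishes the identity; absolute convergence, assumed in the hypothesis, justifies all the rearrangements.

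The main obstacle I anticipate is purely combinatorial: keeping track of which of the six permutations in $\sum_{\Alt}$ contributes to which of the three families on the right, and verifying that the numerical coefficients ($\frac16$, $-\frac12$, $+1$) combine correctly after the regrouping — in particular that the ``$-\frac12$'' double-counting terms cancel against the overlap between the $n=0$ instance of the shifted families and the diagonal term. A safe way to organise this is to first prove the identity formally at the level of the absolutely convergent triple series $\sum_{n,m\in\Z}$ (where by symmetry the regrouping ``fix $n$, sum over $m$'' is transparent), then invoke the derivation of Definition~\ref{def:Tau3} (resp.\ Lemma~\ref{lem:FormulaTau3}) from that triple series to transfer the statement to the stated closed form. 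Since the problem explicitly grants that $\tau^3(A,\mathm,T;f,g,h)$ is absolutely convergent, Fubini/Tonelli for series applies and no further analytic estimates are needed.
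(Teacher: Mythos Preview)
Your proposal is correct and follows essentially the same route as the paper: a direct combinatorial bookkeeping that matches the expansion of the right-hand side of~\eqref{tau3sigma2} against the defining formula of $\tau^3$, with absolute convergence justifying all rearrangements. The paper carries this out in the direction RHS $\to$ LHS, expanding $\sigma_{GK}^2(A,\mathm,T;fg,h)+B(f,g,h)+B(g,f,h)$ (where $B$ collects the $n\ge1$ contributions), rewriting each piece in the $A_0,A_{1,1},A_{2,1},A_{3,1,1}$ notation from the proof of Lemma~\ref{lem:FormulaTau3}, and checking that the total equals $\sum_{\Alt}(\tfrac{A_0}{6}+\tfrac{A_{1,1}}{2}+\tfrac{A_{2,1}}{2}+A_{3,1,1})$; the splitting of the double sum in $B$ into the cases $1\le n<m$ and $1\le m\le n$ is the step that produces the $A_{3,1,1}$ terms with the correct permutations.

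One small caution about your ``safe way'': the triple series $\sum_{n,m\in\Z}$ is only a heuristic in the paper (since $T$ need not be invertible), and the equivalence between that formal series and Definition~\ref{def:Tau3} is never proved as a lemma. So if you go that route you must in effect redo the derivation of Definition~\ref{def:Tau3} from the triple series rigorously, which is the same bookkeeping you were trying to avoid. Working directly with the $A_i$ notation, as the paper does, is the cleaner path.
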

\begin{proof}
The absolute convergence of $\tau^3$ implies the absolute convergence of the sums 
in the right hand side of Equation~\eqref{tau3sigma2}, which is equal to:
\begin{align*}
&\int_A fgh \dd \mathm + \sum_{m \geq 1} \int_A fg\cdot \Lcal^m (h) \dd \mathm + \sum_{m \geq 1} \int_A h\cdot \Lcal^m (fg) \dd \mathm+B(f,g,h)+B(g,f,h)\\
&=A_0(f,g,h)+A_{1,1}(f,g,h)+A_{2,1}(h,f,g)+B(f,g,h)+B(g,f,h)\\
&=A_0(f,g,h)+\frac{A_{1,1}(f,g,h)+A_{1,1}(g,f,h)+A_{2,1}(h,f,g)+A_{2,1}(h,g,f)}2+B(f,g,h)+B(g,f,h)\, ,
\end{align*}
where the $A_i$ are defined in the proof of Lemma~\ref{lem:FormulaTau3}, and:
\begin{align*}
B(f,g,h)&:=
\sum_{n \geq 1}\left( \int_A f \cdot g \circ T^nh \dd \mathm + \sum_{m \geq 1} \int_A f \cdot g \circ T^n \Lcal^m (h) \dd \mathm + \sum_{m \geq 1} \int_A h \Lcal^m (f \cdot g \circ T^n) \dd \mathm\right)\\
& = \sum_{n \geq 1}\int_A \Lcal^n(fh) \cdot g  \dd \mathm +\sum_{n,m \geq 1} \int_Ag\cdot\Lcal^n(f \Lcal^m (h))  \dd \mathm \\
& \hspace{4em} + \sum_{1\le n<m} \int_A h \Lcal^{m-n} (g\Lcal^nf) \dd \mathm + \sum_{1\le m\le n} \int_A g\Lcal^{n-m}(h\Lcal^mf)\dd \mathm\\
&= A_{2,1}(g,f,h)+A_{3,1,1}(g,f,h)+A_{3,1,1}(h,g,f)+A_{3,1,1}(g,h,f)+A_{1,1}(h,g,f)\\
& = \frac{A_{2,1}(g,f,h)+A_{2,1}(g,h,f)+A_{1,1}(h,g,f)+A_{1,1}(g,h,f)}2\\
& \hspace{4em} + A_{3,1,1}(g,f,h)+A_{3,1,1}(h,g,f)+A_{3,1,1}(g,h,f)\, .
\end{align*}
Putting all the terms together ends the proof of the lemma.
\end{proof}

\subsection{Quasi-invariance under induction}

Given a measure preserving dynamical system $(A,\mathm,T)$ and a subset $B$ of $A$, 
we wish to study the relation between
\begin{equation*}
 \tau^3 (A, \mathm, T; f_1,f_2,f_3) 
 \text{ and } 
 \tau^3 (B, \mathm_{|B}, T_B ; \Sigma_B (f_1), \Sigma_B (f_2),  \Sigma_B (f_3))\, ,
\end{equation*}
for functions $f_i$ on $A$ with null integral, as we did with Theorem~\ref{theo:InvarianceGreenKubo} 
for $\sigma_{GK}^2 (A, \mathm, T; f_1,f_2)$ and $\sigma_{GK}^2 (B, \mathm_{|B}, T_B ; \Sigma_B (f_1), \Sigma_B (f_2))$.

\smallskip

In what follows, we shall use two Banach spaces:
\begin{itemize}
 \item a Banach space $\Bcal_1 \subset \Lbb^1_0(B,\mathm_{|B})\cap\Lbb^3(B,\mathm_{|B})$,
 \item a Banach space $\Bcal_2 \subset \Lbb^1_0(B,\mathm_{|B})\cap\Lbb^\frac{3}{2} (B,\mathm_{|B})$,
\end{itemize}
and a function $H\in \Lbb^1 (B, \mathm_{|B})$ such that $\int_B H\dd\mathm=1$. 
Letting
\begin{equation}
 \label{eq:DefinitionProduitBoite}
 f\boxtimes g
 := fg-H\int_Bfg\dd\mathm\, ,
\end{equation}
we shall furthermore assume that $(f,g) \mapsto f\boxtimes g$ is continuous from $\Bcal_1^2$ to $\Bcal_2$. 
If $\mathm(B) <+\infty$, the simplest -- but not the sole -- choice is to take $H := \mathm (B)^{-1} \mathbf{1}_B$.

\begin{theorem}
\label{theo:Invariancetau3}

Let  $(A,\mathm,T)$ be a recurrent, ergodic, measure preserving dynamical system, 
with $\mathm$ a $\sigma$-finite measure. Let $B\subset A$ with $0 < \mathm(B) \leq +\infty$. 
Let $\Bcal_1$, $\Bcal_2$ and $H$ be as above.

\smallskip

Assume that
\begin{equation*}
 \norm{\Lcal_B^n}{\Bcal_1\rightarrow \Bcal_1}\, , 
 \hspace{2em} \norm{\mathbf{1}_B\Lcal^n}{\Bcal_2\rightarrow \Lbb^{\frac{3}{2}}(B,\mathm_{|B})}\, ,
 \hspace{2em} \norm{\Lcal_B^n}{\Bcal_2\rightarrow \Lbb^{\frac{3}{2}}(B,\mathm_{|B})}
\end{equation*}
are all summable.

\smallskip

Let $f_1$, $f_2$, $f_3 \in \Lbb_0^1 (A, \mathm)$ be such that
\begin{itemize}
\item $\Sigma_B(f_i) \in \Bcal_1$ for all $i \in \{1,2,3\}$,
\item $ \tau^3 (A, \mathm, T; g_1,g_2,g_3) $ is well defined for any choice of $g_i\in\{f_i,\Sigma_B(f_i)\mathbf{1}_B\}$,
\item $\sigma^2\left(A,\mathm,T;f_i,H\mathbf{1}_B\right)$ and $\sigma^2\left(B,\mathm_B,T_B;\Sigma_B(f_i),H\right)$ are absolutely convergent,
\item $(A,\mathm,T)$ mixes $(g_ig_j\circ T^n,C(g_k))$ for any choice of  $g_\ell \in \{f_\ell,\Sigma_B(f_\ell)\mathbf{1}_B\}$ 
and any choice of $i$, $j$, $k$ such that $\{i,j,k\}=\{1,2,3\}$.
\item $(A,\mathm,T)$ mixes $(H \mathbf{1}_B, C(f_i))$ for all $i \in \{1,2,3\}$.
\end{itemize}

Then $ \tau^3 (B, \mathm_{|B}, T_B ; \Sigma_B (f_1), \Sigma_B (f_2), \Sigma_B (f_3))$ is well defined, and
\begin{align}
 & \tau^3 (A, \mathm, T; f_1,f_2,f_3) 
 = \tau^3 (B, \mathm_{|B}, T_B ; \Sigma_B (f_1), \Sigma_B (f_2),  \Sigma_B (f_3)) \label{invarianceinduction3} \\
 & + \frac 1{2}\sum_{\Alt}\sigma^2_{GK}\left(B,\mathm_{|B},T_B; \Sigma_B(f_1),\Sigma_B(f_2)\right)  \left[\sigma_{GK}^2\left(A,\mathm,T;f_3,H\mathbf{1}_B\right)-\sigma_{GK}^2\left(B,\mathm_B,T_B;\Sigma_B(f_3),H\right) \right]\, .\nonumber
\end{align}
\end{theorem}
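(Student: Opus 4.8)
The plan is to follow the two-step scheme of the proof of Theorem~\ref{theo:InvarianceGreenKubo}: first reduce, by cohomology, to functions supported on $B$, proving
\[
 \tau^3 (A, \mathm, T; f_1,f_2,f_3) = \tau^3 (A, \mathm, T; \Sigma_B(f_1)\mathbf{1}_B,\Sigma_B(f_2)\mathbf{1}_B,\Sigma_B(f_3)\mathbf{1}_B),
\]
and then carry out the induction for $B$-supported functions, which is where the correction term in~\eqref{invarianceinduction3} appears.

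\textbf{Step 1 (cohomological reduction).} By Lemma~\ref{lem:tau3sigma2}, $\tau^3(A,\mathm,T;f,g,h)$ is an absolutely convergent combination of forms $\sigma^2_{GK}(A,\mathm,T;\cdot\,,h)$ whose first arguments are $fg$, $f\cdot g\circ T^n$ and $f\circ T^n\cdot g$. Since $\Sigma_B(f_3)\mathbf{1}_B-f_3 = C(f_3)\circ T-C(f_3)$ by~\eqref{eq:Cohomologue}, Proposition~\ref{prop:GKCobord3} lets me replace $f_3$ by $\Sigma_B(f_3)\mathbf{1}_B$ in each such term, the required mixing of the pairs $(f_1f_2,C(f_3))$, $(f_1\cdot f_2\circ T^n,C(f_3))$, $(f_1\circ T^n\cdot f_2,C(f_3))$ being exactly the assumed mixing of $(g_ig_j\circ T^n,C(g_k))$. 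Re-summing via Lemma~\ref{lem:tau3sigma2} yields $\tau^3(A,\mathm,T;f_1,f_2,\Sigma_B(f_3)\mathbf{1}_B)$, which is well defined by hypothesis; using the symmetry of $\tau^3$ I repeat the argument on the second and then the first slot, which is why the mixing hypothesis is stated for all $g_\ell\in\{f_\ell,\Sigma_B(f_\ell)\mathbf{1}_B\}$.

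\textbf{Step 2 (induction on $B$).} Put $g_i:=\Sigma_B(f_i)$; by~\eqref{eq:InvarianceIntegrale} each $g_i$ has zero $\mathm_{|B}$-integral. I apply the "sums from $0$" formula of Lemma~\ref{lem:FormulaTau3} to both $\tau^3(A,\mathm,T;g_1\mathbf{1}_B,g_2\mathbf{1}_B,g_3\mathbf{1}_B)$ and $\tau^3(B,\mathm_{|B},T_B;g_1,g_2,g_3)$ and compare family by family. The basic device, extracted from the proof of Lemma~\ref{lem:sigma2induct}: if $\psi$ is supported on $B$ with $\int_B\psi\dd\mathm=0$, then $\widehat\psi:=\sum_{n\geq 0}\Lcal^n(\psi)$ satisfies $(I-\Lcal)\widehat\psi=\psi$, so Proposition~\ref{prop:InvarianceInduction} and Lemma~\ref{lem:Liouville} (ergodicity) give $\widehat\psi_{|B}=\sum_{n\geq 0}\Lcal_B^n\psi + c(\psi)$ with $c(\psi)=\sum_{n\geq 0}\int_B H\Lcal^n(\psi)\dd\mathm-\sum_{n\geq 0}\int_B H\Lcal_B^n\psi\dd\mathm$; hence for every $\chi$ supported on $B$,
\[
 \sum_{n\geq 0}\int_B\chi\cdot\Lcal^n(\psi)\dd\mathm = \sum_{n\geq 0}\int_B\chi\cdot\Lcal_B^n\psi\dd\mathm + c(\psi)\int_B\chi\dd\mathm ,
\]
which is exact when $\int_B\chi=0$. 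The functions appearing inside the outer $\Lcal^n$ of Lemma~\ref{lem:FormulaTau3} — namely $g_ig_j$ and $g_j\cdot\Lcal^m(g_k)$ — are not mean-zero, so I split them using the product $\boxtimes$ of~\eqref{eq:DefinitionProduitBoite}: $g_ig_j\mathbf{1}_B=g_i\boxtimes g_j+H\mathbf{1}_B\int_B g_ig_j\dd\mathm$ with $g_i\boxtimes g_j\in\Bcal_2$ of zero integral, and for the degree-$3$ family I first replace $\sum_m\Lcal^m(g_k\mathbf{1}_B)$ by $\sum_m\Lcal_B^m g_k+c(g_k\mathbf{1}_B)$ and then split the product $g_j\cdot\bigl(\sum_m\Lcal_B^m g_k\bigr)\in\Bcal_1\cdot\Bcal_1$. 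The mean-zero pieces pass through $\Lcal\to\Lcal_B$ with no error (the outer test function $g_1$ has zero integral), while the $H$-pieces produce, in the four families, multiples of $\int_B g_ig_j\dd\mathm$, of $c(g_k\mathbf{1}_B)$, and of $D(g_k):=\sum_n\int_B g_k\Lcal^n(H\mathbf{1}_B)\dd\mathm-\sum_n\int_B g_k\Lcal_B^n H\dd\mathm$. Summing over $\Alt$ and collecting, these residuals reorganize into $\tfrac12\sum_{\Alt}\sigma^2_{GK}(B,\mathm_{|B},T_B;g_1,g_2)\bigl(D(g_3)+c(g_3\mathbf{1}_B)\bigr)$; since one checks directly that $D(g_3)+c(g_3\mathbf{1}_B)=\sigma^2_{GK}(A,\mathm,T;g_3\mathbf{1}_B,H\mathbf{1}_B)-\sigma^2_{GK}(B,\mathm_{|B},T_B;g_3,H)$, and a last application of Proposition~\ref{prop:GKCobord3} (mixing of $(H\mathbf{1}_B,C(f_3))$, together with~\eqref{eq:Cohomologue}) replaces $\sigma^2_{GK}(A,\mathm,T;g_3\mathbf{1}_B,H\mathbf{1}_B)$ by $\sigma^2_{GK}(A,\mathm,T;f_3,H\mathbf{1}_B)$, this residual is exactly the bracketed term on the second line of~\eqref{invarianceinduction3}. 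Combining with Step 1 gives the theorem; the well-definedness of $\tau^3(B,\mathm_{|B},T_B;\Sigma_B(f_1),\Sigma_B(f_2),\Sigma_B(f_3))$ falls out of the computation.

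The main obstacle is the bookkeeping in Step 2: one must carry every $\boxtimes$-splitting and every constant $c(\cdot)$ through the four term-families of Lemma~\ref{lem:FormulaTau3} and verify, after symmetrizing over permutations, that the residuals assemble into precisely the claimed correction (in particular that the non-symmetric degree-$3$ family contributes the $\sum_n\int_B g_b\Lcal_B^n g_c$ and $D(g_a)$, $c(g_a\mathbf{1}_B)$ pieces in the right combination). Tied to this is the justification of all rearrangements and of the convergence of the single and double series: this is exactly where the assumed summability of $\norm{\Lcal_B^n}{\Bcal_1\to\Bcal_1}$, $\norm{\mathbf{1}_B\Lcal^n}{\Bcal_2\to\Lbb^{3/2}(B,\mathm_{|B})}$ and $\norm{\Lcal_B^n}{\Bcal_2\to\Lbb^{3/2}(B,\mathm_{|B})}$ is used — it makes sense of the composite $\Bcal_1\times\Bcal_1\xrightarrow{\boxtimes}\Bcal_2\xrightarrow{\mathbf{1}_B\Lcal^n}\Lbb^{3/2}(B,\mathm_{|B})$ paired against $g_1\in\Bcal_1\subset\Lbb^3(B,\mathm_{|B})$, and of $\widehat\psi$ and $\Gamma_B(g_k):=\sum_m\Lcal_B^m g_k$ — while the remaining convergences follow from the absolute-convergence and $\sigma^2_{GK}$-convergence assumptions in the statement.
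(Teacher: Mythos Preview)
Your proposal is correct and follows essentially the same two-step architecture as the paper: cohomological reduction via Lemma~\ref{lem:tau3sigma2} and Proposition~\ref{prop:GKCobord3}, then term-by-term comparison of the four families in Lemma~\ref{lem:FormulaTau3} using the identity~\eqref{ggg} together with the $\boxtimes$-splitting. The only organisational difference is in the final assembly: the paper unpacks $C_0(\Sigma_B(f_3))$ explicitly with Lemma~\ref{lem:GKCobord1} and carries the mixing constant $\alpha(H\mathbf{1}_B,C(f_i))$ through the computation of $a_3$ and $a_4$, whereas you keep the residuals packaged as $D(g_3)+c(g_3\mathbf{1}_B)$, recognise this sum directly as $\sigma^2_{GK}(A,\mathm,T;g_3\mathbf{1}_B,H\mathbf{1}_B)-\sigma^2_{GK}(B,\mathm_{|B},T_B;g_3,H)$, and only then invoke Proposition~\ref{prop:GKCobord3} to pass from $g_3\mathbf{1}_B$ to $f_3$. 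Your packaging is a bit cleaner, but the underlying computation (the six residual lines~\eqref{eq:Tau3Ligne1}--\eqref{eq:Tau3Ligne6} and their pairing under $\sum_{\Alt}$) is identical, and you rightly flag that this bookkeeping is where the real work lies.
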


\begin{remark}
Assume that $\mathm(B)<\infty$, and take $H=\mathbf{1}_B/\mathm(B)$. Since $\Lcal_B\mathbf{1}_B = \mathbf{1}_B$ 
and $\Sigma_B(f_i)$ has null integral, the term $\sigma^2\left(B,\mathm_B,T_B;\Sigma_B(f_3),H\right)$ 
vanishes in Equation~\eqref{invarianceinduction}.

\smallskip

Assume that $\mathm(A)<\infty$, and take $H=\varphi_B /\mathm(A)$. Assume moreover that $(A, \mathm,T)$ 
mixes $(\Sigma_B(f_i),C(\mathbf{1}_A))$. Then the term $\sigma_{GK}^2\left(A,\mathm,T;f_3,H\mathbf{1}_B\right)$ 
vanishes in Equation~\eqref{invarianceinduction}. This follows from Proposition~\ref{prop:GKCobord3}, 
since $\Sigma_B(\mathbf{1}_A)=\varphi_B$ and $f_i$ has null integral.
\end{remark}

Similarly to the proof of Theorem~\ref{theo:InvarianceGreenKubo}, the scheme of our proof of 
Theorem~\ref{theo:Invariancetau3} consists in the following steps:
\begin{itemize}
\item we show the invariance of $\tau^3$ under addition of a coboundary. Then, 
by Equation~\eqref{eq:Cohomologue} and the mixing assumptions, 
\begin{equation*}
 \tau^3 (A, \mathm, T; f_1,f_2,f_3) 
 = \tau^3 (A, \mathm, T; \Sigma_B(f_1)\mathbf{1}_B,\Sigma_B(f_2)\mathbf{1}_B,\Sigma_B(f_3)\mathbf{1}_B)\, .
\end{equation*}
\item by Proposition~\ref{prop:InvarianceInduction} and Lemma~\ref{lem:Liouville}, for any $p \in [1, \infty]$ 
and any $g \in \Lbb^p (B, \mathm_{|B})$ such that $\sum_{n\ge 0}\norm{\Lcal_B^n g}{\Lbb^p(B,\mathm_{|B})}<\infty$ 
and $\sum_{n\ge 0}\norm{\Lcal^n\left(g\mathbf{1}_B\right)}{\Lbb^p(A,\mathm)}<\infty$, 
there exists a constant $C_0(g)$ such that:
\begin{equation}
 \label{ggg}
 \left(\sum_{n\ge 0}\Lcal^n\left(g \mathbf{1}_B\right)\right)_{|B}
 = \sum_{n\ge 0}\Lcal_B^n\left(g\right)+C_0(g)\, .
\end{equation}
\end{itemize}

\begin{proof}[Proof of Theorem \ref{theo:Invariancetau3}]

Recall that $f_i=\Sigma_B(f_i)\mathbf{1}_B+C(f_i)\circ T-C(f_i)$, 
using the notation $C(\cdot)$ introduced in Equation~\eqref{eq:DefinitionCobord}.
Using successively Lemma~\ref{lem:tau3sigma2} and Proposition~\ref{prop:GKCobord3}, 
the mixing assumption leads to:
\begin{align*}
 \tau^3 (A, \mathm, T; f_1,f_2,f_3)
 & = \tau^3 (A, \mathm, T; f_1,f_2,\Sigma_B(f_3)\mathbf{1}_B)\\
 & = \tau^3 (A, \mathm, T; f_1,\Sigma_B(f_2) \mathbf{1}_B,\Sigma_B(f_3)\mathbf{1}_B)\\
 & = \tau^3 (A, \mathm, T; \Sigma_B(f_1)\mathbf{1}_B,\Sigma_B(f_2) \mathbf{1}_B,\Sigma_B(f_3)\mathbf{1}_B)\, .
\end{align*}
This finishes the first step of the proof.

\smallskip

Let us denote by $a_1$, $a_2$, $a_3$, $a_4$ the successive terms inside the sum $\sum_{\Alt}$ 
in the formula of $\tau^3 (A, \mathm, T; \Sigma_B(f_1)\mathbf{1}_B,\Sigma_B(f_2) \mathbf{1}_B,\Sigma_B(f_3)\mathbf{1}_B)$
given by Lemma~\ref{lem:FormulaTau3}. Notice that
\begin{equation}
 \label{eq:A1}
 a_1 
 := \int_A\prod_{i=1}^3 \left( \Sigma_B(f_i)\mathbf{1}_B \right) \dd \mathm 
 = \int_B\prod_{i=1}^3\Sigma_B(f_i)\dd \mathm\, .
\end{equation}

Applying Equation~\eqref{ggg} with $g=\Sigma_B(f_3)$ and $p=3$, the second term can be written:
\begin{align}
 a_2
 & :=\sum_{n \geq 0} \int_A \Sigma_B(f_1)\mathbf{1}_B\Sigma_B(f_2)\mathbf{1}_B \cdot \Lcal^n (\Sigma_B(f_3)\mathbf{1}_B) \dd \mathm\nonumber\\
 & =\sum_{n \geq 0} \int_B \Sigma_B(f_1)\Sigma_B(f_2) \cdot \Lcal_B^n (\Sigma_B(f_3)) \dd \mathm + C_0(\Sigma_B(f_3)) \int_B \Sigma_B(f_1)\Sigma_B(f_2)\dd \mathm \, . \label{eq:A2}
\end{align}

The third term can be rewritten as follows:
\begin{align}
 a_3
 & :=\sum_{n \geq 0} \int_A \Sigma_B(f_1)\mathbf{1}_B\Lcal^n\left(\Sigma_B(f_2)\mathbf{1}_B \Sigma_B(f_3)\mathbf{1}_B\right) \dd \mathm\nonumber\\
 & =\sum_{n \geq 0} \int_B \Sigma_B(f_1)\Lcal^n\left(\Sigma_B(f_2)\boxtimes \Sigma_B(f_3)\mathbf{1}_B\right) \dd \mathm\nonumber\\
 & \hspace{4em} + \int_B\Sigma_B(f_2)\Sigma_B(f_3)\dd\mathm \cdot \sum_{n\ge 0}\int_B\Sigma_B(f_1)\Lcal^n( H\mathbf{1}_B)\dd\mathm\, .\nonumber
\end{align}
Applying Equation~\eqref{ggg} with $g=\Sigma_B(f_2) \boxtimes\Sigma_B(f_3)$ and $p=3/2$, since $\Sigma_B(f_1)$ has null integral, 
we get:
\begin{align*}
 \sum_{n \geq 0} \int_B \Sigma_B(f_1) & \Lcal^n\left(\Sigma_B(f_2)\boxtimes \Sigma_B(f_3)\mathbf{1}_B\right) \dd \mathm \\
 & =\sum_{n \geq 0} \int_B \Sigma_B(f_1)\Lcal_B^n\left(\Sigma_B(f_2)\boxtimes \Sigma_B(f_3)\right) \dd \mathm \\
 & = \sum_{n \geq 0} \int_B \Sigma_B(f_1)\Lcal_B^n\left(\Sigma_B(f_2) \Sigma_B(f_3)\right) \dd \mathm \\
 & \hspace{4em} - \int_B \Sigma_B(f_2) \Sigma_B(f_3) \dd \mathm \cdot \sum_{n \geq 0} \int_B \Sigma_B(f_1)\Lcal_B^n (H) \dd \mathm \, .
\end{align*}
Moreover, due to Lemma~\ref{lem:GKCobord2}, 
\begin{align*}
 \sum_{n\ge 0}\int_B\Sigma_B(f_1)\Lcal^n (H\mathbf{1}_B)\dd\mathm
 & = \sum_{n\ge 0}\int_Af_1\Lcal^n  ( H\mathbf{1}_B)\dd\mathm + \alpha (H \mathbf{1}_B, C(f_1))-\int_A C(f_1)H \mathbf{1}_B\dd\mathm \\
 & = \sum_{n\ge 0}\int_Af_1\Lcal^n  ( H\mathbf{1}_B)\dd\mathm+\alpha (H \mathbf{1}_B, C(f_1))\, ,
\end{align*}
since $C(f_1)$ is supported on $A\setminus B$. Therefore:

\begin{align}
 a_3 
 & = \sum_{n \geq 0} \int_B \Sigma_B(f_1)\Lcal_B^n\left(\Sigma_B(f_2)\Sigma_B(f_3)\right) \dd \mathm \label{eq:A3}\\
 & \hspace{4em} + \sum_{n\geq 0}\left(\int_Af_1\Lcal^n(H\mathbf{1}_B)\dd\mathm -\int_B\Sigma_B(f_1)\Lcal_B^n(H)\dd\mathm\right)\int_B \Sigma_B(f_2)\Sigma_B(f_3)\dd\mathm \nonumber \\
 & \hspace{4em} + \int_B \Sigma_B(f_2) \Sigma_B(f_3) \dd \mathm \cdot \alpha (H \mathbf{1}_B, C(f_1)) \, .\nonumber
\end{align}

Now, applying Equation~\eqref{ggg} with $g=\Sigma_B(f_3)$ and $p=3/2$,
we rewrite the last term as follows:
\begin{align*}
 a_4
 & := \sum_{n,m \geq 0} \int_A \Sigma_B(f_1)\mathbf{1}_B\Lcal^n\left(\Sigma_B(f_2)\mathbf{1}_B  \Lcal^m (\Sigma_B(f_3)\mathbf{1}_B)\right) \dd \mathm\\
 & =\sum_{n,m \geq 0} \int_B \Sigma_B(f_1)\Lcal^n\left(\Sigma_B(f_2)\mathbf{1}_B  \Lcal_B^m (\Sigma_B(f_3))\right)\dd \mathm \\
 & \hspace{4em} + \sum_{n \geq 0} \int_B \Sigma_B(f_1)\Lcal^n\left(\Sigma_B(f_2)\mathbf{1}_B\right) \dd \mathm \cdot C_0(\Sigma_B(f_3))\\
 & = \sum_{n,m \geq 0} \int_B \Sigma_B(f_1)\Lcal^n\left(\mathbf{1}_B\left(\Sigma_B(f_2) \boxtimes \Lcal_B^m (\Sigma_B(f_3))\right)\right)\dd \mathm\\
 & \hspace{4em} + \sum_{n \geq 0} \int_B \Sigma_B(f_1) \cdot \Lcal^n (H\mathbf{1}_B)\dd \mathm \cdot \sum_{m\geq 0}\int_B \Sigma_B(f_2)\Lcal_B^m (\Sigma_B(f_3))\dd \mathm\\
 & \hspace{4em} + \sum_{n \geq 0} \int_B \Sigma_B(f_1) \cdot \Lcal_B^n \left(\Sigma_B(f_2)\right) \dd \mathm \cdot C_0(\Sigma_B(f_3))\, .
\end{align*}

Applying Equation~\eqref{ggg} with $g=\Sigma_B(f_2)\boxtimes \Lcal_B^m (\Sigma_B(f_3))$ and $p=3/2$, 
and using the fact that $\Sigma_B(f_1)$ has hull integral, we can replace $\Lcal^n(\mathbf{1}_B \ldots)$ by $\Lcal_B^n (\ldots)$ in the above formula:
\begin{align*}
 \sum_{n,m \geq 0} \int_B \Sigma_B(f_1)\Lcal^n & \left(\mathbf{1}_B\left(\Sigma_B(f_2) \boxtimes \Lcal_B^m (\Sigma_B(f_3))\right)\right)\dd \mathm \\
 & = \sum_{n,m \geq 0} \int_B \Sigma_B(f_1)\Lcal_B^n\left(\Sigma_B(f_2) \boxtimes \Lcal_B^m (\Sigma_B(f_3))\right)\dd \mathm \\
 & = \sum_{n,m \geq 0} \int_B \Sigma_B(f_1)\Lcal_B^n\left(\Sigma_B(f_2) \Lcal_B^m (\Sigma_B(f_3))\right)\dd \mathm \\
 & \hspace{4em} - \sum_{n,m \geq 0} \int_B \Sigma_B(f_1) \Lcal_B^n (H) \dd \mathm \cdot \sum_{m\geq 0}\int_B \Sigma_B(f_2)\Lcal_B^m (\Sigma_B(f_3))\dd \mathm \, .
\end{align*}

In addition, $\Sigma_B (f_1) = f_1+C(f_1) \circ T - C(f_1)$, so that, by Lemma~\ref{lem:GKCobord2}:
\begin{align*}
 \sum_{n \geq 0} \int_B \Sigma_B(f_1) \cdot \Lcal^n (H\mathbf{1}_B)\dd \mathm 
 & = \sum_{n \geq 0} \int_A f_1 \cdot \Lcal^n (H\mathbf{1}_B)\dd \mathm +\alpha (C(f_1), H\mathbf{1}_B) - \int_B C(f_1) H\dd \mathm \\
 & = \sum_{n \geq 0} \int_A f_1 \cdot \Lcal^n (H\mathbf{1}_B)\dd \mathm +\alpha (C(f_1), H\mathbf{1}_B) \, , \\
\end{align*}
again using the fact that $C(f_1) \equiv 0$ on $B$. Finally, we get:
\begin{align*}
 a_4
 & = \sum_{n,m \geq 0} \int_B \Sigma_B(f_1)\Lcal_B^n\left(\Sigma_B(f_2) \Lcal_B^m (\Sigma_B(f_3))\right)\dd \mathm \\
 & \hspace{4em} + \sum_{n \geq 0} \left( \int_A f_1 \Lcal^n (H\mathbf{1}_B)\dd \mathm - \int_B \Sigma_B(f_1)\Lcal_B^n (H)\dd \mathm \right) \sum_{m\geq 0}\int_B \Sigma_B(f_2)\Lcal_B^m (\Sigma_B(f_3))\dd \mathm \\
 & \hspace{4em} + \sum_{n \geq 0} \int_B \Sigma_B(f_1) \cdot \Lcal_B^n \left(\Sigma_B(f_2)\right) \dd \mathm \cdot C_0(\Sigma_B(f_3)) \\
 & \hspace{4em} + \alpha (C(f_1), H\mathbf{1}_B) \sum_{m\geq 0}\int_B \Sigma_B(f_2)\Lcal_B^m (\Sigma_B(f_3))\dd \mathm\,.
\end{align*}

Combining this with Equations~\eqref{eq:A1}, \eqref{eq:A2} and~\eqref{eq:A3}, 
with the weights given by Lemma~\ref{lem:FormulaTau3}, yields:
\begin{align}
& \tau^3 (A, \mathm, T; f_1,f_2,f_3) - \tau^3 (B, \mathm_{|B}, T_B ; \Sigma_B (f_1), \Sigma_B (f_2), \Sigma_B (f_3)) \nonumber \\
& = \sum_{\Alt}\left[-\frac{1}{2}\int_B \Sigma_B(f_1)\Sigma_B(f_2)\dd \mathm \cdot C_0(\Sigma_B(f_3)) \right. \label{eq:Tau3Ligne1} \\
& \hspace{4em} -\frac{1}{2} \sum_{n \geq 0} \left(\int_Af_1\Lcal^n(H\mathbf{1}_B)\dd\mathm - \int_B\Sigma_B(f_1)\Lcal_B^n(H)\dd\mathm\right) \int_B\Sigma_B(f_2)\Sigma_B(f_3)\dd\mathm \label{eq:Tau3Ligne2} \\
& \hspace{4em} -\frac{1}{2} \alpha(H \mathbf{1}_B, C(f_1)) \int_B\Sigma_B(f_2)\Sigma_B(f_3)\dd\mathm \label{eq:Tau3Ligne3} \\
& \hspace{4em} + \sum_{n \geq 0} \int_B \Sigma_B(f_1)\Lcal_B^n\left(\Sigma_B(f_2)\right) \dd \mathm \cdot C_0(\Sigma_B(f_3)) \label{eq:Tau3Ligne4} \\
& \hspace{4em} + \sum_{n \geq 0} \left(\int_Af_1\Lcal^n(H\mathbf{1}_B)\dd\mathm -\int_B\Sigma_B(f_1)\Lcal_B^n(H)\dd\mathm\right) \sum_{m\geq 0} \int_B \Sigma_B(f_2)\Lcal_B^m (\Sigma_B(f_3))\dd \mathm \label{eq:Tau3Ligne5} \\
& \hspace{4em} \left.+ \alpha (H\mathbf{1}_B, C(f_1)) \sum_{m\geq 0}\int_B \Sigma_B(f_2)\Lcal_B^m (\Sigma_B(f_3))\dd \mathm\right]\, . \label{eq:Tau3Ligne6}
\end{align}
In the alternated sum above, we put together the lines~\eqref{eq:Tau3Ligne1} and~\eqref{eq:Tau3Ligne4} 
and the permutations $(f_i, f_j, f_k)$ and $(f_j, f_i, f_k)$. We also put together the lines~\eqref{eq:Tau3Ligne2} and~\eqref{eq:Tau3Ligne5} 
and the permutations $(f_i, f_j, f_k)$ and $(f_i, f_k, f_j)$. Finally, we put together the lines~\eqref{eq:Tau3Ligne3} and~\eqref{eq:Tau3Ligne6} 
and the permutations $(f_i, f_j, f_k)$ and $(f_i, f_k, f_j)$. This yields:
\begin{align}
& \tau^3 (A, \mathm, T; f_1,f_2,f_3) - \tau^3 (B, \mathm_{|B}, T_B ; \Sigma_B (f_1), \Sigma_B (f_2),  \Sigma_B (f_3)) \nonumber\\
& = \frac{1}{2} \sum_{\Alt} \sum_{n\geq 0}\left(\int_Af_1\Lcal^n(H\mathbf{1}_B)\dd\mathm -\int_B\Sigma_B(f_1)\Lcal_B^n(H)\dd\mathm\right)  \sigma^2_{GK}(B,\mathm_{|B},T_B; \Sigma_B(f_2),\Sigma_B(f_3)) \nonumber \\
& \hspace{4em} +\frac{1}{2}\sum_{\Alt} \sigma^2_{GK}(B,\mathm_{|B},T_B; \Sigma_B(f_1),\Sigma_B(f_2)) \cdot C_0(\Sigma_B(f_3)) \label{Afinal1} \\
& \hspace{4em} +\frac{1}{2}\sum_{\Alt} \sigma^2_{GK}(B,\mathm_{|B},T_B; \Sigma_B(f_2),\Sigma_B(f_3)) \cdot \alpha(H \mathbf{1}_B, C(f_1)) \, . \nonumber
\end{align}
By Lemma~\ref{lem:GKCobord1}, and using the mixing conditions,
\begin{align*}
 C_0(\Sigma_B(f_3))
 & = \sum_{n\ge 0}\int_B H\Lcal^n\left(\Sigma_B(f_3) \mathbf{1}_B\right)\dd\mathm-\sum_{n\ge 0}\int_B H\Lcal_B^n\left(\Sigma_B(f_3)\right)\dd\mathm\\
 & = \sum_{n \geq 0}  \int_B H \Lcal^n (f_3) \dd \mathm + \int_B H C(f_3)\circ T \dd \mathm \\
 & \hspace{4em} - \alpha(H\mathbf{1}_B, C(f_3)) -\sum_{n\ge 0}\int_B H\Lcal_B^n\left(\Sigma_B(f_3)\right)\dd\mathm\\
 & = \sum_{n \geq 0} \int_B H\Lcal^n (f_3) \dd \mathm +\int_BH \Sigma_B(f_3) \dd \mathm \\
 & \hspace{4em} -\int_BH f_3 \dd \mathm -\alpha(H\mathbf{1}_B, C(f_3)) -\sum_{n\ge 0}\int_B H\Lcal_B^n\left(\Sigma_B(f_3)\right)\dd\mathm\, ,
\end{align*}
since $C(f_3)\circ T=\Sigma_B(f_3)-f_3$ on $B$. This, combined with Equation~\eqref{Afinal1}, leads to Equation~\eqref{invarianceinduction3}.
\end{proof}

\subsection{Invariants for a Bernoulli scheme}

Let us show how the invariants $\sigma^2$ and $\tau^3$ play out on a simple example : 
a Bernoulli scheme.
We shall see that the conclusions of Theorems~\ref{theo:InvarianceGreenKubo} and~\ref{theo:Invariancetau3} hold true
on a simple example  by a straighforward computation.

\smallskip

Let $A := \{0,1\}^\N$ and $T$ be the one-sided shift on A. 
Fix $p \in (0,1)$, and let $\mu := (p \delta_0+(1-p)\delta_1)^{\otimes \N}$. 
Let $f (\omega):= -(1-p) \mathbf{1}_0 (\omega_0) + p \mathbf{1}_1 (\omega_0)$. 
Then $\int_A f \dd \mu = 0$, and the random variables $(f \circ T^n)_{n \geq 0}$ 
are i.i.d..

\smallskip

Let us induce on the set $B = \{0\} \times \{0,1\}^{\N_+}$, with $\mu (B) = p$. Then 
$\Sigma_B (f) = -(1-p)+pG$, where the distribution of 
$G+1$
under $\mu ( \cdot | B)$ 
is geometric of parameter $p$. By the strong Markov property, 
the random variables $(\Sigma_B (f) \circ T_B^n)_{n \geq 0}$ are i.i.d..

\smallskip

By Kac's formula,
\begin{equation*}
 0 
 = \int_A f \dd \mu 
 = \int_B \Sigma_B (f) \dd \mu 
 = \mu (B) \left[p \Ebb (G)-(1-p) \right],
\end{equation*}
so we recover $\Ebb (G) = \frac{1-p}{p}$.

\smallskip

Assuming we can use Theorem~\ref{theo:InvarianceGreenKubo}, 
\begin{align*}
 p(1-p) 
 & = \int_A f^2 \dd \mu 
 = \sigma^2_{GK} (A, \mathm, T; f,f) \\
 & = \sigma^2_{GK} (B, \mathm_{|B}, T_B; \Sigma_B (f),\Sigma_B (f)) 
 = \mu(B) \Ebb [(pG-(1-p))^2],
\end{align*}
which yields the correct identity $\Var (G) = \frac{1-p}{p^2}$.

\smallskip

Finally, assuming we can use Theorem~\ref{theo:Invariancetau3} 
with $H = \varphi_B = 1+G$,
\begin{align*}
 p(1-p)(2p-1)
 & = \int_A f^3 \dd \mu 
 = \tau^3 (A, \mathm, T; f,f,f) \\
 & = \tau^3 (B, \mathm_{|B}, T_B; \Sigma_B (f),\Sigma_B (f), \Sigma_B (f)) \\ 
 & \hspace{2em} - 3 \sigma^2_{GK} (B, \mathm_{|B}, T_B; \Sigma_B (f),\Sigma_B (f)) \sigma^2_{GK} (B, \mathm_{|B}, T_B; \Sigma_B (f),1+G) \\
 & = \mu(B) \Ebb [(pG-(1-p))^3] - 3 \mu(B)^2 \Ebb [(pG-(1-p))^2] \Ebb [(pG-(1-p))(1+G)] \\
 & = p \Ebb [(pG-(1-p))^3] - 3 p (1-p) \Ebb [(pG-(1-p))^2] \\
 & = p \Ebb [(pG-(1-p))^3] - 3 p (1-p)^2,
\end{align*}
which yields the correct identity 
\begin{equation*}
 \Ebb [(G-\Ebb (G))^3]
 = \frac{(1-p)(2-p)}{p^3}.
\end{equation*}

More generally, and up to checking the hypotheses of Theorems~\ref{theo:InvarianceGreenKubo} 
and~\ref{theo:Invariancetau3}, this method yields closed forms for the first $3$ moments of $\Sigma_B (f)$, 
when $f$ is an observable of a finite-state Markov chain with null integral and $B$ is a single state.

\section{The distributional point of view}
\label{sec:Distributions}

The invariance under induction of Green-Kubo's formula, Theorem~\ref{theo:InvarianceGreenKubo}, 
was proved for a small class of dynamical systems preserving an infinite measure in~\cite{PeneThomine:2017}. 
The proof relied on a probabilistic interpretation of this formula, as the asymptotic variance 
in a limit theorem. We now expand on this point of view, and give criterions -- distinct from 
Theorem~\ref{theo:InvarianceGreenKubo} -- to prove the invariance of Green-Kubo's formula.

\smallskip

In this section, we always assume that $(A, \mathm, T)$ is ergodic and recurrent, 
and that $B \subset A$ has positive finite measure.

\subsection{Invariance under induction of limits of Birkhoff sums}

Assume first that $f$ is non-negative. By Hopf's ergodic theorem~\cite[\S$14$, Individueller Ergodensatz f\"ur Abbildungen]{Hopf:1937}, 
$\mathm$-almost surely and $\mathm_{|B}$-almost surely respectively,
\begin{equation}
 \label{eq:Hopf1}
 \lim_{n \to +\infty}\frac{\sum_{k=0}^{n-1} f\circ T^k}{\sum_{k=0}^{n-1} \mathbf{1}_B \circ T^k} 
 = \frac{\int_A f \dd \mathm}{\mathm (B)},
\end{equation}
\begin{equation}
 \label{eq:Hopf2}
 \lim_{n \to +\infty}\frac{\sum_{k=0}^{n-1} \Sigma_B(f) \circ T_B^k}{\sum_{k=0}^{n-1} \mathbf{1}_B \circ T_B^k} 
 = \frac{\int_B \Sigma_B(f) \dd \mathm}{\mathm (B)}.
\end{equation}
For almost every $x \in B$, taking the limit in Equation~\eqref{eq:Hopf1} along the subsequence $(\sum_{k=0}^{n-1} \varphi_B \circ T_B^k (x))_{n \geq 0}$ yields:
\begin{equation*}
 \lim_{n \to +\infty} \frac{\sum_{k=0}^{(\sum_{k=0}^{n-1} \varphi_B \circ T_B^k)-1} f\circ T^k}{\sum_{k=0}^{(\sum_{k=0}^{n-1} \varphi_B \circ T_B^k)-1} \mathbf{1}_B \circ T^k} 
 = \frac{\int_A f \dd \mathm}{\mathm (B)}.
\end{equation*}
Since $\sum_{k=0}^{(\sum_{\ell=0}^{n-1} \varphi_B \circ T_B^\ell)-1} f\circ T^k = \sum_{k=0}^{n-1} \Sigma_B(f) \circ T_B^k$, we get:
\begin{equation*}
 \lim_{n \to +\infty} \frac{\sum_{k=0}^{n-1} \Sigma_B(f) \circ T_B^k}{\sum_{k=0}^{n-1} \mathbf{1}_B \circ T_B^k} 
 = \frac{\int_A f \dd \mathm}{\mathm (B)}.
\end{equation*}
Comparing this limit with that of Equation~\eqref{eq:Hopf2} yields $\int_B \Sigma_B(f) \dd \mathm = \int_A f \dd \mathm$. This holds 
for any positive function $f$. As any integrable function is the difference of two positive integrable functions, we get the 
equality of the integrals for any $f \in \Lbb^1 (A, \mathm)$.

\smallskip

The same result holds true in continuous time, for suspension semi-flows.
Let us consider a  suspension semi-flow  $(\Mcal,\mu,(Y_t)_t)$ over 
a dynamical system $(M,\nu,T)$ with roof function $\varphi: M \to (0,+\infty)$. 
In other words,
\begin{itemize}
 \item $\Mcal =\{(x,s)\in M\times [0,+\infty)\}$ with the identification $(x,s+\varphi(x)) \sim (T(x),s)$,
 \item $\mu=\nu\otimes\Leb$ on the fundamental domain $\{(x,s)\in M\times [0,+\infty) : \ s \leq \varphi (x)\}$,
 \item $Y_t(x,s)=(x,t+s)$ for all $(x,s) \in \Mcal$ and $t \geq 0$.
\end{itemize}
Let $f \in \Lbb^1 (\Mcal, \mu)$. Set $\Sigma_B (f)(x) := \int_0^{\varphi(x)} f(x,s) \dd s$. 
Using the same reasoning (with Hopf's ergodic theorem for semi-flows, and 
with $g (x,s) = \varphi(x)^{-1}$ instead of $\mathbf{1}_B$ in the denominator), 
we get:
\begin{equation*}
 \int_M \Sigma_B(f) \dd \mathm 
 = \int_\Mcal f \dd \mu.
\end{equation*}

As mentioned before, the Green-Kubo formula is a bilinear form which shares with the integral, 
at least in some cases, such an invariance under induction (Theorem~\ref{theo:InvarianceGreenKubo}). 
We now present a distributional argument for this invariance.

\subsection{Invariance under induction of the Green-Kubo formula}
\label{subsec:GreenKuboInvariance}

A short computation (see e.g.\ the proof of \cite[Lemma~A.2]{PeneThomine:2017}) shows that
\begin{equation}
 \label{formulavar1}
 \sigma^2_{GK} (A, \mathm, T;f,f)
 = \lim_{n\rightarrow +\infty} \int_A \left(\frac{1}{\sqrt{n}}\sum_{k=0}^{n-1}f\circ T^k\right)^2 \dd \mathm \, ,
\end{equation}
where the limit in the right hand-side is taken in the Ces\`aro sense. In particular, 
when $\mathm$ is a probability measure and $\left| \int_A f\cdot f\circ T^n \dd\mathm\right|$ 
is summable, $\sigma^2_{GK} (A, \mathm, T;f,f)$ is the limit of the variances of the Birkhoff sums.

\smallskip

For some systems with $\mathm$ infinite, it has been shown in~\cite{PeneThomine:2017} that 
\begin{equation}\label{formulavar2}
 \sigma^2_{GK} (A, \mathm, T;f,f)
 = \lim_{n\rightarrow +\infty} \frac{\int_A \left(\sum_{k=0}^{n-1}f\circ T^k\right)^2\, h \dd \mathm}   {\int_A \left(\sum_{k=0}^{n-1}h\circ T^k\right)\, h \dd \mathm} \, ,
\end{equation}
for some density probability $h$ with respect to $\mathm$.
Note that, when $\mathm$ is a probability measure and $h = \mathbf{1}_A$,
Equation~\eqref{formulavar2} and~\eqref{formulavar1} coincide.

\smallskip

Moreover, as explained in~\cite{Pene:2018}, for a wide family of $\Z^d$-extensions and for suitable null-integral observables,
$\int_A f \cdot f\circ T^n \dd \mathm = O(n^{1-\frac{d}{2}})$ for $d \neq 2$, and 
$O (\ln (n))$ for $d=2$. Hence, in both cases above, $\sigma^2_{GK} (A, \mathm, T ; f)$
is the asymptotic variance of $\left(\frac{1}{\mathfrak{a}_n}\sum_{k=0}^{n-1}f\circ T^k\right)_{n \geq 0}$ 
with respect to some probability measure $h \dd \mathm$ and for some sequence $(\mathfrak{a}_n)_{n \geq 0}$ 
which diverges to $+\infty$.

\smallskip

Assume that $\mathm(A)=1$, and consider a measurable subset $B$ of $A$ of $\mathm$-positive measure. Our goal is to use 
this interpretation of the Green-Kubo formula in order to establish sufficient conditions ensuring that
\begin{equation*}
 \sigma^2_{GK} (A, \mathm, T;f,f)
 = \sigma^2_{GK} (B, \mathm_{|B}, T_B ; \Sigma_B (f), \Sigma_B (f)).
\end{equation*}

Inducing the system speeds up the process $(\sum_{k=0}^{n-1} f\circ T^k)_{n \geq 0}$. 
If both the initial system $(A, \mathm, T)$ and the induced system $(B, \mathm_B, T_B)$ satisfy a central limit 
theorem with respective variances $\sigma^2_{GK} (A, \mathm, T;f,f)$ and $\mathm (B)^{-1} \sigma^2_{GK} (B, \mathm_{|B}, T_B ; \Sigma_B (f), \Sigma_B (f))$, 
and if we can control the time change between the corresponding processes, then we can show 
the invariance under induction of $\sigma_{GK}^2$. The control on the time-change can be achieved 
using an invariance principle (see~\cite[Chapter~14]{Billingsley:1999}).

\begin{proposition}
\label{prop:InvarianceGKDistribution}

Let $(A,\mathm, T)$ be a recurrent and ergodic \emph{probability}-preserving dynamical system. Let $B\subset A$ be a 
measurable set with positive measure. 
Let $f \in \Lbb^2 (A, \mathm)$. Assume that $\sigma^2_{GK} (A, \mathm, T;f,f)$ and 
$\sigma^2_{GK} (B, \mathm_{|B}, T_B ; \Sigma_B (f), \Sigma_B (f))$ both converge in Ces\`aro sense. 

\smallskip

Assume moreover that:
\begin{itemize}
 \item the process $(\frac{1}{\sqrt{n}}(\sum_{k=0}^{\lfloor nt\rfloor-1} \Sigma(f)\circ T_B^k)_{t \geq 0})_{n\geq 0}$ converges 
  in distribution (with respect to $\mathm(\cdot|B)$ and to the metric $\Jcal_1$) to a Brownian motion 
  of variance $\mathm (B)^{-1} \sigma^2_{GK} (B, \mathm_{|B}, T_B ; \Sigma_B (f), \Sigma_B (f))$;
 \item the sequence $(\frac{1}{\sqrt{n}} \sum_{k=0}^{n-1} f\circ T^k)_{n \geq 0}$ converges in distribution to 
   a centered Gaussian random variable with variance $\sigma^2_{GK} (A, \mathm, T;f,f)$.
\end{itemize}
Then $\sigma^2_{GK} (A, \mathm, T;f,f) = \sigma^2_{GK} (B, \mathm_{|B}, T_B ; \Sigma_B (f), \Sigma_B (f))$.
\end{proposition}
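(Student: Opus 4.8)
The plan is to produce a single sequence of random variables which, under the probability measure $\mathm(\cdot\,|\,B):=\mathm(B)^{-1}\mathbf 1_B\,\mathm$, converges in distribution to a centered Gaussian whose variance can be identified in two ways: once through the assumed central limit theorem for $f$ over $(A,\mathm,T)$, and once through the assumed functional (Skorokhod) central limit theorem for $\Sigma_B(f)$ over $(B,\mathm_{|B},T_B)$ composed with a random time change; equating the two expressions then gives the statement. For the bookkeeping, write $S_m f:=\sum_{k=0}^{m-1}f\circ T^k$, $\Sigma:=\Sigma_B(f)$, and let $N_j:=\sum_{\ell=0}^{j-1}\varphi_B\circ T_B^\ell$ be the $j$-th return time to $B$, so that $\sum_{k=0}^{j-1}\Sigma\circ T_B^k=S_{N_j}f$ on $B$. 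Introduce the excursion count $\beta_n(t):=\#\{j\ge1:\ N_j\le\lfloor nt\rfloor\}$; restricted to $B$ this is $\sum_{\ell=1}^{\lfloor nt\rfloor}\mathbf 1_B\circ T^\ell$, so by Birkhoff's theorem (using that $T$ is ergodic) together with monotonicity of $t\mapsto\beta_n(t)$, one has $\beta_n(t)/n\to\mathm(B)\,t$ $\mathm(\cdot\,|\,B)$-almost everywhere, uniformly on compact $t$-intervals. Note also that $\mathm(\cdot\,|\,B)\ll\mathm$.

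\emph{Functional CLT for $f$ along $T$, via a time change.} On $B$ one decomposes
\[
\frac{1}{\sqrt n}S_{\lfloor nt\rfloor}f=\frac{1}{\sqrt n}\sum_{k=0}^{\beta_n(t)-1}\Sigma\circ T_B^k+\frac{1}{\sqrt n}R_n(t),\qquad R_n(t):=\sum_{k=N_{\beta_n(t)}}^{\lfloor nt\rfloor-1}f\circ T^k .
\]
The first term equals $Y_n(\beta_n(t)/n)$ with $Y_n(s):=n^{-1/2}\sum_{k=0}^{\lfloor ns\rfloor-1}\Sigma\circ T_B^k$. By hypothesis $Y_n\Rightarrow W$ in $(D[0,\infty),\Jcal_1)$, with $W$ a Brownian motion of variance $\mathm(B)^{-1}\sigma^2_{GK}(B,\mathm_{|B},T_B;\Sigma,\Sigma)$; since the time changes $\beta_n(\cdot)/n$ converge uniformly on compacts to the continuous nondecreasing map $t\mapsto\mathm(B)\,t$, the random-change-of-time theorem for Skorokhod convergence to a continuous limit \cite[Chapter~14]{Billingsley:1999} yields $Y_n(\beta_n(\cdot)/n)\Rightarrow W(\mathm(B)\,\cdot)=:\widetilde W$, a Brownian motion of variance $\mathm(B)\cdot\mathm(B)^{-1}\sigma^2_{GK}(B,\mathm_{|B},T_B;\Sigma,\Sigma)=\sigma^2_{GK}(B,\mathm_{|B},T_B;\Sigma,\Sigma)$. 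For the remainder, $|R_n(t)|\le\Sigma_B(|f|)\circ T_B^{\beta_n(t)}\le\max_{0\le j\le\beta_n(a)}\Sigma_B(|f|)\circ T_B^j$ for $t\le a$, and one checks that $n^{-1/2}\max_{0\le j\le\beta_n(a)}\Sigma_B(|f|)\circ T_B^j\to0$ in probability; together these give $n^{-1/2}S_{\lfloor n\,\cdot\,\rfloor}f\Rightarrow\widetilde W$ with respect to $\mathm(\cdot\,|\,B)$. Evaluating at $t=1$, $n^{-1/2}S_n f\Rightarrow\mathcal N\bigl(0,\sigma^2_{GK}(B,\mathm_{|B},T_B;\Sigma_B(f),\Sigma_B(f))\bigr)$ under $\mathm(\cdot\,|\,B)$.

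\emph{The other expression for the variance, and conclusion.} By the second assumption, $n^{-1/2}S_n f\Rightarrow\mathcal N\bigl(0,\sigma^2_{GK}(A,\mathm,T;f,f)\bigr)$ with respect to $\mathm$. Since $n^{-1/2}(S_nf\circ T-S_nf)=n^{-1/2}(f\circ T^n-f)\to0$ in $\mathm$-probability and $(A,\mathm,T)$ is ergodic, a standard argument (due to Eagleson, on the persistence of distributional limits under passage to an absolutely continuous measure) shows that the same convergence holds with respect to $\mathm(\cdot\,|\,B)$. Comparing with the previous step and invoking the uniqueness of limits in distribution forces the two centered Gaussians to coincide, i.e. $\sigma^2_{GK}(A,\mathm,T;f,f)=\sigma^2_{GK}(B,\mathm_{|B},T_B;\Sigma_B(f),\Sigma_B(f))$.

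\emph{Main obstacle.} The real work lies in the middle step: making the random change of time rigorous in the $\Jcal_1$ topology (which is precisely why the hypothesis is needed at the level of processes, not merely at a fixed time), and, above all, showing that the partial-excursion remainders $R_n(t)$ are $o(\sqrt n)$ uniformly on compact time sets. The latter rests on the tightness built into the assumed Skorokhod convergence of $Y_n$ — convergence to a continuous limit forces the rescaled maximal increment of $Y_n$ over a single excursion to vanish — with a little extra care required to pass from $|\Sigma_B(f)|$ to $\Sigma_B(|f|)$ (automatic, e.g., when $f\ge0$, or under mild integrability of $\Sigma_B(|f|)$).
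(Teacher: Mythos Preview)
Your approach is exactly the one the paper sketches: the paper does not give a written proof of this proposition, but the paragraph preceding it explains that one should compare the CLT for $(A,\mathm,T)$ with the invariance principle for $(B,\mathm_{|B},T_B)$ after controlling the time change, and points to \cite[Chapter~14]{Billingsley:1999} for the latter. Your decomposition $n^{-1/2}S_{\lfloor nt\rfloor}f = Y_n(\beta_n(t)/n) + n^{-1/2}R_n(t)$, the Birkhoff argument for $\beta_n(t)/n\to\mathm(B)t$, the random-time-change theorem, and the Eagleson/Zweim\"uller transfer of the distributional limit from $\mathm$ to $\mathm(\cdot|B)$ all match what the paper has in mind; the reference \cite{Zweimuller:2007} in the bibliography is precisely for that last step.

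You are also right to isolate the partial-excursion remainder $R_n$ as the genuine technical point. The hypotheses as stated give control of $n^{-1/2}\max_j|\Sigma_B(f)\circ T_B^j|$ (maximal jump of $Y_n$, vanishing because the $\Jcal_1$ limit is continuous), but $|R_n(t)|$ is a \emph{partial} sum inside an excursion and is only bounded by $\Sigma_B(|f|)\circ T_B^{\beta_n(t)}$, which is a different quantity when $f$ changes sign. The paper does not address this either; in practice one needs a mild extra input (e.g.\ $\Sigma_B(|f|)\in\Lbb^2(B,\mathm_{|B})$, or the maximal-inequality route of \cite{MelbourneTorok:2004} used in the next proposition). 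Your flagging of this as the main obstacle, and of the $|\Sigma_B(f)|$ versus $\Sigma_B(|f|)$ discrepancy, is accurate and goes beyond what the paper makes explicit.
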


Following directly \cite[Theorem 1.1]{MelbourneTorok:2004}, the control on the change of time 
can also be achieved without an invariance principle, as long as the dispersion of 
$ (\sum_{k=0}^{n-1} (\varphi_B-\mathm(B)^{-1})\circ T_B^k)_{n \geq 0}$ is controlled.

\begin{proposition}

Let $a$, $b>0$ such that $\left(1-\frac{1}{a} \right)\left(1-\frac{1}{b} \right) \geq \frac{1}{2}$.
Let $(\Mcal,\mu,(Y_t)_t)$ be the suspension semi-flow over an invertible ergodic probability-preserving 
dynamical system $(B,\mathm,T_B)$ with roof function $\varphi \in \Lbb^a(B, \mathm; \R_+^*)$. 
Let $f\in \Lbb^b(\Mcal, \mu)$ with null integral, and write $\Sigma_B (f) (x) := \int_0^{\varphi(x)} f (Y_s (x,0)) \dd s$. 

\smallskip

Assume morover that:
\begin{itemize}
 \item the sequence $(\frac{1}{\sqrt {n}}\sum_{k=0}^{n-1} \Sigma_B (f)\circ T_B^k)_{n \geq 0}$ converges in distribution (with respect to $\mathm$) 
   to a Gaussian random variable of variance $\sigma^2_B$;
 \item the sequence $\left(\frac{1}{\sqrt{t}}\int_0^t f\circ Y_s \dd s\right)_{t \geq 0}$ converges in distribution (with respect to $\mu(\Mcal)^{-1} \mu$) to 
   a Gaussian random variable of variance $\mu(\Mcal)^{-1}\sigma^2_A$;
 \item the sequence $(\sum_{k=0}^{n-1} (\varphi_B-\mu(\Mcal))\circ T_B^k)_{n \geq 0}$ converges in distribution to a Gaussian random variable.
\end{itemize}
Then $\sigma^2_A=\sigma^2_B$.
\end{proposition}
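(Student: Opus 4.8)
The plan is to mirror the proof of \cite[Theorem~1.1]{MelbourneTorok:2004} and to read off the relation between the two asymptotic variances from the time change that relates the semi-flow to its base. Set $\bar\varphi := \int_B \varphi \dd \mathm$, which is finite since $\varphi \in \Lbb^a(B,\mathm) \subset \Lbb^1(B,\mathm)$, and note that $\mu(\Mcal) = \bar\varphi$. Write $\bar\mu := \mu(\Mcal)^{-1}\mu$ for the normalised probability on $\Mcal$, and, for $n \geq 0$, $S_n := \sum_{k=0}^{n-1} \Sigma_B(f) \circ T_B^k$ and $\varphi_n := \sum_{k=0}^{n-1} \varphi \circ T_B^k$ (with $\varphi_0 := 0$). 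For a point $\omega = (x,u)$ of the fundamental domain ($0 \leq u < \varphi(x)$) and $t > 0$, let $N_t(\omega)$ be the number of complete excursions performed by the orbit $(Y_s(\omega))_{0 \leq s \leq t}$, i.e.\ the largest $n \geq 0$ with $\varphi_n(x) \leq u+t$. Cutting the integral $\int_0^t f \circ Y_s(\omega) \dd s$ along the successive returns to the base yields the identity
\begin{equation*}
 \int_0^t f \circ Y_s(\omega) \dd s
 = S_{N_t(\omega)}(x) + \rho_0(\omega) + \rho_t(\omega),
\end{equation*}
where $\rho_0(\omega) := - \int_0^u f(Y_s(x,0)) \dd s$ does not depend on $t$ and satisfies $|\rho_0(\omega)| \leq \Sigma_B(|f|)(x) < +\infty$ for $\bar\mu$-almost every $\omega$, and where $|\rho_t(\omega)| \leq \Sigma_B(|f|) \circ T_B^{N_t(\omega)}(x)$ accounts for the last, incomplete excursion.

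First I would dispose of the two remainder terms after normalisation by $\sqrt t$. The term $\rho_0(\omega)/\sqrt t$ tends to $0$ for $\bar\mu$-almost every fixed $\omega$, since $\rho_0(\omega)$ is a finite number independent of $t$. By the pointwise ergodic theorem for the ergodic system $(B,\mathm,T_B)$ one has $\varphi_n/n \to \bar\varphi$ $\mathm$-almost everywhere, hence $N_t(\omega)/t \to \bar\varphi^{-1}$ for $\bar\mu$-almost every $\omega$. That $\rho_t/\sqrt t \to 0$ in $\bar\mu$-probability is exactly the control of the fractional part of a flow integral established in the proof of \cite[Theorem~1.1]{MelbourneTorok:2004}; this is the one place where the integrability condition $\left(1 - \frac1a\right)\left(1 - \frac1b\right) \geq \frac12$ is needed, together with $\varphi \in \Lbb^a(B,\mathm)$ and $f \in \Lbb^b(\Mcal,\mu)$, through Hölder's inequality applied to $\Sigma_B(|f|) \leq \varphi^{1 - 1/b} \big( \int_0^\varphi |f(Y_s(\cdot,0))|^b \dd s \big)^{1/b}$ and a Borel--Cantelli argument along $(T_B^n)_n$. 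I expect this estimate to be the main technical point, though it is by now standard.

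Next I would pin down the time change with the help of the third hypothesis. Since $\tfrac{1}{\sqrt n}(\varphi_n - n\bar\varphi)$ converges in distribution to a Gaussian, the family $\big( \tfrac{1}{\sqrt n}(\varphi_n - n\bar\varphi) \big)_{n \geq 1}$ is tight; inverting the relation $\{N_t = n\} = \{\varphi_n \leq u + t < \varphi_{n+1}\}$ and using $N_t/t \to \bar\varphi^{-1}$ then shows that $\big( \tfrac{1}{\sqrt t}(N_t - \bar\varphi^{-1} t) \big)_{t > 0}$ is tight as well. Consequently $|N_t - \lfloor \bar\varphi^{-1} t \rfloor|$ is, with high probability, at most of order $\sqrt t$, which is negligible compared to $t$; combining this with a maximal inequality for the stationary sequence $(\Sigma_B(f) \circ T_B^k)_k$ — which follows from the first hypothesis (the central limit theorem for $\tfrac{1}{\sqrt n}S_n$) via Etemadi's inequality — yields
\begin{equation*}
 \frac{1}{\sqrt t} \bigl( S_{N_t} - S_{\lfloor \bar\varphi^{-1} t \rfloor} \bigr) \longrightarrow 0
 \quad \text{in } \bar\mu\text{-probability as } t \to +\infty.
\end{equation*}
Together with the decomposition above and the first step, this gives $\tfrac{1}{\sqrt t} \int_0^t f \circ Y_s \dd s - \tfrac{1}{\sqrt t} S_{\lfloor \bar\varphi^{-1} t \rfloor} \to 0$ in $\bar\mu$-probability. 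This replacement of a weak invariance principle by the combination of the two central limit theorems is precisely the point flagged in the remark preceding the statement.

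Finally I would match the variances. On the one hand, $\tfrac{1}{\sqrt t} \int_0^t f \circ Y_s \dd s$ converges in distribution, with respect to $\bar\mu$, to a centred Gaussian of variance $\mu(\Mcal)^{-1} \sigma^2_A$ by the second hypothesis. On the other hand, $\tfrac{1}{\sqrt t} S_{\lfloor \bar\varphi^{-1} t \rfloor} = \sqrt{ \bar\varphi^{-1} + o(1) } \cdot \tfrac{1}{\sqrt{\lfloor \bar\varphi^{-1} t \rfloor}} S_{\lfloor \bar\varphi^{-1} t \rfloor}$ converges in distribution, by the first hypothesis, to a centred Gaussian of variance $\bar\varphi^{-1} \sigma^2_B$. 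Since these two sequences have the same limit in distribution, their variances coincide: $\mu(\Mcal)^{-1} \sigma^2_A = \bar\varphi^{-1} \sigma^2_B$. As $\mu(\Mcal) = \bar\varphi$, this is exactly $\sigma^2_A = \sigma^2_B$, the desired conclusion.
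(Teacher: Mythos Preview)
Your proposal is correct and follows exactly the approach the paper indicates: the paper does not give a detailed proof of this proposition but simply states that it follows directly from \cite[Theorem~1.1]{MelbourneTorok:2004}, and you have spelled out that argument faithfully. In particular, your identification of the integrability condition $\left(1-\frac{1}{a}\right)\left(1-\frac{1}{b}\right) \geq \frac{1}{2}$ as precisely what is needed for $\Sigma_B(|f|) \in \Lbb^2(B,\mathm)$ (via the H\"older bound you wrote), and hence for the incomplete-excursion remainder to vanish after normalisation, is the key technical observation.
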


\subsection{Application to finite state Markov chains}
\label{subsec:MarkovGK}

Let $(X_n)_{n \geq 0}$ be a Markov chain on a finite state space $\Omega$ 
with an irreducible transition kernel $P$. Let $\mu$ be the unique stationary probability 
on $\Omega$ for this Markov chain, and let $\Psi \in \Omega$ be a single site.

\smallskip

Let $f : \Omega \to \R$ be such that $\int_\Omega f \dd \mu = 0$, 
and let $\Sigma_\Psi (f)$ be the sum of $f$ over an excursion from $\Psi$.

\smallskip

By Kac's formula, $\Ebb (\Sigma_\Psi (f)) = 0$. Let us consider the variance of $\Sigma_\Psi (f)$. 
The stochastic process $(\frac{1}{\sqrt{n}}\sum_{k=0}^{n-1}f(X_k))_{n \geq 0}$ satisfies an invariance principle. 
In addition, the tails of the random variable $\Sigma_\Psi (f)$ decay exponentially fast, 
and the excursions are independent. Hence, we can apply Proposition~\ref{prop:InvarianceGKDistribution}:
\begin{align*}
 \Var(\Sigma_\Psi (f)) 
 & = \mu(\Psi) \left( -\int_\Omega f^2 \dd \mu+2\sum_{n \geq 0} \int_\Omega f\cdot (P^*)^n (f) \dd \mu \right) \\
 & = \mu(\Psi) \int_\Omega f \cdot (2(I-P)^{-1}-I) (f) \dd \mu \, ,
\end{align*}
where the sum in the first line converge in Ces\`aro sense, and $(I-P)^{-1}$ is the inverse of $(I-P)$ 
on functions with null expectation.

\subsection{Application to random walks and \texorpdfstring{$\Z^d$-}{Abelian }extensions}
\label{subsec:Extensions}

One of the motivations for this work is the study of hitting probabilities for $\Z^d$-extensions 
of dynamical systems. As these systems are generalisations of random walks, let us recall 
the problem for random walks.

\smallskip

Let $(X_n)_{n \geq 0}$ be a Markov chain on a countable state space $\Omega$, with irreducible transition kernel $P$ 
and invariant measure $\mu$. Assume that $\mu$ is recurrent, and let $\Psi \subset \Omega$ be finite. 
We are interested in the following question:

\emph{Starting from $\omega \in \Psi$, what is the next site in $\Psi$ that the Markov chain hits?}

The answer is a random variable with values in $\Psi$, whose distribution we would like to compute. Doing this 
for any $\omega \in \Psi$ is equivalent to computing $P_\Psi$.

\smallskip

Let $\Bcal (\Psi, \mu) := \C^\Psi$ and $\Bcal_0 (\Psi, \mu) := \{h \in \Bcal (\Psi, \mu: \ \int_\Psi h \dd \mu = 0 \}$. The matrix $P_\Psi$ 
acts on $V$. It preserves constant functions, and since $\mu_{|\Psi}$ is $P_\psi$-invariant, 
it preserves $V_0$. Hence we only need to understand the action of $P_\Psi$ on $V_0$. 
Irreducibility implies that $(I-P_\Psi)$ is invertible on $V_0$.

\smallskip

As noticed at the end of Subsection~\ref{subsec:MarkovToDynamique}, for any $g \in \Bcal_0 (\Psi, \mu)$, there 
exists a bounded function $f : \Omega \to \C$ such that $(I-P) (f) = g$. Hence, one can define a 
potential kernel $\Gamma : \Bcal_0 (\Psi, \mu) \to \Lbb^\infty (\Omega, \mu)$ such that:
\begin{equation*}
 (I-P) \Gamma (g) 
 = g.
\end{equation*}
Let us also define $\Gamma_\Psi := (I-P_\Psi)^{-1} : \Bcal_0 (\Psi, \mu) \to \Bcal_0 (\Psi, \mu)$. 
Then, by Proposition~\ref{prop:InvarianceMarkov}, for all $g$, $h \in V_0$,
\begin{equation*}
 \int_\Psi \Gamma (g) \cdot h \dd \mu 
 = \int_\Psi \Gamma_\Psi (g) \cdot h \dd \mu.
\end{equation*}
If $\Gamma$ is well understood, then one can recover information on $\Gamma_\Psi$, and from there on 
$P_\Psi = I-\Gamma_\Psi^{-1}$. For recurrent and irreducible random walks on $\Z^d$ ($d \in \{1,2\}$), 
the potential kernel $\Gamma$ can be computed or approximated very well using the Fourier transform~\cite{Spitzer:1976}.

\smallskip

Let $(\overline{A}, \overline{\mathm}, \overline{T})$ be an ergodic dynamical system preserving 
the probability measure $\overline{\mathm}$, and let $F : \overline{A} \to \Z^d$ be measurable. Define:
\begin{itemize}
 \item $A := \overline{A} \times \Z^d$;
 \item $T(x,p) := (\overline{T} (x), p+F(x))$ for $(x,p) \in A$;
 \item $\mathm := \overline{\mathm} \otimes \Leb_{\Z^d}$.
\end{itemize}
Then $(A, \mathm, T)$ is measure-preserving. To make discussions easier, for $\Psi \subset \Z^d$ 
we write $[\Psi] := \overline{A} \times \Psi$. Under the hypothesis that it is ergodic and recurrent, 
one may choose $\Psi \subset \Z^d$ finite, and ask:

\emph{Starting from $(x, p) \in [\Psi]$, what is the next site in $[\Psi]$ 
that the dynamical system hits?}

As with Markov chains, we thus would like to compute the matrix $P_\Psi$ whose entries are the probabilities that, 
starting from $([\{p\}], \overline{\mathm})$, the orbit next hits $[\{p'\}]$. 
One option to compute $P_\Psi$ is to use the transfer operator $\Lcal$. 
For nice enough systems, one can find a good Banach space $\Bcal \subset \Lbb^\infty ([\Psi], \mathm)$, 
with $\Bcal_0 = \{h \in \Bcal : \ \int_{[\Psi]} h \dd \mathm = 0\}$, and such that:
\begin{itemize}
 \item constant functions are in $\Bcal$;
 \item $\Bcal$ is dense in $\Lbb^2 ([\Psi], \mathm)$;
 \item $I-\Lcal_{[\Psi]}$ is invertible from $\Bcal_0$ to itself (with inverse $\Gamma_{[\Psi]}$).
\end{itemize}
Then $I-\Lcal$ is invertible from $\Bcal_0$ to $\Lbb^\infty (A, \mathm)$, by Lemma~\ref{lem:PoissonExtension}; 
denote by $\Gamma$ one of its inverses. It follows that, for all $f$ and $g$ in $\Bcal_0$:
\begin{equation*}
 \int_{[\Psi]} \Gamma (g) \cdot h \dd \mathm 
 = \int_{[\Psi]} \Gamma_{[\Psi]} (g) \cdot h \dd \mathm.
\end{equation*}
If $\Gamma$ is well understood, then one can recover $\Gamma_{[\Psi]}$, and from there 
$\Lcal_{[\Psi]}$. Under spectral assumptions, the potential kernel $\Gamma$ can again be computed 
or approximated very well using the Fourier transform (see e.g.~\cite[Proposition~1.6]{PeneThomine:2017} 
for a weaker variant).

\smallskip

Note that, if one is only interested in the transition probabilities, then one only needs 
to compute $\int_{[\Psi]} \Lcal_{[\Psi]} (g) \cdot h \dd \mathm$ 
for $g$, $h$ constant on each $[\{p\}]$. However, the 
set of functions constant on each $[\{p\}]$ is typically\footnote{Except in the 
case of true random walks, by the construction of Subsection~\ref{subsec:DynamiqueToMarkov}.} 
not stable under $\Lcal_{[\Psi]}$, which motivates the use of a larger Banach space.

\smallskip

Finally, let us comment a result on hitting probabilities:~\cite[Corollary~1.9]{PeneThomine:2017}. 
It was proved therein that, under technical assumptions and when $(\overline{A}, \overline{\mathm}, \overline{T})$ 
is Gibbs-Markov:
\begin{align*}
 \overline{\mathm} & \left(\text{trajectory starting from } [\{0\}] \text { hits } [\{p\}] \text{ before } [\{0\}] \right) \\
 & \hspace{2em} \sim_{p \to \infty} \sum_{n \geq 0} \left[ 2 \overline{\mathm} (S_n F = 0) - \overline{\mathm} (S_n F = p) - \overline{\mathm} (S_n F = -p)\right].
\end{align*}
This result can be seen as an asymptotic development of $P_{[\{0,p\}]}$ when $p$ goes to infinity. 
The proof uses induction on $[\{0, p\}]$, and then the invariance of the Green-Kubo formula, 
instead of the invariance of the solutions of the Poisson equation. The method of the proof was similar 
in spirit to that of Proposition~\ref{prop:InvarianceGKDistribution}, with additional difficulties 
due to the infinite measure setting.

\smallskip

When looking for a stochastic transition matrix on $n \geq 2$ sites with prescribed 
invariant measure, one has $(n-1)^2$ degrees of freedom. The invariance of the Green-Kubo formula 
gives access only to the symmetrized potential kernel $\Gamma_\Psi+\Gamma_\Psi^*$, 
which yields $n(n-1)/2$ independent constraints. These are enough to recover $\Gamma_\Psi$, 
and then $P_\Psi$, if and only if $n \leq 2$. This explains both the success of the approach in~\cite{PeneThomine:2017}, 
where $|\Psi| = 2$, and the necessity of more accurate tools if one wish to 
work with more than two sites.

\end{document}